\newtheoremstyle{mythm}{1.5ex plus 1ex minus .2ex}{1.5ex plus 1ex minus .2ex} {\rm}{\parindent}{\it\it}{\rm{:}}{1em}{}
\theoremstyle{mythm}
\newtheorem{thm}{Theorem}[section]
\newtheorem{cor}[thm]{Corollary}
\newtheorem{lem}[thm]{Lemma}
\theoremstyle{definition}
\newtheorem{defn}[thm]{Definition}
\newtheorem{example}[thm]{Example}
\theoremstyle{remark}
\newtheorem{rem}[thm]{Remark}
\numberwithin{equation}{section}
\begin{document}

\title{Heterogeneous Hegselmann-Krause Dynamics with Environment and Communication Noise}

 \author{Ge~Chen,~\IEEEmembership{Senior Member,~IEEE}, Wei Su,~Songyuan Ding,~Yiguang Hong,~\IEEEmembership{Fellow,~IEEE}
        % <-this % stops a space
\IEEEcompsocitemizethanks{\IEEEcompsocthanksitem
This research was supported by the National Natural Science Foundation of China under grants No. 11688101, 91427304, 61733018, 61803024, and 61573344, and the National Key Basic Research Program of China (973 program) under grant 2016YFB0800404, and the Fundamental Research Funds for the Central Universities under grant No. FRF-TP-17-087A1.
%\protect\\
\IEEEcompsocthanksitem
Ge Chen is with the National Center for Mathematics and Interdisciplinary Sciences \& Key Laboratory of Systems and
Control, Academy of Mathematics and Systems Science, Chinese Academy of Sciences, Beijing 100190,
China, {\tt
chenge@amss.ac.cn}
%\protect\\
\IEEEcompsocthanksitem
Wei Su is with School of Automation and Electrical Engineering, University of Science and Technology Beijing, Beijing 100083, China, {\tt
suwei@amss.ac.cn}
%\protect\\
\IEEEcompsocthanksitem
Songyuan Ding is with the Collaborative Innovation Centre of Chemistry for Energy Materials (iChEM), and Department of Chemistry, College of Chemistry and Chemical Engineering, Xiamen University, Xiamen 361005,  China, {\tt syding@xmu.edu.cn}
\IEEEcompsocthanksitem
Yiguang Hong is with the Key Laboratory of Systems and
Control, Academy of Mathematics and Systems Science, Chinese Academy of Sciences, Beijing 100190, {\tt yghong@iss.ac.cn}
}}

% ----------------------------------------------------------------
% ----------------------------------------------------------------
% Article Class (This is a LaTeX2e document)  ********************
% ----------------------------------------------------------------
% ----------------------------------------------------------------
\IEEEtitleabstractindextext{%
\begin{abstract}
The Hegselmann-Krause (HK) model is a well-known opinion dynamics, attracting a significant amount of interest from a number of fields. However, the heterogeneous HK model is difficult to analyze - even the most basic property of convergence is still open to prove. For the first time, this paper takes into consideration heterogeneous HK models with environment or communication noise. Under environment noise, it has been revealed that the heterogeneous HK model with or without global information has a phase transition for the upper limit of the maximum opinion difference, and has a critical noise amplitude depending on the minimal confidence threshold for quasi-synchronization. In addition, the convergence time to the quasi-synchronization is bounded by a negative exponential distribution. The heterogeneous HK model with global information and communication noise is also analyzed. Finally, for the basic HK model with communication noise, we show that the heterogeneous case exhibits a different behavior regarding quasi-synchronization from the homogenous case. Interestingly, raising the confidence thresholds of constituent agents may break quasi-synchronization. Our results reveal that the heterogeneity of individuals is harmful to synchronization, which may be the reason why the synchronization of opinions is hard to reach in reality, even within that of a small group.
\end{abstract}

\begin{IEEEkeywords}
Opinion dynamics, heterogeneous Hegselmann-Krause model, synchronization, noise, multi-agent systems
\end{IEEEkeywords}}

\maketitle

\IEEEdisplaynontitleabstractindextext
\IEEEpeerreviewmaketitle

% ----------------------------------------------------------------
\ifCLASSOPTIONcompsoc
\IEEEraisesectionheading{\section{Introduction}\label{intro}}
\else
\section{Introduction}\label{intro}
\fi

People present their opinions on certain events in which it is necessary for a group to reach shared decisions. Agreement (also known as consensus or synchronization) is one of the most important aspects of social group dynamics, making a position stronger and amplifying its impact on society. It is very natural for individuals to have different opinions on the same event, though it is rather complex to study the dynamics of how a group reaches an agreement. Opinion dynamics is a research field in which various tools are used to study the dynamical processes of the formation, diffusion, and evolution of public opinion. In fact, opinion dynamics has been an important issue of research in sociology and has also attracted a lot of attention in recent years from many other disciplines such as physics, mathematics, computer science, social psychology, and philosophy \cite{Lorenz2007,Sirbu2017}.

The study of opinion dynamics can be traced back to
the two-step flow of communication model studied by Lazarsfeld and Katz in the 1940-50s \cite{Katz1955}. This model posits that most people form their opinions under the influence of opinion leaders, who, in turn, are influenced by the mass media. Another famous early work on opinion dynamics is the social power model proposed by French \cite{French1956}.
 Based on a discussion and classification of ``social power", this model describes the diffusion of social influence and the formation of public opinions in social networks. The French model is a special case of the model proposed by DeGroot \cite{DeGroot1974}; as such, they are referred to as the ``French-DeGroot model" in some works  \cite{Proskurnikov2017}.
 Later, some new theories of opinion dynamics have been developed, namely the social
influence network theory \cite{Friedkin1998}, social impact theory \cite{Latane1981}, and dynamic social impact theory \cite{Latane1996}.
Recently, bounded confidence (BC) models of opinion dynamics has been of interest. The BC models adopt a mechanism where one individual is not willing to accept the opinion of another one if he/she feels their opinions have a big gap. One well-known BC model was formulated by Hegselmann and Krause \cite{hegselmann2002}, called the Hegselmann-Krause (HK) model, where all agents synchronously update their opinions by averaging the opinions in their confidence bounds. Another well-known BC model was proposed by
Deffuant \emph{et al.} \cite{Deffuant2000}, which is similar to the HK model, though it instead employs a pairwise-sequential updating procedure in place of the synchronized one within the HK model. For opinion dynamics research, one core issue is whether and how agreement can be reached.

\paragraph*{Motivations}
Among a wide variety of opinion dynamics models, the HK model is a particularly interesting one that has attracted a lot of attention in recent years. Because the inter-agent topology of the HK model is time-varying and determined by the agents' states, whereas the agents' states depend on the topology, the theoretical analysis of the HK model is difficult. The current analysis of the HK model focuses on the most basic property of convergence. For the homogeneous HK model - where all agents have the same confidence bound - the convergence and convergence rate have been well studied \cite{Blondel2009,Blondel2010,Touri2011,Chazelle2011,Bhattacharyya2013}.
%Blondel \emph{et al.} proved its convergence for both discrete-time and continuous-time versions\cite{Blondel2009,Blondel2010}.
%Later, the convergence rate of the homogeneous HK model has been investigated\cite{Touri2011,Chazelle2011,Bhattacharyya2013}.
Also, there exists some theoretical research on varieties of the homogeneous HK model, such as the systems with decaying confidence \cite{Morarescu2011},
with distance-dependent interaction weight \cite{Motsch2014}, or with continuous agents \cite{Wedin2015}.
For the heterogeneous HK model - where the confidence bounds of the agents can be different -Su \emph{et al.} \cite{Su2017}
prove that partial agents in the system  will reach static states after a finite time, however cannot prove the convergence of other agents. Besides, the opinions of all agents are shown to be convergent if each agent maintains communication with others during a long enough period of time \cite{Etesami2015}, or if the confidence bound of each agent is either $0$ or $1$  \cite{Chazelle2016}. However, the convergence of the general heterogeneous HK model without additional conditions is still an open problem (Conjecture 2.1 in \cite{Mirtabatabaei2012}), despite it having been supported by simulations \cite{Lorenz2010}. The analysis of the heterogeneous HK model is particularly important since there are always differences between individuals, contributing to one motivation of this paper.

Another motivation of this paper is to study the collective behavior of the HK model affected by noise. There is a consensus that all natural systems are inextricably affected by noise \cite{Sagues2007}.  Actually, how noise affects the collective behavior of a complex system has garnered considerable interest from researchers and developers in differing fields. Generally, the noise in engineering systems may break their ordered structures, in which case one wishes to reduce the effect of the noise. However, in many natural and social systems, the noise may drive the systems to produce ordered structure \cite{Sagues2007,Shinbrot2001}.
As a matter of fact, the actual opinions of individuals are inevitably influenced by the randomness during opinion transmission and evolution, which could be attributable to the many exogenous
factors like T.V., blogs, and newspapers, or the communication between individuals. Thus, it has been recognized by several studies that randomness is an essential factor for the investigation of opinion dynamics in reality
\cite{Mas2010,Pineda2011,Grauwin2012,Carro2013, Pineda2013,Su2017b,Wang2015}.
In many studies, an interesting phenomenon was found where the noise in some situations could play a positive role in enhancing the synchronization or reducing the disagreement of opinions. Yet almost all of these findings were based on simulations, and the theoretical analysis is limited.
 In our previous paper, a homogeneous HK model with environment noise was studied \cite{Su2017b},
 but the analysis method cannot be applied to the heterogeneous case. This paper will analyze the heterogeneous HK model with environment noise by means of a completely different method. Also, to be more practical, this paper considers different types of heterogeneous HK models that may be affected by communication noise and global information. The communication noise is caused by individuals potentially not expressing their own opinion or not accurately understanding the opinions of others, while the global information denotes the background opinion modeled by the average opinion of all individuals.

\paragraph*{Contributions}
Based on the above motivations, for the first time, this paper analyzes heterogeneous HK models with or without global information, and with environment or communication noise. We show in detail that, under environment noise, the HK model with or without global information has a phase transition for the upper limit of the maximum opinion difference, as well as a critical noise amplitude for quasi-synchronization. The critical noise amplitude only depends on the minimal confidence threshold among all individuals. Also, it is shown that the convergence time to quasi-synchronization is bounded by a negative exponential distribution.

For the HK model with small communication noise, quasi-synchronization can be still reached if it contains global information. However, if it does not contain global information, the heterogeneous model exhibits a different behavior regarding quasi-synchronization from the homogenous model. Interestingly, raising the confidence thresholds of constituent agents may break quasi-synchronization. The above results reveal that the heterogeneity is harmful to synchronization, which may be the reason why it is challenging to reach the synchronization of opinions in reality, even within a small group. It is worth noting that the heterogeneous HK model without noise is hard to analyze. Even so, we provide some exact properties for when noise is considered.

\paragraph*{Organization}
 Section \ref{Sec_Def} gives the
preliminaries and then formulates our problem, while Sections \ref{Analysis_sec_1} and \ref{Analysis_sec_3}
present our main results with strict analysis.   Finally, Section \ref{Conclusions} concludes this paper.

\section{Models and Definitions}\label{Sec_Def}
\renewcommand{\thesection}{\arabic{section}}

The original HK model assumes that there are $n (n\geq 3)$ individuals or agents in a group. Each individual $i$ has a time-varying opinion $x_i(t)\in [0, 1]$, and can only communicate opinions with his/her friends, which is defined by where the difference of opinions is not bigger than a confidence threshold $r_i\in (0,1]$. This mechanism is based on a practical phenomenon where one individual is not willing to accept the opinion of another if he/she feels their opinions have a large gap. Let
\begin{equation*}\label{neigh}
 \mathcal{N}_i(t)=\{1\leq j\leq n\; \big|\; |x_j(t)-x_i(t)|\leq r_i\}
\end{equation*}
denote the neighbor set of agent $i$ at time $t$. Here we note that an individual's neighbor set contains himself/herself.
The evolution of opinions of the HK model accords to the following dynamics \cite{hegselmann2002}:
\begin{equation}\label{HKnoiseless}
  x_i(t+1)=|\mathcal{N}_i(t)|^{-1}\sum_{j\in \mathcal{N}_i(t)}x_j(t), ~i=1, \ldots, n, t\geq 0,
\end{equation}
where $|S|$ denoting the cardinal number of a set $S$.

If $r_1=\cdots=r_n$, then the system (\ref{HKnoiseless}) is called the homogeneous HK model, otherwise it is referred to as the heterogeneous HK model. The HK model is a typical self-organized system that has attracted a significant amount of interest, but has shown difficult to analyze. Currently, the analysis of the HK model focuses on the homogeneous case, while the analysis of the heterogeneous case is almost lacking.

The original HK model does not consider the effect of noise. However, all actual systems are inextricably affected by noise. To be more practical, this paper will consider the HK model affected by either environment noise or communication noise.

%the HK model (\ref{HKnoiseless}) captures a common evolution mechanism of many real systems and
%it is able to

\subsection{Heterogeneous HK Models with Environment Noise}

The dynamics of opinions in real societies is also affected by many exogenous factors such as T.V., blogs, newspapers, and so on \cite{Pineda2015}. Some HK-type systems under the effect of exogenous factors have been considered in recent years \cite{Pineda2015,SChen2016,XChen2017}. This paper considers exogenous factors as environment noises. Following previous work in noisy opinion dynamics \cite{Mas2010, Pineda2013,Su2017b}, we confine the values of an individual's opinion to the interval $[0,1]$. Let $\Pi_{[0,1]}(\cdot)$ denote the projection onto the interval $[0,1]$, i.e., for any $x\in\mathds{R}$,
\begin{eqnarray*}
\Pi_{[0,1]}(x)=\left\{%
\begin{array}{ll}
1 & \mbox{if}~x>1\\
x & \mbox{if}~x\in[0,1]\\
0 & \mbox{otherwise}
\end{array}%
\right..
\end{eqnarray*}
Consider a basic HK model with environment noise as follows:
Denote $\mathcal{V}=\{1,2,\ldots,n\}$ as the set of all agents with $n\geq 3$.
For all $i\in\mathcal{V}$ and  $t\geq 0$, let
\begin{equation}\label{m1}
  x_i(t+1)=\Pi_{[0,1]}\Big( |\mathcal{N}_i(t)|^{-1}\sum\limits_{j\in \mathcal{N}_i(t)}x_j(t)+\xi_i(t)\Big),
\end{equation}
where $\xi_i(t)\in [-\eta,\eta]$ is a bounded noise with $\eta>0$ being a constant.

A natural consideration is that all agents may be affected in reality by the background opinion. Accordingly, an interesting problem is how the background opinion affects the collective behavior of the opinions of agents. For simplicity, this paper models the background opinion as the average opinion $x_{\rm{ave}}(t):=\frac{1}{n}\sum_{i=1}^n x_i(t)$, and each agent $i$ has a belief factor $\omega_i\in(0,1)$ in the global information $x_{\rm{ave}}(t)$.
The HK model with global information and environment noise is formulated as
\begin{eqnarray}\label{m3}
&&x_i(t+1)\\
  &&=\Pi_{[0,1]}\Big( \omega_i x_{\rm{ave}}(t)+ \frac{1-\omega_i}{|\mathcal{N}_i(t)|}\sum\limits_{j\in \mathcal{N}_i(t)}x_j(t)+\xi_i(t)\Big).\nonumber
\end{eqnarray}

Let $x(t):=(x_1(t),\ldots,x_n(t))$.
To be more practical we consider a wide class of noises which contain not only independent noises but also correlated noises.
For systems (\ref{m1}) and (\ref{m3}),
let $\Omega^t=\Omega_n^t\subseteq \mathds{R}^{n\times(t+1)}$ be the sample space of $(\xi_i(t'))_{0\leq t' \leq t, i \in \mathcal{V}}$,
and $\mathcal{F}^t=\mathcal{F}_n^t$ be its Borel $\sigma$-algebra. Additionally we define $\Omega^{-1}$ be the empty set.
Let $P=P_n$ be the probability measure on $\mathcal{F}^{\infty}$ for $(\xi_i(t'))_{t' \geq 0,i\in\mathcal{V}}$, so  the
 probability space is written as $( \Omega^{\infty},\mathcal{F}^{\infty},P)$.
 We assume that the noises $\{\xi_i(t)\}$ satisfy the following assumption.

\textbf{(A1)} For any $t\geq 0$ and any states $x(0),\ldots,x(t)\in[0,1]^n$, the joint probability density of $(\xi_1(t),\ldots,\xi_n(t))$ has a positive lower bound, i.e., there exists a constant $\underline{\rho}>0$ such that
%\begin{eqnarray}\label{noise_cond_1}
%P\left(\bigcap_{i=1}^n \left\{ \xi_i(0)\in [a_i,b_i]\right\}\right)  \geq  \underline{\rho} \prod_{i=1}^n (b_i-a_i),
%\end{eqnarray}
%and
\begin{eqnarray*}\label{noise_cond_2}
\begin{aligned}
&P\left(\bigcap_{i=1}^n \left\{ \xi_i(t)\in [a_i,b_i]\right\}| \forall x(0),\ldots,x(t)\in[0,1]^n \right)\\
 &\geq  \underline{\rho} \prod_{i=1}^n (b_i-a_i)
\end{aligned}
\end{eqnarray*}
for any $t\geq 0$ and real numbers $a_i$ and $b_i$ satisfying $-\eta\leq a_i < b_i \leq \eta$.

The positive lower bound in (A1) simply means that for any $t\geq 0$, all individuals are affected by noise, and the noise has a positive probability density over $[-\eta,\eta]$. It is easy to see that if $\xi_i(t)$ is uniformly and independently distributed in $[-\eta, \eta]$, then it satisfies (A1). Some other bounded noises also satisfy (A1), such as the truncated Gaussian noise \cite{Bjorsell2007}, as well as the discrete time version of frequency fluctuations generated by sinusoidal functions and Wiener processes \cite{CaiWu2004}.

\subsection{Heterogeneous HK Models with Communication Noise}

In reality, communication between individuals may be subject to noise because individuals may not express their own opinion or accurately understand the opinions of others. The heterogeneous HK dynamics with communication noise can be formulated as
\begin{eqnarray}\label{m4}
x_i(t+1)=\Pi_{[0,1]}\Big(|\mathcal{N}_i(t)|^{-1}\sum\limits_{j\in \mathcal{N}_i(t)}[x_j(t)+\zeta_{ji}(t)] \Big)
\end{eqnarray}
for all $i\in\mathcal{V}$ and $t\geq 0$,
where $\zeta_{ji}(t)\in[-\eta,\eta]$ denotes the communication noise from agent $j$ to $i$ at time $t$, with $\zeta_{ii}(t)\equiv 0$.

Similar to (\ref{m3}) we also consider an HK model with global information and communication noise as follows:
\begin{eqnarray}\label{m6}
  &&x_i(t+1)=\Pi_{[0,1]}\Big( \omega_i x_{\rm{ave}}(t)+(1-\omega_i)|\mathcal{N}_i(t)|^{-1}\nonumber\\
  &&~~~~\cdot\sum\limits_{j\in \mathcal{N}_i(t)}[x_j(t)+\zeta_{ji}(t)]\Big),~~i\in\mathcal{V}, t\geq 0.
\end{eqnarray}

%Next we define the probability space for systems (\ref{m4}) and (\ref{m6}).
%Set
%$\Omega^t=\Omega_n^t\subseteq [-\eta,\eta]^{n\times(t+1)}$ to be the sample space of $(\xi_i(t'))_{i\in\mathcal{V}, 0\leq t' \leq t}$,
%and let $\mathcal{F}^t=\mathcal{F}_n^t$ be the Borel $\sigma$-algebra of $\Omega^t$. Additively we define $\Omega^{-1}$ be the empty set.
%Let $P=P_n$ be the probability measure on $\mathcal{F}^{\infty}$ for $(\xi_i(t'))_{i\in\mathcal{V}, t' \geq 0}$, so  the
% probability space is written as $( \Omega^{\infty},\mathcal{F}^{\infty},P)$.

For systems (\ref{m4}) and (\ref{m6}),
let $\Omega^t=\Omega_n^t$ be the sample space of $(\zeta_{ji}(t'))_{0\leq t' \leq t, i \in \mathcal{V}, j\in\mathcal{N}_i(t')}$,
and $\mathcal{F}^t=\mathcal{F}_n^t$ be its Borel $\sigma$-algebra. Additionally we define $\Omega^{-1}$ as the empty set.
Let $P=P_n$ be the probability measure on $\mathcal{F}^{\infty}$ for $(\zeta_{ji}(t'))_{t' \geq 0,i\in\mathcal{V},j\in\mathcal{N}_i(t')}$, so  the
 probability space is written as $( \Omega^{\infty},\mathcal{F}^{\infty},P)$. Define the set of agent pairs  $\mathcal{E}(t)$ by $\mathcal{E}(t):=\{(i,j): i\in\mathcal{V},j\in \mathcal{N}_i(t)\backslash\{i\}\}$.
Similar to (A1), we assume the noises $\{\zeta_{ji}(t)\}$ satisfying the following assumption.

\textbf{(A2)} For any $t\geq 0$ and any states $x(0),\ldots,x(t)\in[0,1]^n$,  if $\mathcal{E}(t)$ is not empty, then,
 the joint probability density of $\{\zeta_{ji}(t)\}_{(i,j)\in\mathcal{E}(t)}$ has a positive lower bound, i.e., there exists a constant $\underline{\rho}>0$ such that
%\begin{eqnarray}\label{noise_cond_1}
%P\left(\bigcap_{i=1}^n \left\{ \xi_i(0)\in [a_i,b_i]\right\}\right)  \geq  \underline{\rho} \prod_{i=1}^n (b_i-a_i),
%\end{eqnarray}
%and
\begin{eqnarray*}\label{noise_cond_2}
\begin{aligned}
&P\Big(\bigcap_{(i,j)\in\mathcal{E}(t)} \big\{ \zeta_{ji}(t)\in [a_{i}^j,b_{i}^j]\big\}| \forall x(0),\ldots,x(t)\Big)\\
 &\geq  \underline{\rho}\prod_{(i,j)\in\mathcal{E}(t)} (b_i^j-a_i^j)
\end{aligned}
\end{eqnarray*}
for any $t\geq 0$ and real numbers $a_i^j$ and $b_i^j$ satisfying $-\eta\leq a_i^j< b_i^j \leq \eta$.

%Furthermore, if $|X|\leq \delta$ and $EX^2\geq c\delta^2$ for $\delta>0\,c\in(0,1]$, then $a$ can be taken $c\delta/2$ and $p$ satisfies $p\geq \frac{4c-c^2}{4-c^2}$.

%\begin{proof}
%If there does not exist constants $a>0$ and $0<p\leq 1$ such that $P\{X\geq a\}\geq p$, for the first inequality to hold, then
%$P\{X > 0\}=0$. By $EX=0$, one can get $P\{X<0\}=0$, contradicting with $EX^2>0$.
%Thus, the first inequality holds, and similarly for the second where a common $a$ and $p$ can be chosen.

%Furthermore, if $|X|\leq \delta$ and $EX^2\geq c\delta^2$, let $a=\frac{c\delta}{2}$ and suppose $P\{|X|\geq a\}=p$, then
%\begin{equation*}
%\begin{split}
%  c\delta^2\leq EX^2 & \leq \int_{|X|< a}X^2dP+\int_{a\leq|X|\leq \delta}X^2dP \\
%    & \leq a^2(1-2p)+2\delta^2p=\frac{c^2\delta^2}{4}(1-2p)
%\end{split}
%\end{equation*}
%\end{proof}

\subsection{Definitions}

As we know, the conventional consensus/synchronization concept signifies that the states of all agents are exactly the same, and this concept has been well studied in noise-free opinion dynamics and multi-agent systems. However, if considering a system affected by noise, the strict consensus/synchronization behavior may not be reached.

Define  $$d_{\mathcal{V}}(t):=\max\limits_{i, j\in \mathcal{V}}|x_i(t)-x_j(t)|$$ to be the maximum opinion difference at time $t$.
Let
\begin{eqnarray*}
\underline{d}_{\mathcal{V}}:=\liminf\limits_{t\rightarrow \infty}d_{\mathcal{V}}(t)~~ \mbox{and}~~\overline{d}_{\mathcal{V}}:=\limsup\limits_{t\rightarrow \infty}d_{\mathcal{V}}(t)
\end{eqnarray*}
 denote the lower limit and upper limit of the maximum opinion difference respectively.

 Similar to
\cite{Su2017}, we relax the concept of synchronization to \emph{quasi-synchronization} which is defined as follows:
\begin{defn}\label{robconsen}
We say that \emph{quasi-synchronization} is asymptotically reached if $\overline{d}_{\mathcal{V}} \leq \min_{1\leq i\leq n}r_i$.
\end{defn}

From Definition \ref{robconsen}, if quasi-synchronization is reached, then any two agents can communicate directly in the limit state. In fact, in Theorems \ref{theorem_1}, \ref{theorem_2}, \ref{theorem_2c}, and \ref{theorem_1c} below, wherever a bound $\bar{d}_{\mathcal{V}} \leq  \min r_i$ exists, almost surely there is some finite time $t_0$ such that $d_{\mathcal{V}} \leq  \min r_i$ for all $t\geq t_0$. In other words, almost surely agents communicate directly from some time onwards.

\renewcommand{\thesection}{\Roman{section}}
\section{Main Results for Systems (\ref{m1}), (\ref{m3}), and (\ref{m6})}\label{Analysis_sec_1}
\renewcommand{\thesection}{\arabic{section}}

In this section we will analyze systems (\ref{m1}) and (\ref{m3}) which exhibit a phase transition for quasi-synchronization behavior
depending on the noise amplitude $\eta$. Let $r_{\min}=\min_{1\leq i\leq n} r_i $ and $r_{\max}=\max_{1\leq i\leq n} r_i$.
Also, for any $\alpha>0$ we set
\begin{equation*}
  c_{\alpha}^1=c_{\alpha}^1(\eta):=\left\{
           \begin{array}{ll}
            1  &  \mbox{if~} \eta> \max\{ \frac{r_{\min}}{2}, \frac{r_{\max}}{n}\}\\
            2\eta-\alpha  & \mbox{otherwise} \\
           \end{array}
         \right..
\end{equation*}
With the above definitions we first give our main result for system (\ref{m1}) as follows:
\begin{thm}[Phase transition and switching interval in the heterogeneous HK model with environment noise]\label{theorem_1}
Consider system (\ref{m1}) satisfying (A1) and $r_{\min}<1$. Then for any initial state $x(0)\in [0,1]^n$,
almost surely (a.s.)
\begin{equation}\label{theo1_00}
  \overline{d}_{\mathcal{V}}\left\{
           \begin{array}{ll}
            =2\eta  & \mbox{if~} \eta\leq \frac{r_{\min}}{2} \\
            \geq  \min\{2\eta,1\}  & \mbox{if~} \eta> \frac{r_{\min}}{2} \\
            =1  &  \mbox{if~} \eta> \max\{ \frac{r_{\min}}{2}, \frac{r_{\max}}{n}\}
           \end{array}
         \right..
\end{equation}
Also, for any constant $\alpha>0$, if we set $\tau_0=0$, and $\tau_k$ to be the stopping time as
 \begin{eqnarray}\label{theo1_00_a}
\tau_k=\left\{%
\begin{array}{ll}
\min\{s>\tau_{k-1}:d_{\mathcal{V}}(s)\leq \alpha\} & \mbox{if $k$ is odd}\\
\min\{s>\tau_{k-1}:d_{\mathcal{V}}(s)\geq c_\alpha^1\} & \mbox{if $k$ is  even}
\end{array}%
\right.,
\end{eqnarray}
 then for all $k\geq 0$ and $t\geq 0$ we have
\begin{eqnarray}\label{theo1_01}
\begin{aligned}
P\left(\tau_{k+1}-\tau_{k}> t \right) \leq (1-\lambda_1)^{\lfloor t/\Lambda_1\rfloor},
\end{aligned}
\end{eqnarray}
where $\lambda_1\in (0,1)$ and $\Lambda_1>0$ are constants  only depending on  $n, \eta, \underline{\rho}, \alpha$, and $r_i$, $i\in\mathcal{V}$.
\end{thm}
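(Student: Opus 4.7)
The plan is to prove the tail bound (\ref{theo1_01}) first, then derive the phase transition (\ref{theo1_00}) from it by taking $\alpha\downarrow 0$. The tail bound itself will follow from two probabilistic transition lemmas that exploit the density lower bound (A1) to prescribe specific noise realizations steering $d_{\mathcal{V}}$ in a desired direction. The first is a \emph{contraction lemma}: from any state $x(t)\in[0,1]^n$, the system reaches one with $d_{\mathcal{V}}\leq\alpha$ within some bounded time $T_c$ with probability at least $p_c>0$, where both constants depend only on $n,\eta,\underline{\rho},\alpha,\{r_i\}$. The second is an \emph{expansion lemma}: from any state with $d_{\mathcal{V}}\leq\alpha$, the system reaches one with $d_{\mathcal{V}}\geq c_\alpha^1$ within some bounded time $T_e$ with probability at least $p_e>0$. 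Once both are established, setting $\Lambda_1=\max(T_c,T_e)$ and $\lambda_1=\min(p_c,p_e)$ and partitioning the interval $[\tau_k,\tau_k+t]$ into $\lfloor t/\Lambda_1\rfloor$ disjoint blocks, the geometric bound (\ref{theo1_01}) follows from the strong Markov property at $\tau_k$ together with the state-uniform density provided by (A1).

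For the contraction lemma, I would prescribe, at each of $T_c=O(\log(1/\alpha)/\eta)$ steps, noises lying in a positive-volume box chosen so that the smallest opinion moves up by roughly $\eta$ and the largest moves down by roughly $\eta$ (subject to the projection), driving the whole cluster toward a common interior target $v\in[\eta,1-\eta]$ (or any target if $\eta\geq 1/2$). Each step's box has probability at least $\underline{\rho}\cdot(\text{volume})$ by (A1), and the product over $T_c$ steps gives the desired $p_c>0$. For the expansion lemma in the regime where $c_\alpha^1=2\eta-\alpha$, a single step suffices when $\alpha\leq r_{\min}$ makes all agents mutual neighbors: prescribing $\xi_i\approx\eta$ and $\xi_j\approx-\eta$ for a chosen pair $i,j$, with the common neighbor-mean bounded away from $0$ and $1$ by the contraction target, forces $|x_i(t+1)-x_j(t+1)|$ close to $2\eta$ via non-expansiveness of $\Pi_{[0,1]}$. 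The phase transition (\ref{theo1_00}) then follows by letting $\alpha\downarrow 0$: (\ref{theo1_01}) implies every $\tau_k$ is a.s.\ finite, so the "large-$d$" event occurs infinitely often and $\overline{d}_{\mathcal{V}}\geq c_0^1$; in the regime $\eta\leq r_{\min}/2$ the matching upper bound $\overline{d}_{\mathcal{V}}\leq 2\eta$ follows because once $d_{\mathcal{V}}\leq r_{\min}$ all agents are mutual neighbors, so $x_i(t+1)-x_j(t+1)=\Pi_{[0,1]}(m+\xi_i)-\Pi_{[0,1]}(m+\xi_j)$ for a common neighbor-mean $m$, which is bounded in absolute value by $|\xi_i-\xi_j|\leq 2\eta$.

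The hard part will be the expansion lemma when $c_\alpha^1=1$, i.e., in the regime $\eta>\max\{r_{\min}/2,r_{\max}/n\}$, where the noise must drive the diameter all the way to $1$. The threshold $\eta>r_{\max}/n$ is sharp because a single outlier agent displaced by $\eta$ from an $n$-agent cluster is pulled back toward the centroid by the averaging step by an amount proportional to $\eta(n-1)/n$, so the effective per-step ``restoring force'' felt by an extreme opinion scales as $r_{\max}/n$; only when the noise magnitude $\eta$ strictly exceeds this can a displacement accumulate across steps rather than being damped. I expect the proof to exhibit a specific $O(1/\eta)$-step noise schedule that progressively splits the agents into two subsets drifting toward $0$ and $1$ respectively, using at each step that the prescribed noise exceeds the restoring force; eventually the two subsets separate by more than $r_{\max}$, at which point they decouple and each can independently be pushed the remaining distance to its boundary by further noise. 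Verifying that this prescribed trajectory remains compatible with the evolving neighbor graph at every step, and that the associated noise box has volume bounded below by a constant depending only on $n,\eta,\underline{\rho},\alpha,\{r_i\}$, is the combinatorial heart of the argument and will determine the explicit constants $\lambda_1$ and $\Lambda_1$.
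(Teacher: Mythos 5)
Your overall architecture is the same as the paper's: a contraction step (drive all opinions into a small ball around a chosen target), an expansion step (from a near-consensus state, prescribe extreme noises for a pair of agents to open a gap of $\approx 2\eta$, or of $1$ in the supercritical regime), a geometric tail bound obtained by iterating a state-uniform positive-probability transition over blocks of fixed length using (A1), and the matching upper bound $\overline{d}_{\mathcal{V}}\leq 2\eta$ for $\eta\leq r_{\min}/2$ from the observation that a diameter below $r_{\min}$ makes the graph complete, equalizes all the local means $\widetilde{x}_i(t)$, and hence caps the next diameter at $2\eta$ forever. The paper packages the "prescribed noise box" device as finite-time robust reachability of an auxiliary control system (Lemma \ref{robust} and Lemmas \ref{lem1}, \ref{lem3}), but that is only a bookkeeping difference from your direct formulation. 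One small slip: the contraction time is $O(1/\eta)$ (linear drift at rate $\eta-2\alpha'$ per step), not $O(\log(1/\alpha)/\eta)$; this does not affect anything.

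The genuine gap is in the case $c_\alpha^1=1$, and your sketch there would fail as stated. A "progressive split into two subsets drifting toward $0$ and $1$" cannot accumulate displacement across steps: as long as every agent sees every other agent, all the averages $\widetilde{x}_i(t)$ coincide, so the diameter resets to at most $2\eta$ at \emph{every} step no matter what noise schedule you prescribe -- there is no gradual build-up to fight a "restoring force." The separation must instead be achieved in a single step by breaking the graph, and this is exactly where the two thresholds enter, in a different way than your heuristic suggests. The paper's Lemma \ref{lem2} first parks the whole cluster near $r_{\min}/2$ (note: near the boundary, not in the middle), then in one step pushes the minimal-radius agent up by $\approx\eta>r_{\min}/2$ so that it lands above $r_{\min}=r_1$ and becomes \emph{isolated from its own point of view}, while pushing the other $n-1$ agents down onto $0$ via the projection. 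The condition $\eta>r_{\max}/n$ then controls the agents left behind, not the outlier: an agent $i$ at $0$ still sees the escapee only while the escapee is at position $\leq r_i$, so its neighbor-average is at most $r_i/n\leq r_{\max}/n$, and noise $-\eta$ pins it back to $0$; meanwhile the isolated escapee free-drifts to $1$ at speed $\approx\eta$. Your attribution of the $r_{\max}/n$ threshold to a restoring force on the extreme opinion is the wrong mechanism (an outlier that still sees the cluster is pulled back by a fraction $(n-1)/n$ of its displacement, which would be fatal -- hence the need to isolate it outright), and the one-versus-rest split keyed to $r_{\min}$ is essential, since only the smallest confidence radius can be exceeded by a single $\eta$-jump from $r_{\min}/2$.
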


%{\color{blue}
%out that $\frac{r_{\max}}{n}\rightarrow 0$ as $n\rightarrow\infty$, which has some nice social
%interpretations:
%large network size behaviour can be quite different to small network size behaviour.}

The inequality (\ref{theo1_01}) denotes that $d_{\mathcal{V}}(t)$ will switch between $(0,\alpha]$ and $[c_\alpha^1,1]$ infinitely often, and the switching interval is a random variable depending on $n, \eta, \underline{\rho}, \alpha$, and $r_i$, $i\in\mathcal{V}$. However, the specific dependencies are very non-linear, and difficult to describe even through simulations.

\begin{rem}
From (\ref{theo1_00}), the upper limit of the maximum opinion difference $\overline{d}_{\mathcal{V}}$ has a phase transition at the point $\eta=r_{\min}/2$, providing $r_{\max}\leq \frac{n r_{\min}}{2}$. This result implies that the maximum opinion difference depends on the minimal confidence threshold among all individuals. Thus, by comparing the homogeneous and heterogeneous cases of system (\ref{m1}) with the same average confidence bound, the heterogeneity of individuals is harmful to synchronization, which may be the reason why the synchronization of opinions is hard to reach in reality, and even within that of a small group.
\end{rem}

We also provide some simulations for Theorem \ref{theorem_1}. Consider the system (\ref{m1}) with $n$ agents whose initial opinions are all set to be $0.5$. For the confidence bounds of the agents,  we set $r_1=r_{\min}=0.05$ and $r_n=r_{\max}=0.45$, and choose $\{r_i\}_{i=2}^{n-1}$ randomly and uniformly from $[0.05,0.45]$. Suppose the noises $\{\xi_i(t)\}$ are independently and uniformly distributed in $[-\eta,\eta]$. All simulations run up to $10^6$ steps. We first choose $n=20$ and $\eta=0.025=r_{\min}/2$, and the value of $d_{\mathcal{V}}(t)$ is shown in Figure \ref{Fig1n}. In this figure, it can be observed that $\overline{d}_{\mathcal{V}}=0.05=2\eta=r_{\min}$, which is consistent with (\ref{theo1_00}) and the system (\ref{m1}) reaches quasi-synchronization. Second we choose $n=20$ and $\eta=0.1>\max\{ \frac{r_{\min}}{2}, \frac{r_{\max}}{n}\}$, and the value of $d_{\mathcal{V}}(t)$ is presented in Figure \ref{Fig2n}. In this figure it can be seen that $\overline{d}_{\mathcal{V}}=1$, which is also consistent with (\ref{theo1_00}). Finally we consider a small group with $n=4$ and $\eta=0.1\in (\frac{r_{\min}}{2}, \frac{r_{\max}}{n})$, and the value of $d_{\mathcal{V}}(t)$ is provided in Figure \ref{Fig3n}. Differing from Figure \ref{Fig2n}, it seems that $\overline{d}_{\mathcal{V}}<1$, so the behavior of the system with a small number of agents is quite different from a large number of agents.

\begin{figure}[htbp]
\begin{center}
\includegraphics[height=1.8in,width=3.3in]{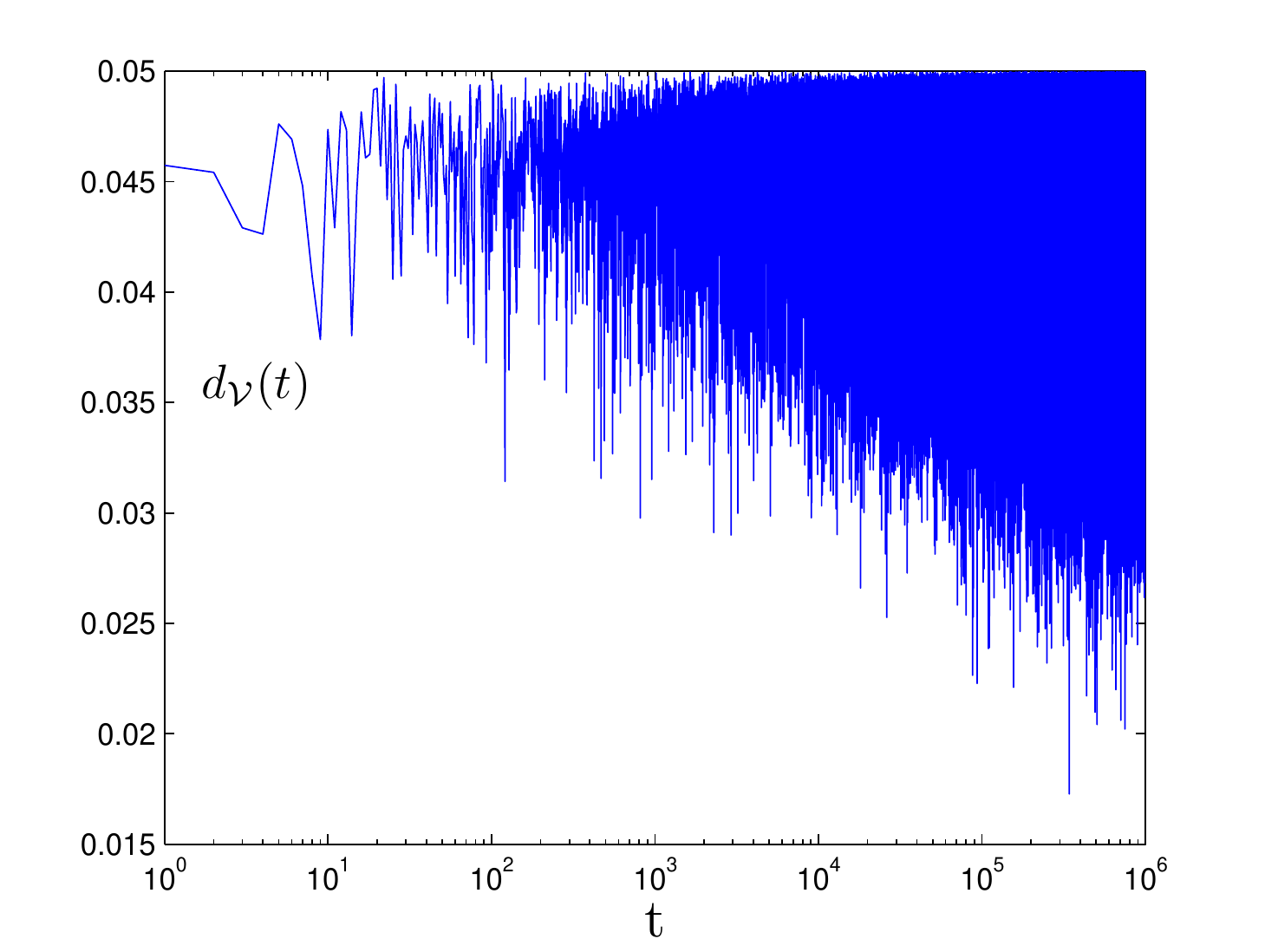}
\caption{The value of $d_{\mathcal{V}}(t)$ under system (\ref{m1}) with $n=20$ and $\eta=0.025$.} \label{Fig1n}
\includegraphics[height=1.8in,width=3.3in]{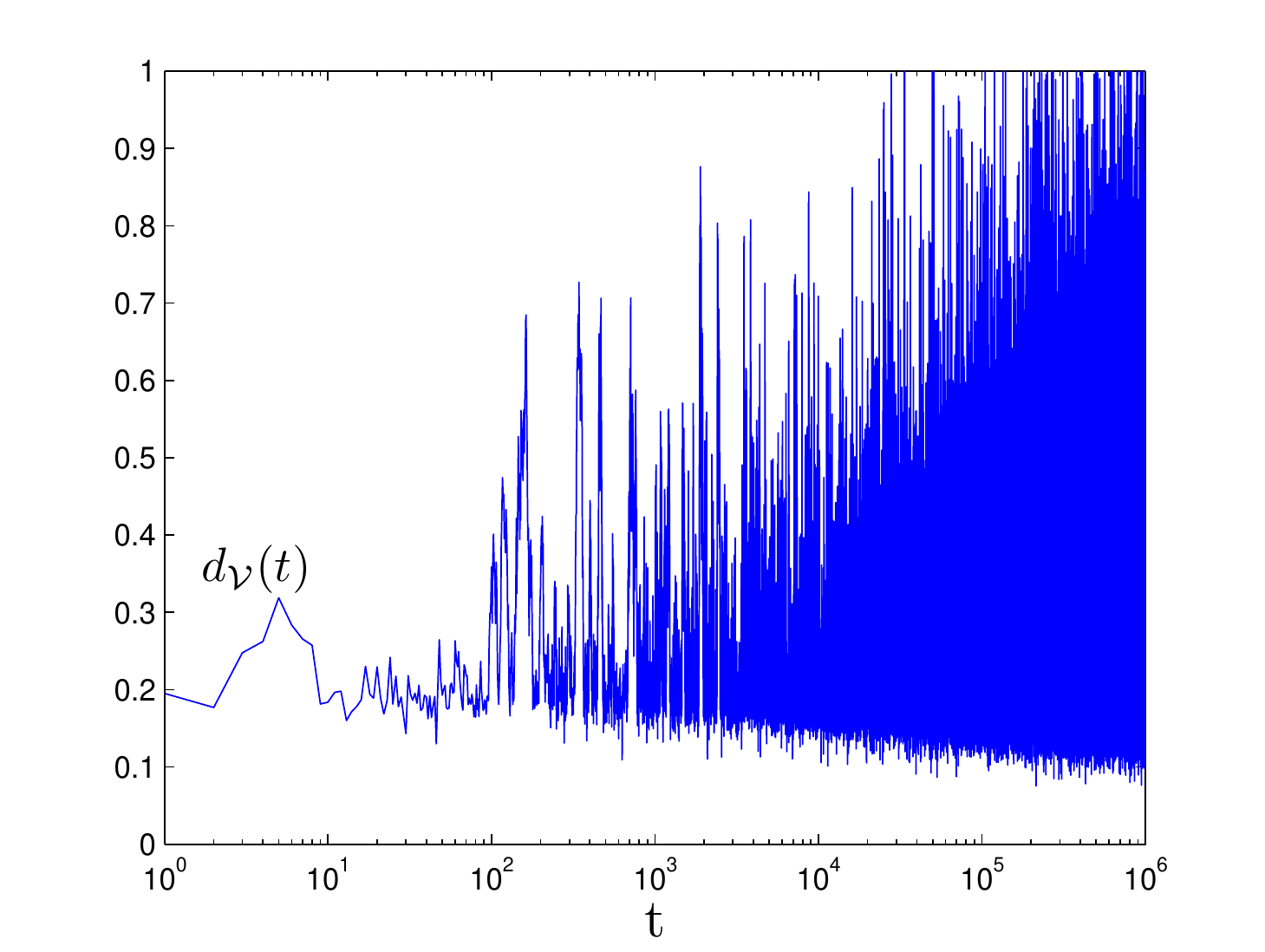}
\caption{The value of $d_{\mathcal{V}}(t)$ under system (\ref{m1}) with $n=20$ and $\eta=0.1$.} \label{Fig2n}
\includegraphics[height=1.8in,width=3.3in]{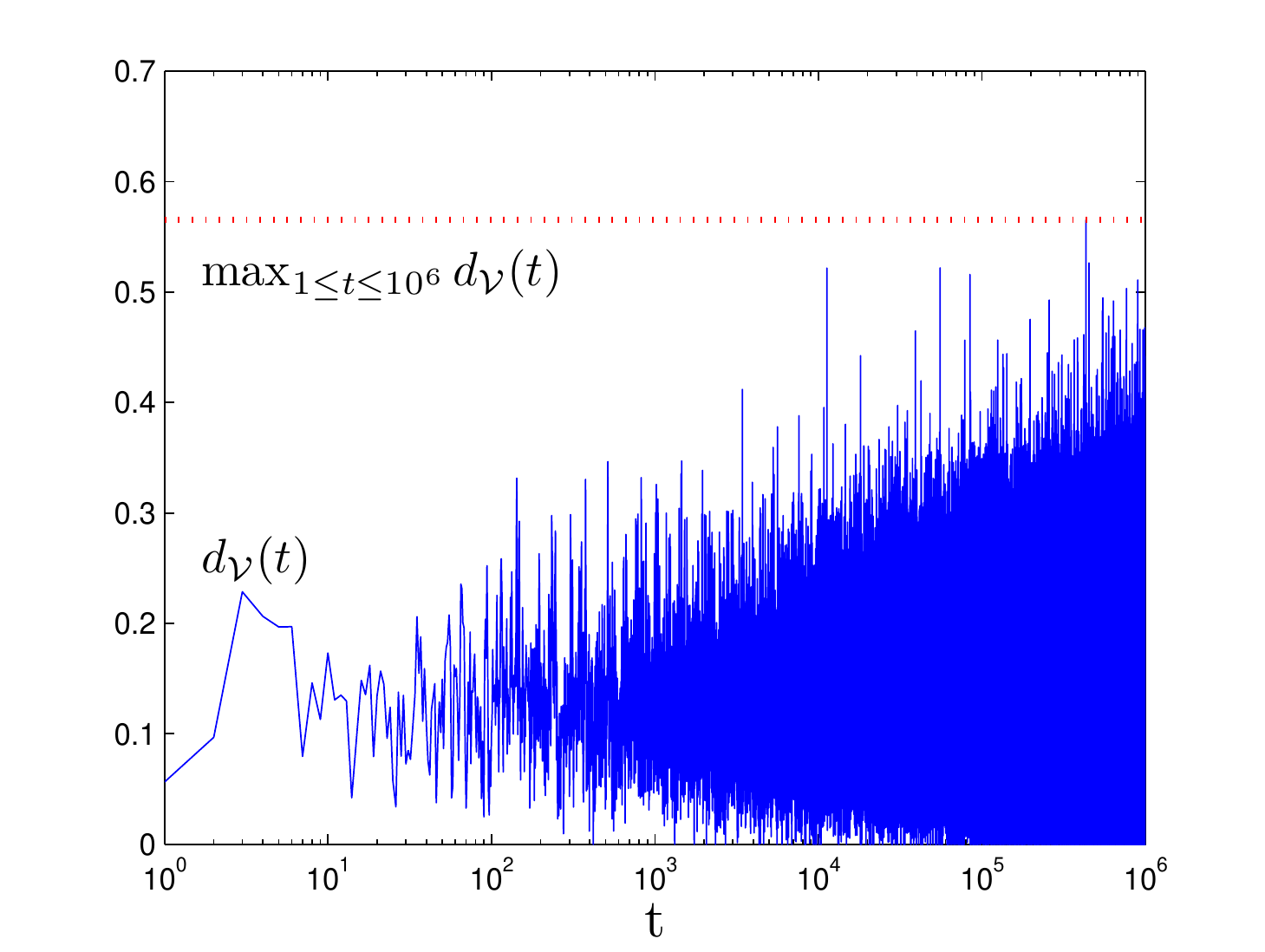}
\caption{The value of $d_{\mathcal{V}}(t)$ under system (\ref{m1}) with $n=4$ and $\eta=0.1$.} \label{Fig3n}
\end{center}
\end{figure}

By Theorem \ref{theorem_1} and Definition \ref{robconsen} we get the following corollary concerning the critical noise amplitude and convergence rate for quasi-synchronization:

\begin{cor}[Critical noise amplitude and convergence rate for quasi-synchronization of system (\ref{m1})]\label{cor_thm_1}
Consider the system (\ref{m1}) satisfying (A1) and $r_{\min}<1$.  Then, for
any initial state, the system asymptotically reaches quasi-synchronization a.s.
if and only if $\eta \leq r_{\min}/2$.

Moreover, if $\eta \leq r_{\min}/2$,  let $\tau$ be the minimal $t$ satisfying $d_{\mathcal{V}}(t)\leq 2\eta$. Then, $d_{\mathcal{V}}(t)\leq 2\eta$ for all $t\geq \tau$, and there exist constants $\lambda_2\in(0,1)$ and $\Lambda_2>0$ depending on  $n, \eta, \underline{\rho}$, and $r_i$, $i\in\mathcal{V}$ only such that
\begin{eqnarray*}\label{cor_thm_1_01}
\begin{aligned}
P\left(\tau> s \right) \leq (1-\lambda_2)^{\lfloor s/\Lambda_2\rfloor}, ~~\forall s\geq 0.
\end{aligned}
\end{eqnarray*}
\end{cor}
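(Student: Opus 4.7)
The plan is to deduce the corollary directly from Theorem \ref{theorem_1} together with an elementary one-step absorption argument; no new machinery is required beyond what Theorem \ref{theorem_1} already provides.

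First I would handle the ``if and only if''. Since $\min_i r_i = r_{\min}$, quasi-synchronization is the statement $\overline{d}_{\mathcal{V}} \leq r_{\min}$. Plugging $\eta \leq r_{\min}/2$ into the first line of (\ref{theo1_00}) gives $\overline{d}_{\mathcal{V}} = 2\eta \leq r_{\min}$, so quasi-synchronization holds a.s. For the converse, if $\eta > r_{\min}/2$, the second line of (\ref{theo1_00}) gives $\overline{d}_{\mathcal{V}} \geq \min\{2\eta,1\}$, and since both $2\eta > r_{\min}$ and $1 > r_{\min}$ (using $r_{\min} < 1$), this minimum strictly exceeds $r_{\min}$, ruling out quasi-synchronization.

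Next I would prove the absorbing property: whenever $d_{\mathcal{V}}(t) \leq 2\eta \leq r_{\min}$, the next configuration also satisfies $d_{\mathcal{V}}(t+1) \leq 2\eta$. Under this hypothesis every pair of opinions lies within every confidence threshold, so $\mathcal{N}_i(t) = \mathcal{V}$ and the averaging term in (\ref{m1}) coincides with $x_{\rm{ave}}(t)$ for every $i$. Using the non-expansiveness of $\Pi_{[0,1]}$,
\[
|x_i(t+1) - x_j(t+1)| \leq |\xi_i(t) - \xi_j(t)| \leq 2\eta,
\]
so by induction $d_{\mathcal{V}}(t) \leq 2\eta$ for every $t \geq \tau$.

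Finally, for the tail estimate, I would apply Theorem \ref{theorem_1} with the parameter choice $\alpha := 2\eta > 0$. With this choice the stopping time $\tau_1$ of (\ref{theo1_00_a}) equals $\min\{s > 0 : d_{\mathcal{V}}(s) \leq 2\eta\}$, and in particular $\tau \leq \tau_1$ (they coincide unless $d_{\mathcal{V}}(0) \leq 2\eta$). Hence (\ref{theo1_01}) with $k=0$ gives $P(\tau > s) \leq P(\tau_1 > s) \leq (1-\lambda_1)^{\lfloor s/\Lambda_1\rfloor}$, and I can set $\lambda_2 = \lambda_1$, $\Lambda_2 = \Lambda_1$; since $\alpha = 2\eta$ is a function of $\eta$, the stated parameter dependence is automatic. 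The only substantive step is the absorption, and even that reduces to the simple observation that $2\eta \leq r_{\min}$ forces a complete interaction graph.
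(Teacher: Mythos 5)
Your proposal is correct and follows essentially the same route as the paper: the equivalence is read off from (\ref{theo1_00}) (your handling of the case $2\eta>1$ via $r_{\min}<1$ is in fact slightly more careful than the paper's), the absorption step is exactly the paper's observation that $d_{\mathcal{V}}(t)\leq 2\eta\leq r_{\min}$ forces $\mathcal{N}_i(t)=\mathcal{V}$ and hence $\widetilde{x}_i(t)=x_{\rm ave}(t)$ for all $i$, and the tail bound is obtained, as in the paper, by invoking (\ref{theo1_01}) with $\alpha=2\eta$. No gaps.
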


Next, we give our main result for system (\ref{m3}).  For any $\eta>r_{\min}/2$, let
$\underline{w}_{\eta}:=\min\{w_i:r_i<2\eta\}$  and
\begin{eqnarray*}
c_{\eta}^2=\max\Big\{2\eta,\min\Big\{\frac{n\eta}{(n-1)\underline{\omega}_{\eta}},n\eta\Big\}\Big\}.
\end{eqnarray*}
Also, for any $\alpha>0$ we define
\begin{equation*}
  c_{\alpha}^3:=\left\{
           \begin{array}{ll}
            2\eta-\alpha  & \mbox{if~} \eta\leq \frac{r_{\min}}{2} \\
            \min\{c_{\eta}^2-\alpha,1\}  &  \mbox{if~} \eta> \frac{r_{\min}}{2}
           \end{array}
         \right..
\end{equation*}

\begin{thm}[Phase transition and switching interval for the heterogeneous HK model with environment noise and global information]\label{theorem_2}
Consider system (\ref{m3}) satisfying (A1) and $r_{\min}<1$. Then
for any initial state $x(0)\in [0,1]^n$,
almost surely
\begin{equation}\label{theo2_00}
  \overline{d}_{\mathcal{V}}\left\{
           \begin{array}{ll}
            =2\eta  & \mbox{if~} \eta\leq \frac{r_{\min}}{2} \\
            \geq \min\{c_{\eta}^2,1\}  &  \mbox{if~} \eta> \frac{r_{\min}}{2}
           \end{array}
         \right..
\end{equation}
Also, for any constant $\alpha>0$, if we define $\tau_0=0$, and $\{\tau_k\}_{k\geq 1}$ as same as (\ref{theo1_00_a}) but using $c_{\alpha}^3$ instead of  $c_{\alpha}^1$,
 then for all $k\geq 0$ and $t\geq 0$ we have
\begin{eqnarray}\label{theo2_01}
\begin{aligned}
P\left(\tau_{k+1}-\tau_{k}> t \right) \leq (1-\lambda_3)^{\lfloor t/\Lambda_3\rfloor},
\end{aligned}
\end{eqnarray}
where $\lambda_3\in (0,1)$ and $\Lambda_3>0$ are constants  only depending on  $n, \eta, \underline{\rho}$,  $\alpha$, and $\omega_i, r_i$, $i\in\mathcal{V}$.
\end{thm}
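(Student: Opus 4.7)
The plan mirrors the strategy used for Theorem~\ref{theorem_1}: first prove the two a.s.\ bounds in (\ref{theo2_00}), then derive the geometric tail (\ref{theo2_01}) from uniform, positive-probability, bounded-time transitions between the regimes $\{d_{\mathcal{V}} \leq \alpha\}$ and $\{d_{\mathcal{V}} \geq c_{\alpha}^3\}$.

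\textbf{Upper bound at $\eta \leq r_{\min}/2$.} I would first show that whenever $d_{\mathcal{V}}(t) \leq r_{\min}$, every pair of agents lies within the smallest confidence radius, so $\mathcal{N}_i(t) = \mathcal{V}$ and the averaging term in (\ref{m3}) collapses to $x_{\mathrm{ave}}(t)$. The update then reduces to $x_i(t+1) = \Pi_{[0,1]}(x_{\mathrm{ave}}(t) + \xi_i(t))$, and non-expansivity of $\Pi_{[0,1]}$ gives $d_{\mathcal{V}}(t+1) \leq 2\eta \leq r_{\min}$, so the set $\{d_{\mathcal{V}} \leq 2\eta\}$ is forward-invariant. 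Reaching it from an arbitrary state is handled by (A1): a uniformly bounded block of noise realizations that pulls every coordinate into a $2\eta$-window around any common target has a positive lower bound on its probability, so the hitting time is a.s.\ finite.

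\textbf{Lower bounds.} For $\eta \leq r_{\min}/2$, to match $\overline{d}_{\mathcal{V}} \geq 2\eta$ I would, for any $\alpha > 0$, first drive the system into a tight cluster as above and then apply one more step of opposing-sign noise at two designated agents to produce a spread of at least $2\eta - \alpha$. For $\eta > r_{\min}/2$, obtaining $c_{\eta}^2$ requires an \emph{isolation} construction: pick an agent $i^*$ attaining $\underline{\omega}_\eta$ (so $r_{i^*} < 2\eta$), drive the remaining $n-1$ agents into a tight cluster at some level $y$, and place $i^*$ far enough from $y$ to be mutually non-neighboring. When $i^*$ is isolated, $\mathcal{N}_{i^*}(t) = \{i^*\}$ and its deterministic restoring drift is $\omega_{i^*}(x_{\mathrm{ave}}(t) - x_{i^*}(t))$; balancing this against a noise value of magnitude $\eta$, together with the geometric identity $x_{i^*} - x_{\mathrm{ave}} = \tfrac{n-1}{n}(x_{i^*} - y)$, yields the separation $n\eta/((n-1)\omega_{i^*})$, while the $n\eta$ term arises when the projection saturates at an endpoint of $[0,1]$. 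Each such configuration is realisable from any predecessor by a bounded-length block under (A1), so the $\limsup$ bound follows by Borel--Cantelli applied to disjoint blocks.

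\textbf{Switching interval and main obstacle.} With both transitions $\{d_{\mathcal{V}} > \alpha\} \to \{d_{\mathcal{V}} \leq \alpha\}$ (handled by the contraction-toward-$x_{\mathrm{ave}}$ block) and $\{d_{\mathcal{V}} < c_\alpha^3\} \to \{d_{\mathcal{V}} \geq c_\alpha^3\}$ (handled by the separation block above, with slack $\alpha$) realized by uniformly bounded blocks of positive probability, I would take $\Lambda_3$ to be the larger of the two block lengths and $\lambda_3$ a common lower bound on the block probabilities, both explicitly computable from $n, \eta, \underline{\rho}, \alpha$ and $\{\omega_i, r_i\}$. Applying the strong Markov property at multiples of $\Lambda_3$ then yields (\ref{theo2_01}). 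The principal obstacle is the isolation-based lower bound: recovering the \emph{exact} constant $\min\{n\eta/((n-1)\underline{\omega}_\eta),\, n\eta\}$ rather than a crude multiple of $\eta$ requires carefully matching the noise block to both the drift--noise balance and to the projection-saturation regime, while simultaneously ensuring that once placed, the separated configuration persists long enough for $d_{\mathcal{V}}(t)$ to actually attain $c_{\alpha}^3$ despite the averaging of the tight cluster pulling against it.
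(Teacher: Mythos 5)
Your proposal is correct and follows essentially the same route as the paper: the paper packages your ``positive-probability bounded noise block'' argument as finite-time robust reachability of a control system (Lemma \ref{robust}), proves the upper bound via clustering plus forward-invariance of $\{d_{\mathcal{V}}\le 2\eta\}$ (Lemma \ref{lem1}), and obtains the two lower bounds exactly as you describe --- a two-agent opposing-noise split for $2\eta$ (Lemma \ref{lem3}) and isolation of the agent attaining $\underline{\omega}_{\eta}$ with the same drift--noise balance and cluster-pinning-at-the-boundary constraints yielding $\min\{n\eta/((n-1)\underline{\omega}_{\eta}),n\eta\}$ (Lemma \ref{lem4}) --- before deriving the geometric tail from Lemma \ref{robust}(ii).
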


\begin{rem}
Differing from system (\ref{m1}), the system (\ref{m3}) has a global average opinion $x_{\rm{ave}}(t)$, which means all agents remain connected for all time. In Theorem \ref{theorem_2}, if we let $\underline{w}_{\eta}$ tend to zero, then $\overline{d}_{\mathcal{V}}\geq \min\{ n\eta,1\}$ for the case when $\eta>r_{\min}/2$. Thus, the results of Theorems \ref{theorem_1} and \ref{theorem_2} are consistent for the case when $\eta \geq \max\{ \frac{r_{\min}}{2}, \frac{1}{n}\}$. However, Theorem \ref{theorem_1} is not a special case of Theorem \ref{theorem_2} because the former cannot be included by the latter for the case when $\max\{ \frac{r_{\min}}{2}, \frac{r_{\max}}{n}\}<\eta<\max\{ \frac{r_{\min}}{2}, \frac{1}{n}\}$.
\end{rem}

Similar to Theorem \ref{theorem_1} we also provide some simulations for Theorem \ref{theorem_2}. Consider the system (\ref{m3}) with $n=20$ agents whose initial opinions, confidence bounds, and noises have the same configurations as Figures \ref{Fig1n} and   \ref{Fig2n}. Assume $\omega_1=\cdots=\omega_n=0.1$. All simulations run up to $10^6$ steps. We first choose $\eta=0.025$ as the same as Figure \ref{Fig1n}, and the value of $d_{\mathcal{V}}(t)$ is shown in Figure \ref{Theorem37an}. This figure displays a similar behavior of $d_{\mathcal{V}}(t)$ as Figure \ref{Fig1n}. Second, we choose $\eta=0.1$ as the same as Figure \ref{Fig2n}, and the value of $d_{\mathcal{V}}(t)$ is shown in Figure \ref{Theorem37bn}. Comparing this figure to Figure \ref{Fig2n}, the system (\ref{m3}) has a smaller $\overline{d}_{\mathcal{V}}$ than the system (\ref{m1}), signifying that the global average opinion can reduce the difference of opinions.

\begin{figure}[htbp]
\begin{center}
\includegraphics[height=1.8in,width=3.3in]{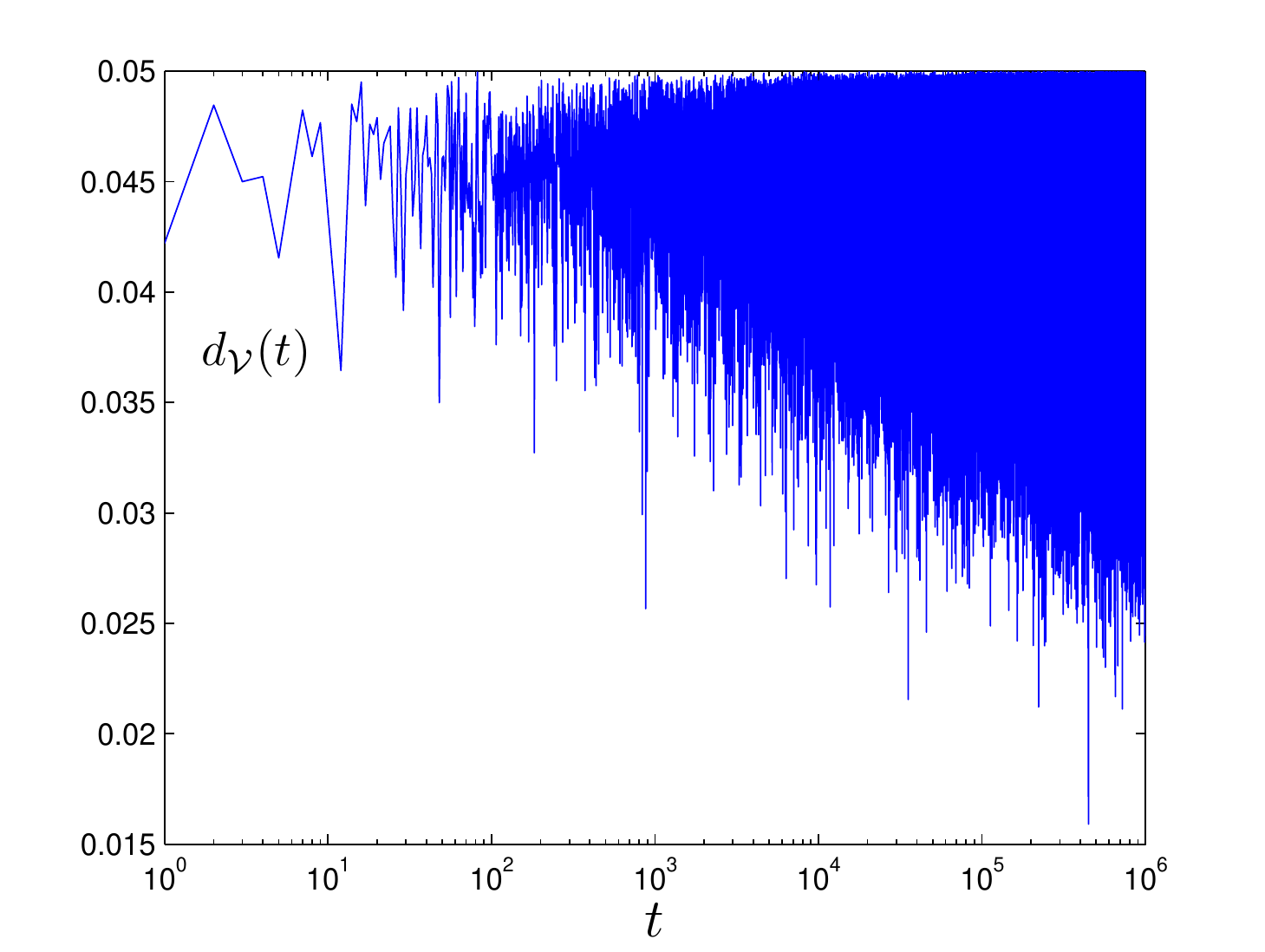}
\caption{The value of $d_{\mathcal{V}}(t)$ under system (\ref{m3}) with $\eta=0.025$.} \label{Theorem37an}
\includegraphics[height=1.8in,width=3.3in]{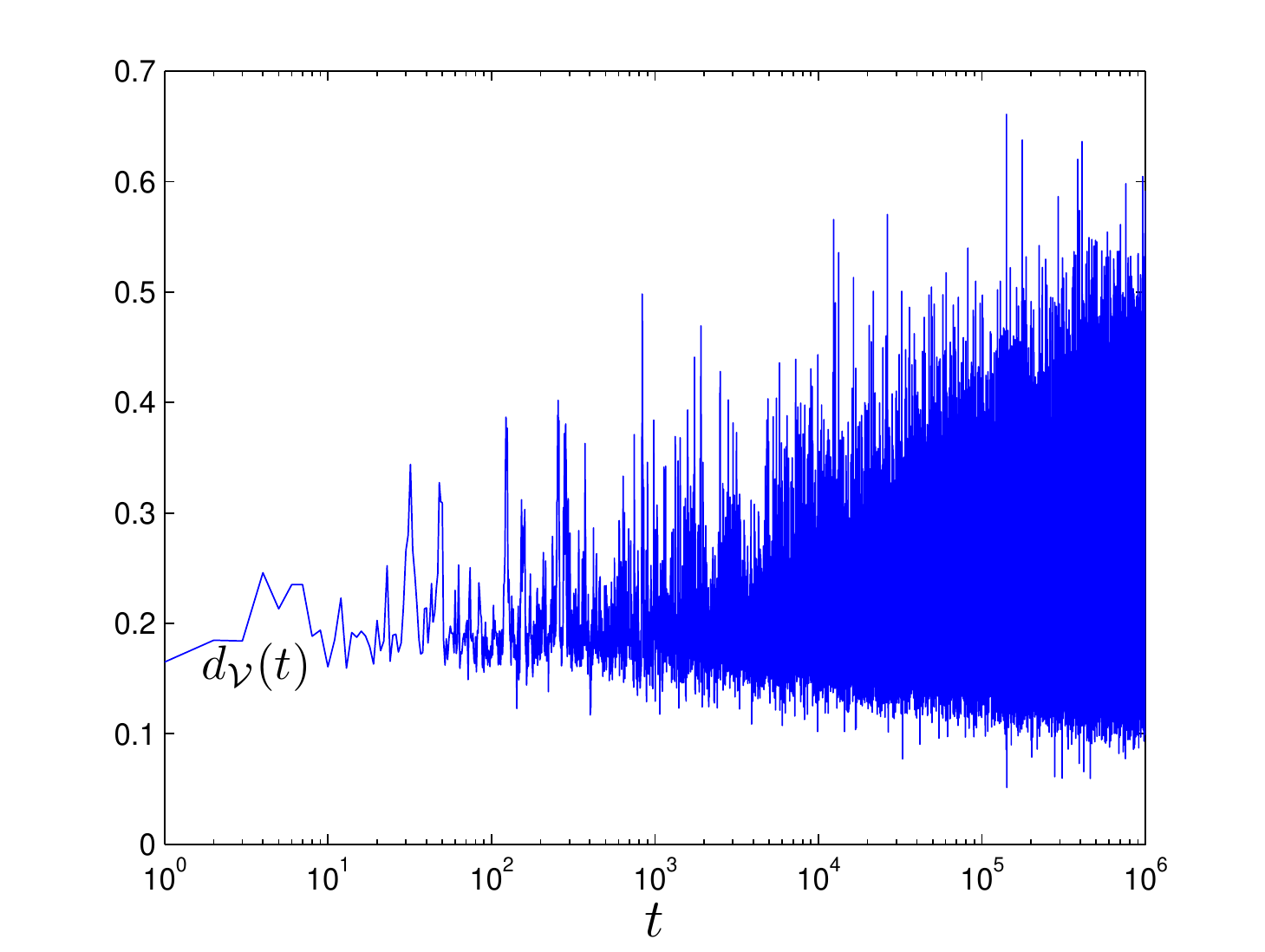}
\caption{The value of $d_{\mathcal{V}}(t)$ under system (\ref{m3}) with $\eta=0.1$.} \label{Theorem37bn}
\end{center}
\end{figure}

%We provide a simulation for $\overline{d}_{\mathcal{V}}$ in the system (\ref{m3}) as follows: Let $n=5$, and randomly choose $(r_1,\ldots,r_5)$ and $(\omega_1,\ldots,\omega_5)$ to be
%$(0.3798, 0.3339, 0.2006, 0.3017, 0.2612)$ and  $(0.1526, 0.2895, 0.0612, 0.1793, 0.2133)$ respectively. The value of $\overline{d}_{\mathcal{V}}$ with respect to (w.r.t.) noise amplitude $\eta$
%is shown in Figure \ref{Fig1}. This figure exhibits the phase transition at the point $\eta=\frac{r_{\min}}{2}\approx 0.1$ which is in accordance with Theorem \ref{theorem_2}.
%
%\begin{figure}[htbp]
%\begin{center}
%\includegraphics[height=2in,width=2.8in]{GENoiseHK.eps}
%\caption{The upper limit of the maximum opinion difference $\overline{d}_{\mathcal{V}}$  w.r.t. noise amplitude $\eta$ for system (\ref{m3}).} \label{Fig1}
%\end{center}
%\end{figure}

Similar to Corollary \ref{cor_thm_1} we can get the following corollary concerning the critical noise amplitude
and convergence rate for quasi-synchronization of system (\ref{m3}):

\begin{cor}[Critical noise amplitude and convergence rate for quasi-synchronization of system (\ref{m3})]\label{cor_thm_2}
Consider the system (\ref{m3}) satisfying (A1) and $r_{\min}<1$. Then, for any initial state, the system asymptotically reaches quasi-synchronization a.s.
if and only if $\eta \leq r_{\min}/2 $.

Moreover, if $\eta \leq r_{\min}/2$, let $\tau$ be the minimal $t$ satisfying $d_{\mathcal{V}}(t)\leq 2\eta$. Then, $d_{\mathcal{V}}(t)\leq 2\eta$ for all $t\geq \tau$, and there exist constants $\lambda_4\in(0,1)$ and $\Lambda_4>0$ depending on  $n, \eta, \underline{\rho}$, and $\omega_i,r_i$, $i\in\mathcal{V}$ only such that
\begin{eqnarray*}\label{cor_thm_2_01}
\begin{aligned}
P\left(\tau> s \right) \leq (1-\lambda_4)^{\lfloor s/\Lambda_4\rfloor}, ~~\forall s\geq 0.
\end{aligned}
\end{eqnarray*}
\end{cor}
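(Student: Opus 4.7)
The plan is to derive this corollary directly from Theorem \ref{theorem_2} together with Definition \ref{robconsen}, paralleling the derivation of Corollary \ref{cor_thm_1} from Theorem \ref{theorem_1}. The argument splits into three independent pieces: the equivalence with $\eta\le r_{\min}/2$, the pathwise invariance of the event $\{d_{\mathcal{V}}(t)\le 2\eta\}$, and the transfer of the exponential tail.

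For the equivalence, I would read it off from (\ref{theo2_00}). When $\eta\le r_{\min}/2$, Theorem \ref{theorem_2} gives $\overline{d}_{\mathcal{V}}=2\eta\le r_{\min}$ almost surely, so Definition \ref{robconsen} yields quasi-synchronization. When $\eta>r_{\min}/2$, the same theorem gives $\overline{d}_{\mathcal{V}}\ge \min\{c_{\eta}^{2},1\}$; since $c_{\eta}^{2}\ge 2\eta>r_{\min}$ by construction and $1>r_{\min}$ by hypothesis, this minimum strictly exceeds $r_{\min}$, so quasi-synchronization fails almost surely.

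For the invariance, I would show that once $d_{\mathcal{V}}(t)\le 2\eta\le r_{\min}$, the inequality persists. The key observation is that $d_{\mathcal{V}}(t)\le r_{\min}\le r_{i}$ forces $\mathcal{N}_{i}(t)=\mathcal{V}$ for every $i$, and therefore $|\mathcal{N}_{i}(t)|^{-1}\sum_{j\in\mathcal{N}_{i}(t)}x_{j}(t)=x_{\mathrm{ave}}(t)$. The update (\ref{m3}) then collapses to
\[
x_{i}(t+1)=\Pi_{[0,1]}\bigl(\omega_{i}x_{\mathrm{ave}}(t)+(1-\omega_{i})x_{\mathrm{ave}}(t)+\xi_{i}(t)\bigr)=\Pi_{[0,1]}\bigl(x_{\mathrm{ave}}(t)+\xi_{i}(t)\bigr),
\]
so the weight $\omega_{i}$ drops out entirely. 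The $1$-Lipschitz property of $\Pi_{[0,1]}$ then gives $|x_{i}(t+1)-x_{j}(t+1)|\le|\xi_{i}(t)-\xi_{j}(t)|\le 2\eta$, and iterating yields $d_{\mathcal{V}}(s)\le 2\eta$ for every $s\ge\tau$.

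For the tail bound, I would apply Theorem \ref{theorem_2} with any fixed $\alpha\in(0,2\eta)$, for instance $\alpha=\eta$. The first stopping time $\tau_{1}$ of the theorem is then the first $s\ge 1$ with $d_{\mathcal{V}}(s)\le\alpha$, and (\ref{theo2_01}) with $k=0$ gives $P(\tau_{1}>s)\le (1-\lambda_{3})^{\lfloor s/\Lambda_{3}\rfloor}$. Since $\alpha<2\eta$, the event $\{d_{\mathcal{V}}(s)\le\alpha\}$ is contained in $\{d_{\mathcal{V}}(s)\le 2\eta\}$, so $\tau\le\tau_{1}$ pathwise and the geometric tail transfers to $\tau$ with $\lambda_{4}=\lambda_{3}$ and $\Lambda_{4}=\Lambda_{3}$. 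I do not anticipate any substantive obstacle here: Theorem \ref{theorem_2} already carries the hard work, and the only non-routine step is the invariance argument above, where the critical point is noticing that the global-information term and the local-average term coincide precisely when the neighborhood becomes complete, so the non-expansiveness of $\Pi_{[0,1]}$ alone closes the loop.
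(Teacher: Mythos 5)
Your proposal is correct and follows essentially the same route as the paper, which proves part (i) directly from (\ref{theo2_00}) and Definition \ref{robconsen} and part (ii) by mirroring the proof of Corollary \ref{cor_thm_1} (applying (\ref{theo2_01}) to bound the tail of $\tau$ and repeating the complete-neighborhood invariance argument of (\ref{pf_thm1_1})). Your only deviations are cosmetic: you take $\alpha=\eta$ rather than the paper's $\alpha=2\eta$ in the tail bound (both work, since $\tau\le\tau_1$ pathwise either way), and you spell out explicitly that $\widetilde{x}_i(t)=x_{\mathrm{ave}}(t)$ once the neighborhood is complete, which is exactly the mechanism the paper invokes implicitly.
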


\begin{rem}
Combining Corollary \ref{cor_thm_2} with Corollary \ref{cor_thm_1} shows that
 the background opinion or global information does not affect the critical noise amplitude of quasi-synchronization.
\end{rem}

%\begin{rem}
%If $\eta\in(0, \frac{r_{\min}}{2}]$, by (\ref{theo1_01}) and  (\ref{theo2_01}) we get
%$$P\left(\tau_{1}> t \right) \leq (1-c)^{\lfloor t/T\rfloor}$$
%with $$\tau_1=\tau_1(2\eta)=\min\{t>0:d_{\mathcal{V}}(t)\leq 2\eta\}.$$
%we get an upper bound of the convergence rate of quasi-synchronization for systems (\ref{m1}) and (\ref{m3}).
%\end{rem}

Next we consider the HK model with global information and communication noise which is much more complex than
the HK model with  global information and environment noise,
 since some agents may be not affected by noise if they have no neighbor except themselves.
For $1\leq i\neq j\leq n$, define $a_i:=\frac{(n-1)(1-\omega_i)\eta}{n}$, $a_{ij}:=a_i+a_j=\frac{(n-1)(2-\omega_i-\omega_j)\eta}{n}$,
\begin{eqnarray}\label{h_00}
  &&h_{ij}\\
  &&:=\left\{
           \begin{array}{ll}
            \frac{(n-1)\eta}{n}[(1-\omega_i)^2+\frac{\omega_i(\omega_j-\omega_i)}{n}]~~\mbox{if~} r_i<a_i\\
           (1-\omega_i)\eta[\frac{1-\omega_i}{n}+\frac{n-2}{n-1}]+\frac{\omega_i(n-1)(\omega_j-\omega_i)\eta}{n^2}\\
             ~~~~~~~~~~~~~~~~~~~~~~~~~~~~~~    \mbox{if~} r_i\in[a_i,a_{ij}) \\
           \frac{(n-1)\eta}{n}[1-\omega_i+\frac{\omega_j-\omega_i}{n}] ~~ \mbox{if~} r_i\geq a_{ij}
           \end{array}
         \right.,\nonumber
\end{eqnarray}
and
\begin{eqnarray*}
 && c_{ij}:=\min\big\{h_{ij}+h_{ji},\\
  &&~~~~~~~~1-(a_i-h_{ij})I_{\{a_i>h_{ij}\}}-(a_j-h_{ji})I_{\{a_j>h_{ji}\}}\big\},
\end{eqnarray*}
where $I_{\{\cdot\}}$ denotes the indicator function.

\begin{thm}[Analytical results for the heterogeneous HK model with communication noise and global information]\label{theorem_2c}
Consider the system (\ref{m6}) satisfying (A2).
%and
%\begin{eqnarray*}
%c_{\eta}^4:=\max_{(i,j)\in\Omega_{\eta}}\min\Big\{2(1-\omega_j)\frac{n-1}{n}\eta, \frac{2(n-2)(1-\omega_i)\eta}{n-2+\omega_i}, \frac{2(n-2)(1-\omega_j)\eta}{n-2+\omega_j},\frac{3-\omega_i-2\omega_j}{2-\omega_i-\omega_j}r_i, \frac{2\eta(1-\omega_1)}{\omega_1}\Big(\frac{n-2}{n-1}-\frac{\omega_1}{n^2}\Big), 2r_{\min},1\Big\}
%\end{eqnarray*}
%if $\Omega_{\eta}\neq\emptyset$.
Then for any initial state, almost surely
\begin{equation*}
  \overline{d}_{\mathcal{V}}\left\{
           \begin{array}{ll}
            =\max_{i\neq j} a_{ij} ~~\mbox{if~} \eta\leq \min_{i\neq j} \frac{n r_i}{(n-1)(2-\omega_i-\omega_j)}, \\
            \geq \min\{\max_{i\neq j}\{a_{ij},c_{ij}\},1\}  ~~  \mbox{otherwise}.
           \end{array}
         \right.
\end{equation*}
\end{thm}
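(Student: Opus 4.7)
The plan is to separate the analysis into the two regimes determined by the noise amplitude $\eta$.

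\textbf{Case 1: $\eta \leq \min_{i\neq j}\frac{nr_i}{(n-1)(2-\omega_i-\omega_j)}$.} The hypothesis is exactly equivalent to $a_{ij}\leq r_i$ for every pair $i\neq j$, which is the slack needed so that noise cannot sever any edge. I would first prove invariance of the ``all-connected'' regime $\mathcal{N}_i(t)=\mathcal{V}$: in this regime $\omega_i x_{\mathrm{ave}}(t)+(1-\omega_i)x_{\mathrm{ave}}(t)=x_{\mathrm{ave}}(t)$, so
\[
x_i(t+1)=\Pi_{[0,1]}\Bigl(x_{\mathrm{ave}}(t)+\tfrac{1-\omega_i}{n}\sum_{k\in\mathcal{V}}\zeta_{ki}(t)\Bigr),
\]
and since $\zeta_{ii}\equiv 0$ and $|\zeta_{ki}|\le\eta$, the correction has magnitude at most $a_i$, giving $d_{\mathcal{V}}(t+1)\leq\max_{i\neq j}a_{ij}\leq\min_i r_i$. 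Entry into this regime in finite time almost surely follows from the contractive effect of the $\omega_i x_{\mathrm{ave}}$ term on isolated or partially isolated agents combined with a Markov argument: over any bounded horizon $T$, (A2) assigns uniform positive probability to noise draws small enough that the deterministic contraction brings every agent within $\min_i r_i$ of the mean.

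For the matching lower bound $\overline{d}_{\mathcal{V}}\geq\max_{i\neq j}a_{ij}$, I would fix a maximizing pair $(i^*,j^*)$ and, for any $\varepsilon>0$, apply (A2) to the event that $\zeta_{ki^*}(t)\in[\eta-\varepsilon,\eta]$ for every $k\neq i^*$ and $\zeta_{kj^*}(t)\in[-\eta,-\eta+\varepsilon]$ for every $k\neq j^*$. This event has probability at least $\underline{\rho}\,\varepsilon^{2(n-1)}$ and forces $|x_{i^*}(t+1)-x_{j^*}(t+1)|\geq a_{i^*j^*}-2\varepsilon$; since its probability is bounded below at every time step, Borel--Cantelli gives $\overline{d}_{\mathcal{V}}\geq a_{i^*j^*}-2\varepsilon$ almost surely. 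Letting $\varepsilon\downarrow 0$ yields the equality.

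\textbf{Case 2: larger $\eta$.} The bound $\overline{d}_{\mathcal{V}}\geq\max_{i\neq j}a_{ij}$ still follows from the construction above whenever the all-connected regime is visited, so the new task is to produce the extra contribution $\max_{i\neq j}c_{ij}$, which reflects \emph{partial disconnection}. The three branches of $h_{ij}$ in~\eqref{h_00} correspond to the structural regimes $r_i\geq a_{ij}$ (agent $i$ stays connected), $r_i\in[a_i,a_{ij})$ (agent $i$ can lose only the far neighbor $j$), and $r_i<a_i$ (agent $i$ can become fully isolated). In each regime I would design a positive-probability noise schedule over a bounded number of steps---using (A2) to attach a lower bound of $\underline{\rho}^{K}$ to the relevant cylinder event---that drives $x_i$ to displacement $h_{ij}$ from $x_{\mathrm{ave}}$, and symmetrically $x_j$ to displacement $h_{ji}$ on the opposite side. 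The cap $1-(a_i-h_{ij})I_{\{a_i>h_{ij}\}}-(a_j-h_{ji})I_{\{a_j>h_{ji}\}}$ in the definition of $c_{ij}$ simply records the saturation by the $[0,1]$ projection.

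The hard part will be the third branch of~\eqref{h_00}: once agent $i$ is isolated, its own noise terms drop out of the update entirely, so the only drivers of displacement are the slow bias supplied by $x_{\mathrm{ave}}$ and the cumulative residual noise during transitions in and out of isolation. Verifying that a judicious schedule---which must simultaneously push $i$ into isolation, maintain isolation long enough for the contraction factor $1-\omega_i$ to produce the asserted displacement, and steer $j$ in the opposite direction---has probability uniformly bounded below by a constant depending only on $n,\eta,\underline{\rho},\{\omega_i\},\{r_i\}$, and that it can be repeated in disjoint time windows, is the combinatorial heart of the argument. Once this positive recurrence is established, Borel--Cantelli closes the lower bound exactly as in Case~1.
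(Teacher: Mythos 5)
Your overall strategy --- design a bounded-horizon noise schedule with (A2)-uniform positive probability, then iterate via Borel--Cantelli for the lower bounds, and prove invariance of the all-connected regime for the upper bound --- is exactly the paper's strategy (there it is packaged as ``finite-time robust reachability'' plus Lemma \ref{robust2}). However, two steps are genuinely incomplete. First, your Case-1 lower bound applies the extremal-noise event at an arbitrary time $t$, but if $\widetilde{x}_{i^*}(t)$ is close to $1$ (or $\widetilde{x}_{j^*}(t)$ close to $0$) the projection $\Pi_{[0,1]}$ clips the push and the achieved separation can drop to roughly $a_{j^*}<a_{i^*j^*}$. The paper avoids this by first steering all opinions (Lemma \ref{lem1_a}) to the specific target $x^*=\Pi_{[0,1]}\bigl(\tfrac12+\tfrac{(\omega_{i^*}-\omega_{j^*})(n-1)\eta}{2n}\bigr)$, chosen so that $x^*+a_{i^*}$ and $x^*-a_{j^*}$ are symmetric about $\tfrac12$ and neither endpoint saturates when $a_{i^*j^*}\le 1$; note that centering at $\tfrac12$ is not always enough when $\omega_{i^*}\neq\omega_{j^*}$. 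You need this positioning step before invoking (A2).

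Second, in Case 2 you explicitly defer ``the combinatorial heart,'' but that is precisely where the theorem's content lies: one must exhibit the schedule whose terminal displacements equal the specific quantities $h_{ij}$ in (\ref{h_00}), and the paper's Lemma \ref{lem4_a} does this with a \emph{two}-step construction --- step one displaces $i$ by $a_i$ and $j$ by $-a_j$ while pinning the remaining $n-2$ agents near the common value, which determines $\mathcal{N}_i(t_1+1)$ according to whether $r_i<a_i$, $r_i\in[a_i,a_{ij})$, or $r_i\ge a_{ij}$; step two then exploits that neighborhood structure. Your description of the isolated branch (``maintain isolation long enough for the contraction factor $1-\omega_i$ to produce the asserted displacement'') mischaracterizes the mechanism: once $\mathcal{N}_i=\{i\}$ the update $x_i(t+2)=\omega_i x_{\rm ave}(t+1)+(1-\omega_i)x_i(t+1)$ \emph{contracts} the displacement toward the mean at each step, so prolonging isolation only loses ground; the value $(1-\omega_i)^2$ in $h_{ij}$ comes from exactly one post-isolation step applied to the one-step displacement $a_i=(1-\omega_i)\tfrac{(n-1)\eta}{n}$. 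Without carrying out this construction and the accompanying choice of the starting level $x^*$ (which is what produces the saturation cap $1-(a_i-h_{ij})I_{\{a_i>h_{ij}\}}-(a_j-h_{ji})I_{\{a_j>h_{ji}\}}$ in $c_{ij}$), the stated lower bound $\overline{d}_{\mathcal{V}}\ge\min\{\max_{i\ne j}\{a_{ij},c_{ij}\},1\}$ is not established.
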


The result in Theorem \ref{theorem_2c} seems very complex. If we consider the large population that $n\rightarrow\infty$, and all agents has
a same belief factor $\omega^*$ in the average opinion $x_{\rm{ave}}(t)$, then by Theorem \ref{theorem_2c} we can get
\begin{equation*}
  \overline{d}_{\mathcal{V}}\left\{
           \begin{array}{ll}
            =2(1-\omega^*)\eta  &\mbox{if~} \eta\leq  \frac{r_{\min}}{2(1-\omega^*)} \\
            \geq \min\{2(1-\omega^*)\eta,1\}  & \mbox{otherwise}
           \end{array}
         \right..
\end{equation*}
In this case, we can see that $\overline{d}_{\mathcal{V}}$ still depends on the minimal confidence bound $r_{\min}$, though the phase transition is unknown. Also, it is shown that if $\eta\leq r_{\min}/2$, then  $\bar{d}_{\mathcal{V}}=2\eta(1-\omega^*)$ is dependent on $\omega^*$, whereas $\bar{d}_{\mathcal{V}}=2\eta$ in Theorem \ref{theorem_2} is independent of $\{\omega_i\}$. The reason for this difference is that the noise in system (\ref{m6})  has a product with $1-\omega_i$, whereas the noise in system (\ref{m3}) does not have a product with $1-\omega_i$.

All the above results study the upper limit $\overline{d}_{\mathcal{V}}$ of the maximum opinion difference. In fact,
for the lower limit $\underline{d}_{\mathcal{V}}$ we can get the following simple result:

\begin{thm}[The lower limit of maximum opinion difference in heterogeneous noisy HK models]\label{prop_1}
Consider systems (\ref{m1}) and (\ref{m3}) satisfying (A1), and system (\ref{m6}) satisfying (A2).
 Then for any initial state, $\underline{d}_{\mathcal{V}}=0$ a.s..
\end{thm}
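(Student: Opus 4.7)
The plan is to establish, for each of the three systems, the following uniform one-shot hitting estimate: for every $\varepsilon>0$ there exist $T_\varepsilon\in\mathds{N}$ and $p_\varepsilon>0$ such that
\begin{equation*}
P\Big(\min_{t<s\leq t+T_\varepsilon}d_{\mathcal{V}}(s)\leq\varepsilon\,\Big|\,\mathcal{F}^t\Big)\geq p_\varepsilon\quad\text{a.s.}
\end{equation*}
for every $t\geq 0$. Once this is in place, the conditional Borel-Cantelli lemma applied on the disjoint blocks $(kT_\varepsilon,(k+1)T_\varepsilon]$ forces $\{d_{\mathcal{V}}(s)\leq\varepsilon\}$ to occur infinitely often almost surely; intersecting over a sequence $\varepsilon_k\downarrow 0$ then yields $\underline{d}_{\mathcal{V}}=0$ a.s.

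For systems (\ref{m1}) and (\ref{m3}) I would take $\varepsilon=0$ and use the explicit \emph{push-to-$\mathbf{1}$} event. Set $T_0:=\lceil 2/\eta\rceil$ and consider the event $\{\xi_i(s)\in[\eta/2,\eta]\ \forall\, i,\ \forall\, s\in[t,t+T_0)\}$. Since both $x_{\rm{ave}}(s)\geq\underline{x}(s):=\min_j x_j(s)$ and $|\mathcal{N}_i(s)|^{-1}\sum_{j\in\mathcal{N}_i(s)}x_j(s)\geq\underline{x}(s)$, the pre-projection argument of each agent is at least $\underline{x}(s)+\eta/2$ (treating $\omega_i\equiv 0$ for (\ref{m1})), so after projection $\underline{x}(s+1)\geq\min\{1,\underline{x}(s)+\eta/2\}$. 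Iterating $T_0$ times forces $\underline{x}(t+T_0)=1$, hence $d_{\mathcal{V}}(t+T_0)=0$; by (A1) plus iterated conditioning, this event has probability at least $(\underline{\rho}(\eta/2)^n)^{T_0}>0$ uniformly in the history.

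System (\ref{m6}) is more delicate because an \emph{isolated} agent (with $\mathcal{N}_i(s)=\{i\}$) receives no communication noise and evolves deterministically by $x_i(s+1)=\omega_i x_{\rm{ave}}(s)+(1-\omega_i)x_i(s)$. Fix $\varepsilon\in(0,r_{\min})$ and introduce the deterministic max-noise trajectory $\tilde{x}(\cdot)$ obtained by setting every $\zeta_{ji}(s)=\eta$. The key auxiliary claim is $\tilde{x}(s)\to\mathbf{1}$, argued through: (i) $\underline{\tilde{x}}(s)$ is non-decreasing; (ii) every non-isolated agent gains at least $(1-\max_i\omega_i)\eta/2$ per step until reaching $1$; (iii) any isolated agent at the current minimum is pulled up by at least $(\min_i\omega_i)(\tilde{x}_{\rm{ave}}(s)-\underline{\tilde{x}}(s))$; and (iv) a fixed-point analysis rules out any invariant configuration other than $\mathbf{1}$, since isolated fixed-points force $\tilde{x}^*_i=\tilde{x}^*_{\rm{ave}}$, a uniform configuration at value $v<1$ fails to be fixed (all agents being mutual neighbors receive a strict positive push $(1-\omega_i)\eta(n-1)/n>0$), and a non-isolated fixed-point below $1$ must exceed a convex combination of $\{\tilde{x}^*_j\}$ by the strictly positive noise amount $(1-\omega_i)\eta(|\mathcal{N}_i|-1)/|\mathcal{N}_i|$, which can be combined with the isolated identity to contradict. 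Thus some $T_\varepsilon$ gives $\tilde{x}(T_\varepsilon)\in[1-\varepsilon/2,1]^n$; in this neighborhood all agents are mutual neighbors so the one-step map becomes Lipschitz in the noise, and one finds $\delta>0$ such that $\zeta_{ji}(s)\in[\eta-\delta,\eta]$ throughout $[t,t+T_\varepsilon)$ implies $d_{\mathcal{V}}(t+T_\varepsilon)\leq\varepsilon$, an event of probability at least $(\underline{\rho}\delta^{n(n-1)})^{T_\varepsilon}>0$ by (A2).

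The principal obstacle is step (iv) together with the continuity transfer for (\ref{m6}): the neighbor graph $\mathcal{N}_i(\cdot)$ depends discontinuously on the state via the threshold hypersurfaces $\{|\tilde{x}_j-\tilde{x}_i|=r_i\}$, so one must either argue that $\tilde{x}$ crosses these surfaces only transversally (so small perturbations preserve the combinatorial type of $\mathcal{N}_i$ for a uniform $\delta$) or circumvent the issue by running the perturbed dynamics long enough that it enters the all-neighbors regime, and the mixed-regime fixed-point exclusion must carefully combine the isolated-agent convex-combination identity with the strict positive push enjoyed by non-isolated agents.
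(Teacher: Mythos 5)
Your overall scheme (a hitting estimate with probability bounded below uniformly in the history, followed by conditional Borel--Cantelli and intersection over $\varepsilon_k\downarrow 0$) is in substance the same as the paper's, which packages exactly this into Lemmas \ref{robust} and \ref{robust2}. For systems (\ref{m1}) and (\ref{m3}) your push-to-$\mathbf{1}$ event is correct and in fact simpler than the paper's route: the paper invokes robust reachability of $S_{1/2,\alpha}$ via Lemma \ref{lem1}, i.e.\ it steers all agents near an interior point, whereas you exploit the projection $\Pi_{[0,1]}$ to obtain exact consensus at the boundary in $T_0=\lceil 2/\eta\rceil$ steps with probability at least $(\underline{\rho}(\eta/2)^n)^{T_0}$, uniformly in the state. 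That part stands.

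The treatment of system (\ref{m6}) has a genuine gap, which you partly acknowledge. Three specific problems. (a) Even granting $\tilde{x}(s)\to\mathbf{1}$ for each fixed initial condition, the hitting estimate requires a horizon $T_\varepsilon$ uniform over all configurations in $[0,1]^n$; since the dynamics is discontinuous in the state, no compactness argument supplies this, and an isolated agent at the minimum is pulled up only by $\omega_i(\tilde{x}_{\rm ave}(s)-\tilde{x}_i(s))$, which can be arbitrarily small, so there is no uniform per-step gain. (b) The fixed-point exclusion in your step (iv) is not valid for a discontinuous map: limit points of $\tilde{x}(s)$ need not be fixed points and the neighbor sets along the trajectory need not stabilize. (c) Most seriously, the continuity transfer fails: the noise-to-trajectory map is discontinuous across the threshold surfaces $|x_j-x_i|=r_i$, the extremal trajectory may approach or sit on such a surface, and no transversality is available, so $\zeta_{ji}(s)\in[\eta-\delta,\eta]$ throughout the window does not force the perturbed trajectory to track $\tilde{x}$. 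The paper avoids all three issues by its robust-control formulation (Definition \ref{def_reach2} together with Lemmas \ref{robust2} and \ref{lem1_a}): the inputs $u_{ji}(t)$ are chosen as feedback on the actually realized, perturbed state, and the target set must be reached for \emph{every} admissible uncertainty $b_{ji}(t)$, so continuity of the flow is never invoked; isolated agents are handled through the global term $\omega_i x_{\rm ave}(t)$, which contracts them toward the controllable bulk, rather than through an extremal-noise trajectory. To make your argument work for (\ref{m6}) you would essentially have to reconstruct Lemma \ref{lem1_a}.
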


Theorem \ref{prop_1} indicates that the opinions of all agents can reach almost consensus at infinite moments, however the
consensus is not a stable state and the opinions may diverge under the influence of noise.

\subsection{Proofs of Theorems \ref{theorem_1} and \ref{theorem_2}, and Corollaries \ref{cor_thm_1} and \ref{cor_thm_2}}\label{PT_1c}

We adopt the method of ``transforming the analysis of a stochastic
system into the design of control algorithms"
first proposed by \cite{Chen2017}. This method requires
the construction of some new systems to help with the analysis of the noisy HK models.
For $i\in\mathcal{V}$ and $t\geq 0$, let
\begin{eqnarray}\label{lem1_0}
\widetilde{x}_i(t)=\left\{%
\begin{array}{ll}
|\mathcal{N}_i(t)|^{-1}\sum\limits_{j\in \mathcal{N}_i(t)}x_j(t) \\
~~~~ \mbox{for protocols}~(\ref{m1})~\mbox{and}~(\ref{m4})\\
\omega_i x_{\rm{ave}}(t)+ \frac{1-\omega_i}{|\mathcal{N}_i(t)|}\sum\limits_{j\in \mathcal{N}_i(t)}x_j(t) \\
~~~~ \mbox{for protocols}~(\ref{m3})~\mbox{and}~(\ref{m6})
\end{array}%
\right..
\end{eqnarray}
From this definition the systems (\ref{m1}) and (\ref{m3}) can be rewritten as $$x_i(t+1)=\Pi_{[0,1]}(\widetilde{x}_i(t)+\xi_i(t)).$$

To analyze systems (\ref{m1}) and (\ref{m3}), we  construct two robust control systems as follows. For $i\in\mathcal{V}$ and $t\geq 0$, let $\delta_i(t)\in(0,\eta)$ be an arbitrarily given real number,
$u_i(t)\in [-\eta+\delta_i(t),\eta-\delta_i(t)]$ denotes a bounded control input, and $b_i(t)\in [-\delta_i(t),\delta_i(t)]$ denotes the
parameter uncertainty.
For protocol (\ref{m1}) we construct a control system that for all $i\in\mathcal{V}$ and $t\geq 0$,
\begin{eqnarray}\label{m1_new}
 && x_i(t+1)\\
 &&=\Pi_{[0,1]}\big( |\mathcal{N}_i(t)|^{-1}\sum\limits_{j\in \mathcal{N}_i(t)}x_j(t)+u_i(t)+b_i(t)\big).\nonumber
\end{eqnarray}
Similarly, for protocol (\ref{m3}) we construct a control system that  for all $i\in\mathcal{V}$ and $t\geq 0$,
\begin{eqnarray}\label{m3_new}
  &&x_i(t+1)=\Pi_{[0,1]}\\
  &&\Big( \omega_i x_{\rm{ave}}(t)+ \frac{1-\omega_i}{|\mathcal{N}_i(t)|}\sum\limits_{j\in \mathcal{N}_i(t)}x_j(t)+u_i(t)+b_i(t)\Big),\nonumber
\end{eqnarray}
By (\ref{lem1_0}),  the systems (\ref{m1_new}) and (\ref{m3_new}) can be rewritten as $$x_i(t+1)=\Pi_{[0,1]}(\widetilde{x}_i(t)+u_i(t)+b_i(t)).$$

 Given a set $ S\subseteq [0,1]^n$, we say $ S$ \emph{is reached at time $t$} if  $x(t)\in  S$, and \emph{is reached in the time $[t_1,t_2]$} if there exists $t'\in [t_1,t_2]$ such that  $x(t')\in  S$. Based on this, we define the robust reachability of a set as follows.

\begin{defn}\label{def_reach}
Let  $ S_1, S_2\subseteq  [0,1]^n$ be two state sets.
Under protocol (\ref{m1_new}) (or (\ref{m3_new})), $S_1$ is said to be finite-time robustly reachable from $S_2$, if, for any $x(0) \in S_2$, $S_1$ is reached at time $0$,
or there exist constants $T>0$ and $\varepsilon\in(0,\eta)$ independent of $x(0)$  such that we can find $\delta_i(t)\in[\varepsilon,\eta)$ and $u_i(t)\in [-\eta+\delta_i(t),\eta-\delta_i(t)]$, $i\in\mathcal{V}$, $0\leq t<T$ which guarantee that
$ S_1$ is reached in the time $[1,T]$ for arbitrary $b_i(t)\in [-\delta_i(t),\delta_i(t)]$,
$i\in\mathcal{V}$, $0\leq t<T$.
\end{defn}

%\begin{rem}\label{rem_def_reach}
%Under normal circumstances, we choose $\delta_i(t)$ being a constant $\varepsilon>0$ for all $1\leq i\leq n$ and $0\leq t<T$ to guarantee the system (\ref{model2_new}) (or (\ref{model2})) robustly reach a designated state set in the time $[1,T]$. Additionally, we usually set $\varepsilon$ sufficiently small such that the
%uncertainty item $b_i(t)$ does not affect the system's macro states such as the ordered or disordered states in finite time.
%\end{rem}

With this definition and similar to Lemma 3.1 in \cite{Chen2017}, we get the following lemma:

\begin{lem}\label{robust}
Assume that (A1) holds.
Let $ S_1,\ldots,  S_k\subseteq  [0,1]^n$, $k\geq 1$ be state sets and assume they are finite-time robustly reachable from $[0,1]^n$
under protocol (\ref{m1_new}) (or (\ref{m3_new})). Suppose the initial opinions $x(0)$ are arbitrarily given. Then for system (\ref{m1}) (or (\ref{m3})):\\
(i)~ With probability $1$  there exists an infinite sequence $t_1<t_2<\ldots$ such that $ S_j$ is reached at time
$t_{lk+j}$ for all $j=1,\ldots,k$ and $l\geq 0$.\\
(ii)~ There exist constants $T>0$ and $c\in (0,1)$ such that
$$P\left(\tau_i-\tau_{i-1}>t\right)\leq c^{\lfloor t/T\rfloor}, \forall i,t\geq 1,$$
where $\tau_0=0$ and
$\tau_i:=\min\{s: \mbox{ there exist }\tau_{i-1}<t_1'<t_2'<\cdots<t_k'=s \mbox{ such that for all }j\in[1,k], S_j \mbox{ is reached at time }t_j'\}$ for $i\geq 1$.
\end{lem}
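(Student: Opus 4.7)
The plan is to exploit the freedom in choosing the control $u_i(t)$ in (\ref{m1_new}) or (\ref{m3_new}) so as to ``simulate'' a deterministic trajectory inside the stochastic system. Specifically, I would couple the two models via the identification $\xi_i(t)=u_i(t)+b_i(t)$: whenever the realized noise $\xi_i(t)$ happens to lie in the interval $[u_i(t)-\delta_i(t),\, u_i(t)+\delta_i(t)]$, the trajectory of (\ref{m1}) (or (\ref{m3})) coincides with some admissible trajectory of its controlled counterpart, and the finite-time robust reachability hypothesis then forces the visit of each $S_j$ in the prescribed order.

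First I would concatenate the $k$ reachability guarantees. For each $j\in\{1,\ldots,k\}$ there exist $T_j\geq 1$ and $\varepsilon_j\in(0,\eta)$ such that from any state in $[0,1]^n$ one can pick $\delta_i(t)\in[\varepsilon_j,\eta)$ and $u_i(t)\in[-\eta+\delta_i(t),\eta-\delta_i(t)]$ that force $S_j$ within $T_j$ steps for every admissible $b_i(t)$. Setting $T:=T_1+\cdots+T_k$ and $\varepsilon:=\min_j\varepsilon_j$ and applying these strategies one after the other, I obtain a state-feedback control law on $[0,T)$ that, regardless of uncertainty, visits $S_1,\ldots,S_k$ sequentially inside $[1,T]$.

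Next I would convert this deterministic guarantee into a probabilistic lower bound via (A1). At time $t$, conditioned on the past, the event $\bigcap_{i=1}^n\{\xi_i(t)\in[u_i(t)-\delta_i(t),\,u_i(t)+\delta_i(t)]\}$ has probability at least $\underline{\rho}\prod_{i=1}^n 2\delta_i(t)\geq \underline{\rho}(2\varepsilon)^n$, since the chosen $u_i(t),\delta_i(t)$ are measurable functions of the past state. Multiplying over $t=0,\ldots,T-1$, a whole block succeeds with probability at least $p:=[\underline{\rho}(2\varepsilon)^n]^T>0$. Part (i) then follows by a Borel--Cantelli-type argument applied to the disjoint blocks $[mT,(m+1)T)$, $m=0,1,2,\ldots$: conditional on $\mathcal{F}^{mT-1}$, each block succeeds with probability at least $p$, so almost surely infinitely many do, producing the required sequence $t_1<t_2<\cdots$. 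For part (ii), starting from $\tau_{i-1}$ and bounding the number of failed consecutive blocks by a geometric random variable with parameter $\geq p$ gives $P(\tau_i-\tau_{i-1}>t)\leq(1-p)^{\lfloor t/T\rfloor}$, i.e., one may take $c=1-p$.

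The hard part will be the measurability and adaptedness bookkeeping of the coupling: because the feedback controls $u_i(t),\delta_i(t)$ depend on the full random past, one must verify that (A1)'s conditional lower bound applies uniformly over all realizations, legitimizing the multiplicative step-by-step bound over the block of length $T$ as well as the geometric restart argument from the stopping times $\tau_{i-1}$. Minor but necessary checks include the degenerate case where $x(0)$ already lies in some $S_j$ (so the reachability clause is trivial in Definition~\ref{def_reach}) and the requirement that after each reset the chained strategies still lie in the admissible ranges for $\delta_i(t)$ and $u_i(t)$.
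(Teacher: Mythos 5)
Your proposal is correct and follows essentially the same route as the paper: identify the noise with control-plus-uncertainty, use (A1) to lower-bound the probability that the noise realization falls in the window $[u_i(t)-\delta_i(t),u_i(t)+\delta_i(t)]$ at every step of a concatenated block of length $T=T_1+\cdots+T_k$, and then conclude (i) by a Borel--Cantelli argument over disjoint blocks and (ii) by the resulting geometric bound. The only cosmetic difference is the constant (the paper keeps the per-set bound $\underline{\rho}^{T_j-1}(2\varepsilon_j)^{n(T_j-1)}$ rather than your uniform $[\underline{\rho}(2\varepsilon)^n]^T$), which is immaterial.
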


\begin{proof} (i)
This proof is similar to the proof of Lemma 3.1 (i) in \cite{Chen2017}. To simplify the exposition we only give a proof sketch here.
First, because $S_j (1\leq j\leq k)$ is finite-time robustly reachable under protocol (\ref{m1_new}) (or (\ref{m3_new})), so with Definition \ref{def_reach} there exist constants $T_j\geq 2$ and $\varepsilon_j\in(0,\eta)$ such that for any $t\geq 0$ and $x(t)\notin S_j$,
 we can find parameters $\delta_i(t')\in [\varepsilon_j,\eta)$ and control inputs $u_i(t')\in [-\eta+\delta_i(t'),\eta-\delta_i(t')]$, $1\leq i\leq n$, $t\leq t'\leq t+T_j-2$ with which the set $ S_j$ is reached in the time $[t+1,t+T_j-1]$ for any uncertainties $b_i(t')\in [-\delta_i(t'),\delta_i(t')]$, $1\leq i\leq n, t\leq t'\leq t+T_j-2$.
This acts on protocol (\ref{m1}) (or (\ref{m3})) indicating that for any $x(t-1)\in [0,1]^n$,
 \begin{eqnarray*}\label{rob_1}
\begin{aligned}
&P\left(\left\{\mbox{$ S_j$ is reached in $[t+1, t+T_j-1]$}\right\}| x(t-1)\right)\\
&\geq P\left(\bigcap_{t\leq t'\leq t+T_j-2}\bigcap_{1\leq i\leq n}\right.\\
&~~\left\{\xi_i(t')\in [u_i(t')-\delta_i(t'),u_i(t')+\delta_i(t')] \right\}|x(t-1)\Bigg)\\
&\geq \prod_{t'=t}^{t+T_j-2}\left[\underline{\rho} \prod_{i=1}^n (2\delta_i(t'))\right]\\
&\geq \underline{\rho}^{T_j-1} \left(2 \varepsilon_j \right)^{n (T_j-1)},
\end{aligned}
\end{eqnarray*}
where the second inequality uses (A1) and the similar discussion to (11) in \cite{Chen2017}.
Set
$$E_{j,t}:=\left\{\mbox{$ S_j$ is reached in $[t+1, t+T_j-1]$}\right\},$$
 $E_t:=\bigcap_{j=1}^k E_{j,t+\sum_{l=1}^{j-1} T_l}$, and $T:=T_1+T_2+\ldots+T_k$.
Similar to (13) in \cite{Chen2017} we get
\begin{eqnarray*}\label{rob_3}
&&P\Big(\bigcap_{m=M}^{\infty}E_{mT}^c\Big)=0 \mbox{~for all~} M>0,
\end{eqnarray*}
which indicates that with probability $1$ there exists an infinite sequence $m_1<m_2<\ldots$ such that $E_{m_l T}$ occurs for all $l\geq 1$.
By the definition of $E_t$, for each $l\geq 0$ we can find a time sequence $t_{lk+j}\in [m_l T+\sum_{p=1}^{j-1} T_p, m_l T+\sum_{p=1}^{j} T_p-1]$, $1\leq j\leq k$ such that $S_j$ is reached at time $t_{lk+j}$.

(ii) Same as the proof of Lemma 3.1 (ii) in \cite{Chen2017}.
\end{proof}

%\begin{rem}
%The constants $c$ and $T$ in Lemma \ref{robust} and the following Lemma \ref{robust2}
%\end{rem}

Lemma \ref{robust} builds a connection between the noisy HK system (\ref{m1}) (or (\ref{m3})) and the HK-control system (\ref{m1_new}) (or (\ref{m3_new})). According to Lemma \ref{robust}, to prove a set $S$ is reached a.s. under a noisy HK system, we only need to design control algorithms for the corresponding HK-control system such that the set $S$ is robustly reached in finite time. Before the proof of Theorem \ref{theorem_1} we introduce
some useful notions and lemmas.

For any constants $z\in [0,1]$ and $\alpha>0$, define the set
\begin{equation}\label{set_less}
S_{z,\alpha}:=\Big\{(x_1,\ldots,x_n)\in[0,1]^n: \max_{i\in\mathcal{V}}|x_i-z|<\alpha\Big\}.
\end{equation}

\begin{lem}\label{lem1}
For any constants $z'\in [0,1]$ and $\alpha'>0$, $S_{z',\alpha'}$ is finite-time robustly reachable from $[0,1]^n$ under protocols
(\ref{m1_new}) and (\ref{m3_new}).
\end{lem}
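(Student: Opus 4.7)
The plan is to construct, for both (\ref{m1_new}) and (\ref{m3_new}), an explicit saturating feedback $u_i(t)$ that drives every opinion into $S_{z',\alpha'}$ within a bounded number of steps regardless of the disturbances $b_i(t)$. I would fix $\delta:=\min\{\alpha'/3,\eta/3\}\in(0,\eta)$ and set $\delta_i(t)\equiv\delta$ for every $i\in\mathcal{V}$ and $t\ge 0$. This choice guarantees $\eta-2\delta>0$ (strictly positive control authority beyond the worst-case disturbance) and $2\delta<\alpha'$ (any residual error of size $2\delta$ still lies inside $S_{z',\alpha'}$). The constants $\varepsilon=\delta$ and the horizon $T$ produced below will then depend only on $\eta$ and $\alpha'$, as demanded by Definition \ref{def_reach}.

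The key structural remark is that in both (\ref{m1_new}) and (\ref{m3_new}), the intermediate value $\widetilde{x}_i(t)$ in (\ref{lem1_0}) is a convex combination of $\{x_j(t)\}_{j\in\mathcal{V}}$. Writing $M(t):=\max_{j\in\mathcal{V}}|x_j(t)-z'|$, this yields the uniform bound $|\widetilde{x}_i(t)-z'|\le M(t)$ for every $i$, so it suffices to drive each $\widetilde{x}_i(t)$ toward $z'$ and let convexity transfer the contraction to $M(t)$.

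Next I would choose the saturating feedback $u_i(t)=z'-\widetilde{x}_i(t)$ whenever $|z'-\widetilde{x}_i(t)|\le\eta-\delta$, and otherwise $u_i(t)=\pm(\eta-\delta)$ with the sign chosen to move $\widetilde{x}_i(t)$ toward $z'$. A short case analysis, using that $\Pi_{[0,1]}$ is $1$-Lipschitz and $z'\in[0,1]$, then yields the one-step bound $|x_i(t+1)-z'|\le\max\{2\delta,\,M(t)-(\eta-2\delta)\}$ uniformly over $i$ and over $b_i(t)\in[-\delta,\delta]$. Taking the max over $i$ gives the contraction $M(t+1)\le\max\{2\delta,\,M(t)-(\eta-2\delta)\}$.

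Iterating from the trivial bound $M(0)\le 1$ gives $M(T)\le 2\delta<\alpha'$, hence $x(T)\in S_{z',\alpha'}$, for any $T\ge\lceil 1/(\eta-2\delta)\rceil+1$, which establishes robust reachability from all of $[0,1]^n$. The main obstacle is the one-step analysis near the saturation threshold $|\widetilde{x}_i(t)-z'|\approx\eta-\delta$: one must verify that after adding the disturbance and applying $\Pi_{[0,1]}$ the opinion cannot overshoot $z'$ by more than $2\delta$, which is precisely where the margin $\delta<\alpha'/2$ is consumed; the saturated and unsaturated cases must both be handled, and the non-expansivity of the projection around $z'\in[0,1]$ is essential in both.
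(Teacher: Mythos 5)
Your proposal is correct and follows essentially the same route as the paper's proof: a dead-zone/saturating state feedback that sets $u_i(t)=z'-\widetilde{x}_i(t)$ when this lies within the admissible control range and saturates at $\pm(\eta-\delta)$ otherwise, yielding a per-step contraction of the maximal deviation from $z'$ by at least $\eta-2\delta$ until it falls below $2\delta<\alpha'$. The only differences are cosmetic parameter choices (the paper takes $\delta_i(t)=\alpha'$ after assuming WLOG $\alpha'<\eta/2$, and tracks $x_{\max}(t)$ and $x_{\min}(t)$ separately rather than $M(t)=\max_i|x_i(t)-z'|$).
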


For any constant $\beta\in[0,1]$, denote the set
\begin{equation}\label{set_big}
E_{\beta}:=\Big\{(x_1,\ldots,x_n)\in[0,1]^n: \max_{i,j\in\mathcal{V}}|x_i-x_j| \geq \beta\Big\}.
\end{equation}

\begin{lem}\label{lem2}
If $r_{\min}<1$ and $\eta> \max\{ \frac{r_{\min}}{2}, \frac{r_{\max}}{n}\}$,
 $E_1$ is finite-time robustly reachable from $[0,1]^n$ under protocol
(\ref{m1_new}).
\end{lem}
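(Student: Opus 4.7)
The plan is to combine Lemma \ref{lem1} (which funnels the state into a small neighborhood of the midpoint) with a subsequent constant ``push-apart'' control that separates a chosen ``min-confidence'' agent from the rest. Fix $i^* \in \arg\min_{i \in \mathcal{V}} r_i$, so $r_{i^*} = r_{\min} < 1$, and choose $\delta, \alpha > 0$ small enough that $2\eta - 2\delta - 2\alpha > r_{\min}$ (available from $\eta > r_{\min}/2$) and $\eta > r_{\max}/n + 2\delta$ (available from $\eta > r_{\max}/n$). First, apply Lemma \ref{lem1} to bring the state into $S_{1/2,\alpha}$ in bounded time, robustly from any initial state in $[0,1]^n$. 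Then set $\delta_i(t) := \delta$ for all $i,t$ and apply the constant controls $u_{i^*}(t) := \eta - \delta$ and $u_j(t) := -(\eta-\delta)$ for $j \neq i^*$, for a fixed number of further steps $T$.

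\textbf{Analysis.} The worst-case (adversarial $b_i$) bounds at the first push-apart step give $x_{i^*}(1) \geq \tfrac{1}{2} + \eta - 2\delta - \alpha$ and $x_j(1) \leq \tfrac{1}{2} - \eta + 2\delta + \alpha$ for $j \neq i^*$; hence $x_{i^*}(1) - x_j(1) > r_{\min}$ and $\mathcal{N}_{i^*}(1) = \{i^*\}$. Since $x_{i^*}$ will only increase and $\max_{j\neq i^*} x_j$ will only decrease (see next), agent $i^*$ stays isolated, giving $x_{i^*}(t+1) \geq \min\{x_{i^*}(t)+\eta-2\delta,\,1\}$, so $x_{i^*}$ hits $1$ within $O(1/\eta)$ further steps. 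For $M(t) := \max_{j\neq i^*} x_j(t)$ the central claim is $M(t+1) \leq \max\{M(t) - (\eta - r_{\max}/n - 2\delta),\,0\}$: fix a $j$ attaining $M(t+1)$; if $i^* \notin \mathcal{N}_j(t)$, then $\mathrm{avg}_j(t) \leq M(t)$ and the claim follows from $u_j+b_j \leq -(\eta-2\delta)$; if $i^* \in \mathcal{N}_j(t)$, then $x_{i^*}(t) - M(t) \leq x_{i^*}(t) - x_j(t) \leq r_j \leq r_{\max}$ and, under the inductive invariant below that $|\mathcal{N}_j(t)| \geq n-1$, one gets $\mathrm{avg}_j(t) \leq M(t) + r_{\max}/n$, so once again $M$ decreases by $\eta - r_{\max}/n - 2\delta > 0$. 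After $T = O(1/(\eta - r_{\max}/n))$ further steps, $x_{i^*}(T) = 1$ and $M(T) = 0$, so $d_{\mathcal{V}}(T) = 1$ and $x(T) \in E_1$, giving the desired robust reachability.

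\textbf{Main obstacle.} The hardest point is sustaining the inductive invariant that the non-$i^*$ agents remain clustered enough that every $j \neq i^*$ has at least $n-1$ agents in its confidence neighborhood. The difficulty arises in the \emph{mixed} regime in which some non-$i^*$ agents see $i^*$ while others do not, producing two different per-agent averages among the non-$i^*$ group. A direct calculation shows those two averages differ by exactly $(x_{i^*}(t) - \bar{x}^{(-)}(t))/n$, bounded by $(r_{\max} + \mathrm{spread})/n$; combined with $\eta > r_{\max}/n$ and sufficiently small $\alpha,\delta$, an induction controls the per-step spread growth and keeps it below $\min_{j \neq i^*} r_j$, which validates the invariant and closes the argument.
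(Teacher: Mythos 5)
Your overall architecture (cluster the agents via Lemma \ref{lem1}, then push one minimal-radius agent up and everyone else down) matches the paper's, but there is a genuine gap at exactly the point you flag as the ``main obstacle,'' and the resolution you assert there does not hold. After the first push-apart step the gap $x_{i^*}-x_j$ exceeds $r_{\min}=r_{i^*}$, so $i^*$ is isolated \emph{from its own viewpoint}, but every agent $j$ with $r_j$ larger than the current gap still has $i^*$ in $\mathcal{N}_j$. This mixed regime persists until the gap exceeds $r_{\max}$; since the gap grows by at most $2\eta-r_{\max}/n$ per step, the mixed regime can last on the order of $(r_{\max}-r_{\min})/(2\eta-r_{\max}/n)$ steps, which under the hypothesis $\eta>\max\{r_{\min}/2,\,r_{\max}/n\}$ can be as large as order $n$. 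During each such step your own calculation shows the spread of the non-$i^*$ group can grow by roughly $r_{\max}/n+2\delta$, so the cumulative growth can be of order $r_{\max}$ --- far above $\min_{j\neq i^*}r_j$, which may equal $r_{\min}$ (e.g.\ $r_{\min}=0.1$, $r_{\max}=1$, $n=15$). Once the non-$i^*$ group fragments, an agent $j$ that still sees $i^*$ but has only a small neighborhood has $\mathrm{avg}_j\leq M+r_j/|\mathcal{N}_j|$, and with $|\mathcal{N}_j|=2$ you would need $\eta>r_{\max}/2$ to keep pushing it down; such an agent can instead follow $i^*$ upward, so the monotone decrease of $M(t)$, and with it the whole induction, collapses. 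Choosing $\alpha,\delta$ small does not help, because the dominant growth term $r_{\max}/n$ per step is intrinsic, not a function of your free parameters.

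The paper's proof avoids this transient entirely by a different choice of clustering point: it steers the state into $S_{\frac{K+2}{2K}r_{\min},\,\frac{r_{\min}}{K}}$, i.e.\ a tiny neighborhood of $\approx r_{\min}/2$ rather than of $1/2$. Then a single push-apart step sends every non-distinguished agent to a value $\leq \frac{r_{\min}}{2}+\frac{2r_{\min}}{K}-\eta+\frac{2\eta}{K}<0$, which the projection $\Pi_{[0,1]}$ clamps to exactly $0$ --- this is precisely where $\eta>r_{\min}/2$ is used. From then on all non-distinguished agents are identical, so there is no spread to control: they are all mutual neighbors, the only upward pull on any of them is $x_1(t)/n\cdot I_{\{x_1(t)\leq r_i\}}\leq r_{\max}/n<\eta$ --- precisely where $\eta>r_{\max}/n$ is used --- and the downward control keeps them pinned at $0$ while agent $1$, permanently isolated, marches to $1$. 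If you want to salvage your write-up, replace the target $1/2$ by a point close enough to the boundary that the projection does this collapsing for you; as written, the inductive invariant you rely on is unproven and false in general.
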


\begin{lem}\label{lem3}
For any constant $\varepsilon\in(0,\eta)$, let $c_{\varepsilon}=\min\{2\eta-2\varepsilon,1\}$.
Then $E_{c_{\varepsilon}}$ is finite-time robustly reachable from $[0,1]^n$ under protocols
(\ref{m1_new}) and (\ref{m3_new}).
\end{lem}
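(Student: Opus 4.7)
The plan is an open-loop two-phase control strategy. I would set $\varepsilon' := \varepsilon/2$ and take $\delta_i(t) \equiv \varepsilon'$ throughout, so that $u_i(t) \in [-(\eta-\varepsilon'),\,\eta-\varepsilon']$, $b_i(t) \in [-\varepsilon',\varepsilon']$, and the worst-case net per-step push is $\eta - 2\varepsilon' = \eta - \varepsilon > 0$. The analysis rests on the invariant $\tilde{x}_i(t) \in [m(t),M(t)]$, where $m(t) := \min_j x_j(t)$ and $M(t) := \max_j x_j(t)$: this is immediate for (\ref{m1_new}), and for (\ref{m3_new}) it holds because both $x_{\rm ave}(t)$ and the neighbor-average lie in $[m(t),M(t)]$, so the same argument handles both protocols.

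\emph{Phase 1 (concentration).} Apply $u_i(t) \equiv -(\eta-\varepsilon')$ for every $i$. Then $x_i(t+1) \leq \Pi_{[0,1]}(M(t)-(\eta-\varepsilon))$, so $M(t+1) \leq \max\{0,\,M(t)-(\eta-\varepsilon)\}$; after $T_1 := \lceil 1/(\eta-\varepsilon)\rceil$ steps, $x_i(T_1) = 0$ for every $i$, regardless of the adversary.

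\emph{Phase 2 (separation).} Pick a partition $\mathcal{V} = U \cup D$ with $D = \{1\}$ and apply $u_i \equiv +(\eta-\varepsilon')$ on $U$ and $u_i \equiv -(\eta-\varepsilon')$ on $D$. The worst-case one-step bounds
\[
\min_{i\in U} x_i(t+1) \geq \Pi_{[0,1]}\bigl(m(t)+(\eta-\varepsilon)\bigr),\quad \max_{j\in D} x_j(t+1) \leq \Pi_{[0,1]}\bigl(M(t)-(\eta-\varepsilon)\bigr),
\]
combined with an induction starting from the zero state, show that $D$ remains pinned at $0$ as long as $x_{\rm ave}(t) \leq \eta-\varepsilon$, while $x_{\rm ave}(t)$ grows monotonically from $0$ at a geometric rate depending only on $n,\eta,\varepsilon$. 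At the first time $\tau$ when $x_{\rm ave}(\tau)$ enters the trapping interval $[\eta-\varepsilon,\,1-(\eta-\varepsilon)]$ (or $[1-(\eta-\varepsilon),\,\eta-\varepsilon]$ when $2(\eta-\varepsilon)>1$), a direct calculation shows $d_{\mathcal{V}}(\tau+1) \geq \min\{1,\,2(\eta-\varepsilon)\} = c_\varepsilon$, with $\tau$ bounded uniformly by some $T_2 = T_2(n,\eta,\varepsilon)$.

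\emph{Main obstacle.} The hardest case is when all confidence bounds $r_i$ are large enough that the entire population remains a single cluster throughout Phase 2, forcing $\tilde{x}_i(t) \equiv x_{\rm ave}(t)$ for every $i$, so that the averaging strongly couples $U$ with $D$. The trapping-interval argument resolves this by exploiting $\Pi_{[0,1]}$ at the right moment: when $c_\varepsilon = 1$, both extremes saturate simultaneously at $0$ and $1$; when $c_\varepsilon < 1$, both extremes remain interior and the pre-projection spread of $2(\eta-\varepsilon)$ is preserved. For (\ref{m3_new}) no new ingredient is needed because the additional convex combination with $x_{\rm ave}$ preserves $\tilde{x}_i \in [m(t),M(t)]$, so the Phase 1 and Phase 2 estimates carry over verbatim.
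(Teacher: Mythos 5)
Your Phase 1 and your general invariant $\widetilde{x}_i(t)\in[m(t),M(t)]$ are fine, but Phase 2 has a genuine gap: with the one-versus-rest partition $D=\{1\}$ started from the all-zero state and constant open-loop pushes, the separation need never reach $c_\varepsilon$. The problem is exactly the case you flag as the main obstacle (all $r_i$ large, so $\widetilde{x}_i(t)\equiv x_{\rm ave}(t)$ for all $i$): your argument requires $x_{\rm ave}(\tau)$ to land in the trapping interval $[\eta-\varepsilon,\,1-(\eta-\varepsilon)]$, but the per-step increment of the average near the lower endpoint is about $\tfrac{n-2}{n}(\eta-\varepsilon)$, which exceeds the interval's width $1-2(\eta-\varepsilon)$ whenever $\eta-\varepsilon>\tfrac{n}{3n-2}$; the average then jumps over the interval. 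Once above it, the $n-1$ top agents saturate at $1$ and the lone bottom agent is dragged upward by averaging against them, so the gap contracts to the fixed point $\tfrac{n}{n-1}(\eta-\varepsilon)$, which is strictly less than $c_\varepsilon=2(\eta-\varepsilon)$ for $n\geq 3$. Concretely, take $n=3$ and $\eta-\varepsilon=0.45$: the trapping interval is $[0.45,0.55]$, the average jumps from about $0.44$ to about $0.59$, and the separation thereafter decreases monotonically to $0.675<0.9=c_\varepsilon$. So the strategy does not merely lack detail; it fails for a nonempty parameter range, and the failure is structural (the asymmetric partition makes the averaging term pull the single low agent toward the majority).

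The paper avoids this entirely by first steering the whole cluster to the \emph{center} $z'=1/2$ (its Lemma \ref{lem1}, which is a steering result to an arbitrary target $S_{z',\alpha'}$, not just to $0$), so that $\widetilde{x}_1(t_1)=\widetilde{x}_2(t_1)\approx 1/2$, and then applying a \emph{single} step of opposite pushes $u_1=\eta-\varepsilon/4$, $u_2=-\eta+\varepsilon/4$ to two designated agents. Because both agents separate from the same reference point $1/2$, either neither projection clips (separation $2\eta-2\varepsilon$) or both clip simultaneously at $1$ and $0$ (separation $1$); there is no multi-step feedback loop whose drift must be controlled. If you want to salvage your two-phase scheme, you would need either to recenter at $1/2$ before separating (which is essentially the paper's route) or to use a balanced partition so the average stays fixed; as written, Phase 2 does not prove the lemma.
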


\begin{lem}\label{lem4}
Suppose $r_{\min}<1$ and $\eta>\frac{r_{\min}}{2}$. For any constant $\varepsilon>0$, let
$$c_{\varepsilon}:=\min\Big\{\frac{n\eta}{(n-1)\underline{\omega}_{\eta}}-\varepsilon,n\eta-\varepsilon,1\Big\},$$
then
 $E_{c_\varepsilon}$ is finite-time robustly reachable from $[0,1]^n$ under protocol
(\ref{m3_new}).
\end{lem}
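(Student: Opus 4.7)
The plan is to adapt the ``transform-stochastic-into-control'' framework of Lemma~\ref{robust}: given $\varepsilon>0$, I will construct a finite horizon $T$ and an open-loop schedule $(u_i(t),\delta_i(t))$ that drives (\ref{m3_new}) into $E_{c_\varepsilon}$ in time $T$, from every $x(0)\in[0,1]^n$ and for every noise realisation $b_i(t)\in[-\delta_i(t),\delta_i(t)]$. By Lemma~\ref{lem1} I first drive the state into $S_{0,\alpha}$ for a small $\alpha>0$ to be chosen, so without loss of generality we start from a tight cluster near $0$. Fix $i^*\in\arg\min\{\omega_i:r_i<2\eta\}$, which gives $\omega_{i^*}=\underline{\omega}_\eta$ and $r_{i^*}<2\eta$ simultaneously; pick an integer $k\in(n/2,n]$ and a helper set $H\subset\mathcal V\setminus\{i^*\}$ with $|H|=k-1$ (possible since $n\ge3$).

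\emph{Phase A (prime the global average).} For $T_A$ steps apply $u_i=\eta-\delta$ for $i\in H\cup\{i^*\}$ and $u_j=-\eta+\delta$ for the remaining $n-k$ agents. While the cluster diameter stays below $\min_i r_i$, every neighborhood $\mathcal N_i(t)$ equals $\mathcal V$, so $\widetilde x_i(t)=x_{\rm{ave}}(t)$ and the dynamics of (\ref{m3_new}) reduce to $x_i(t+1)=\Pi_{[0,1]}\bigl(x_{\rm{ave}}(t)+u_i(t)+b_i(t)\bigr)$. A direct induction, valid as long as $x_{\rm{ave}}(t)\le\eta-2\delta$, shows that the $n-k$ ``bottom'' agents are pinned at $0$ by the projection while the $k$ ``top'' agents rise in lockstep toward the common value $y_A=n(\eta-\delta)/(n-k)$. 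I stop Phase A at the last time $T_A$ for which the pinning induction still applies; the choice $k>n/2$ ensures that by then $x_{\rm{ave}}(T_A)\approx ky_A/n$ has crossed $\eta-\delta$.

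\emph{Phase B (isolate $i^*$).} From time $T_A$ onwards switch to $u_{i^*}=\eta-\delta$ and $u_j=-\eta+\delta$ for every $j\neq i^*$. One step later the helpers in $H$ have dropped by $\approx\eta-\delta$ while $x_{i^*}$ has risen by $\approx\eta-\delta$, so the distance between $x_{i^*}$ and every other coordinate reaches $2\eta-O(\delta)>r_{i^*}$, using $r_{i^*}<2\eta$. Thereafter $\mathcal N_{i^*}(t)=\{i^*\}$ and (\ref{m3_new}) collapses to
\begin{equation*}
x_{i^*}(t+1)=\Pi_{[0,1]}\!\Big(x_{i^*}(t)-\tfrac{(n-1)\omega_{i^*}}{n}\bigl(x_{i^*}(t)-\bar y(t)\bigr)+\eta-\delta+b_{i^*}(t)\Big),
\end{equation*}
where $\bar y(t)$ is the mean of the other coordinates. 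With worst-case $b_{i^*}=-\delta$ this recursion makes $x_{i^*}$ monotone non-decreasing with unprojected fixed point $n(\eta-\delta)/\bigl((n-1)\underline{\omega}_\eta\bigr)$; the $[0,1]$-projection caps it at $1$, and the other coordinates stay at $0$ provided $x_{i^*}\le n(\eta-2\delta)$. Taking $T_B$ large and $\delta$ small in terms of $\varepsilon$, at time $T_A+T_B$ the coordinate $x_{i^*}$ matches each of the three thresholds defining $c_\varepsilon$ up to $\varepsilon$ while at least one other coordinate sits at $0$, yielding $d_\mathcal V\ge c_\varepsilon$ and so $x\in E_{c_\varepsilon}$.

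\emph{Main obstacle.} The hard part will be the coupled tuning of $k,T_A,T_B,\delta$: the Phase-A pinning induction $x_{\rm{ave}}(t)\le\eta-2\delta$ caps how long Phase A can run, but Phase B only succeeds if $x_{\rm{ave}}(T_A)$ has already crossed $\eta-\delta$ (forcing $k>n/2$), and the one-step separation past $r_{i^*}$ at the beginning of Phase B forces $\delta$ below an explicit fraction of $2\eta-r_{i^*}$. During Phase B the helpers and the pinned agents may transiently leave $0$ before the projection re-locks them, so the induction on the time-varying neighborhoods $\mathcal N_i(t)$ has to track these excursions carefully. Verifying that a single consistent choice of $k,T_A,T_B,\delta$ attains all three thresholds of $c_\varepsilon$ simultaneously, robustly over the worst-case noise and for every $x(0)\in[0,1]^n$, is where most of the proof lives; it rests on $n\ge3$, $r_{i^*}<2\eta$, and $\omega_{i^*}<1$.
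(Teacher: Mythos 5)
Your Phase B is the right mechanism and is essentially the paper's proof: isolate the agent $i^*$ attaining $\omega_{i^*}=\underline{\omega}_{\eta}$ with $r_{i^*}<2\eta$, push it upward against the drag $\frac{(n-1)\omega_{i^*}}{n}(x_{i^*}-\bar y)$ created by the global-average term while the projection pins everyone else at $0$, and read off the three ceilings: the fixed point $\frac{n\eta}{(n-1)\underline{\omega}_{\eta}}$ of the isolated recursion, the value $n\eta$ beyond which $\widetilde x_j\leq x_{i^*}/n$ no longer keeps the others projected to $0$, and the cap $1$. The genuine gap is in how you reach the launch configuration. Phase A is internally inconsistent: your pinning induction requires $x_{\rm{ave}}(t)\leq \eta-2\delta$ up to $T_A$, so $x_{\rm{ave}}(T_A)$ cannot simultaneously have ``crossed $\eta-\delta$'' as you say Phase B requires; the two conditions exclude each other, and you flag the tension without resolving it. Worse, the reduction $\widetilde x_i(t)=x_{\rm{ave}}(t)$ holds only while the diameter stays below $\min_i r_i$, but during Phase A the gap between the raised agents and the pinned agents grows toward $x_{\rm{ave}}+\eta-\delta\approx 2\eta>r_{\min}$, so the complete-graph assumption necessarily fails strictly before Phase A reaches its target; from then on neighborhoods fragment heterogeneously (agents with small $r_i$ drop the opposite cluster first) and neither the lockstep value $y_A$ nor the timing of the average's rise is justified. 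Finally, the nonzero residual heights of the helpers at the start of Phase B invalidate the ``all others at $0$'' premise on which your pinning condition $x_{i^*}\leq n(\eta-2\delta)$ and the bound $\widetilde x_j\leq x_{i^*}/n$ rest.

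All of this is avoidable, and the fix is the one the paper uses: Lemma \ref{lem1} lets you cluster at \emph{any} target $z'$, so cluster just above $r_{i^*}/2$ (the paper reaches $S_{\frac{K+2}{2K}r_{i^*},\,r_{i^*}/K}$, giving $\widetilde x_1(t_1)=\cdots=\widetilde x_n(t_1)\in[\frac{r_{i^*}}{2},\frac{r_{i^*}}{2}+\frac{2r_{i^*}}{K}]$) rather than near $0$. Then one step with $u_{i^*}=\eta-\eta/K$ and $u_j=-\eta+\eta/K$ for $j\neq i^*$ puts $x_{i^*}$ at least $\frac{r_{i^*}}{2}+\eta-\frac{2\eta}{K}>r_{i^*}$ --- this is precisely where $r_{i^*}<2\eta$ enters --- and sends every other agent to $\max\{\frac{r_{i^*}}{2}+\frac{2r_{i^*}}{K}-\eta+\frac{2\eta}{K},0\}=0$ for $K\geq \frac{4(r_{i^*}+\eta)}{2\eta-r_{i^*}}$. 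So $i^*$ is isolated immediately and your Phase B recursion starts from a clean state with all other coordinates exactly $0$; no majority set $H$, no Phase A, and no transient excursions to control. Delete Phase A, replace $S_{0,\alpha}$ by this cluster, and keep Phase B.
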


The proofs of Lemmas \ref{lem1}-\ref{lem4} are postponed to Appendix \ref{Proof_Lemmas_a}.

\begin{proof}[Proof of Theorem \ref{theorem_1}]
 By Lemma \ref{lem1},  $S_{\frac{1}{2},\frac{r_{\min}}{2}}$ is finite-time robustly reachable from $[0,1]^n$ under protocol
(\ref{m1_new}), then by   Lemma \ref{robust}, under system (\ref{m1}) a.s. there exists
 a finite time $t_1$ such that  $S_{\frac{1}{2},\frac{r_{\min}}{2}}$ is reached at time $t_1$, which indicates $d_{\mathcal{V}}(t_1)<r_{\min}$.
From this and (\ref{lem1_0}) we get $\widetilde{x}_1(t_1)=\widetilde{x}_2(t_1)=\cdots=\widetilde{x}_n(t_1)$, and so $d_{\mathcal{V}}(t_1+1)\leq 2\eta$.
Thus, if $\eta\leq \frac{r_{\min}}{2}$, we have $d_{\mathcal{V}}(t_1+1)\leq r_{\min}$. Repeating this process we get
\begin{eqnarray}\label{pf_thm1_1}
d_{\mathcal{V}}(t)\leq 2\eta,~~~~\forall t>t_1
\end{eqnarray}
for $\eta\leq \frac{r_{\min}}{2}$.

Also, for any small constant $\varepsilon>0$, by Lemmas \ref{robust} and \ref{lem3} we get a.s.
 $E_{2\eta-\varepsilon}$ is reached in finite time under protocol (\ref{m1}), so making $\varepsilon$ tend to zero we have
\begin{eqnarray}\label{pf_thm1_2}
\overline{d}_{\mathcal{V}}\geq 2\eta ~~~~\mbox{a.s.}
 \end{eqnarray}

For the case when $\eta\leq \frac{r_{\min}}{2}$, by (\ref{pf_thm1_1}) and (\ref{pf_thm1_2}) we have $\overline{d}_{\mathcal{V}}=2\eta$ a.s. For the case when $\eta> \max\{ \frac{r_{\min}}{2}, \frac{r_{\max}}{n}\}$, by Lemmas \ref{robust} and \ref{lem2} we can get $\overline{d}_{\mathcal{V}}= 1$ a.s.

It remains to prove (\ref{theo1_01}).  Using Lemma \ref{lem1} again, we have $S_{\frac{1}{2},\frac{\alpha}{2}}$ is finite-time robustly reachable from $[0,1]^n$ under protocol (\ref{m1_new}). Also, by  Lemmas \ref{lem2} and \ref{lem3} we have $E_{c_{\alpha}^1}$ is finite-time robustly reachable from $[0,1]^n$ under protocol (\ref{m1_new}). Combining these with Lemma \ref{robust} (ii)  yields (\ref{theo1_01}).
\end{proof}

\begin{proof}[Proof of Corollary \ref{cor_thm_1}]
If $\eta\leq \frac{r_{\min}}{2}$, by (\ref{theo1_00}) we have $\overline{d}_{\mathcal{V}}=2\eta\leq r_{\min}$ a.s.,
which means the system asymptotically reaches quasi-synchronization a.s. by Definition \ref{robconsen}.
If $\eta>\frac{r_{\min}}{2}$, by (\ref{theo1_00}) we get a.s. $\overline{d}_{\mathcal{V}}\geq 2\eta >r_{\min}$ a.s.  Thus, the system asymptotically reaches quasi-synchronization a.s.
if and only if $\eta \leq r_{\min}/2$.

For the case of $\eta\leq \frac{r_{\min}}{2}$,
using (\ref{theo1_01}) with $\alpha=2\eta$ there are constants $\lambda_2\in(0,1)$ and $\Lambda_2>0$ such that
\begin{eqnarray*}
\begin{aligned}
P\left(\tau> s \right) \leq (1-\lambda_2)^{\lfloor s/\Lambda_2\rfloor}, ~~\forall s\geq 0.
\end{aligned}
\end{eqnarray*}
Also, by (\ref{pf_thm1_1}) but using $\tau$ instead of $t_1$ we have $d_{\mathcal{V}}(t)\leq 2\eta$ for all $t>\tau$.
\end{proof}

\begin{proof}[Proof of Theorem \ref{theorem_2}]
If $\eta\leq \frac{r_{\min}}{2}$, with the same discussion as the proof of Theorem \ref{theorem_1} we can get $\overline{d}_{\mathcal{V}}= 2\eta$ a.s..
If $\eta> \frac{r_{\min}}{2}$, for any small constant $\varepsilon>0$, by Lemmas \ref{robust}, \ref{lem3}, and \ref{lem4} we get a.s.
 $E_{\min\{c_{\eta}^2-\varepsilon,1\}}$ is reached in finite time under protocol (\ref{m3}), so making $\varepsilon$ tend to zero we have $\overline{d}_{\mathcal{V}}\geq \min\{c_{\eta}^2,1\}$ a.s..

Similar to the proof of (\ref{theo1_01}) we can get (\ref{theo2_01}).
\end{proof}

\begin{proof}[Proof of Corollary \ref{cor_thm_2}]
i) Immediately from (\ref{theo2_00}) and Definition \ref{robconsen}. \\
ii) Similar to the proof of Corollary \ref{cor_thm_1}.
\end{proof}

\subsection{Proofs of Theorems \ref{theorem_2c} and \ref{prop_1}}\label{PT_2c}
As a convenience, this subsection also provides some preparations to analyze the system (\ref{m4}) besides the system (\ref{m6}).
Similar to Subsection \ref{PT_1c}, we construct two robust control systems, which can transform the analysis of systems (\ref{m4}) and (\ref{m6})
to the design of control algorithms. For $t\geq 0$, $i\in\mathcal{V}$, and  $j\in\mathcal{N}_i(t)\backslash\{i\}$, let $\delta_i(t)\in(0,\eta)$ be an arbitrarily given real number,
$u_{ji}(t)\in [-\eta+\delta_i(t),\eta-\delta_i(t)]$ denotes a bounded control input, and $b_{ji}(t)\in [-\delta_i(t),\delta_i(t)]$ denotes the
parameter uncertainty.

Similar to (\ref{m1_new}) and (\ref{m3_new}),
for systems (\ref{m4}) and (\ref{m6}) we construct the control systems
\begin{eqnarray}\label{m4_new}
&&x_i(t+1)\nonumber\\
&&=\Pi_{[0,1]}\Big( |\mathcal{N}_i(t)|^{-1}\Big[x_i(t)\nonumber\\
&&~~~~+\sum\limits_{j\in \mathcal{N}_i(t)\backslash\{i\}}(x_j(t)+u_{ji}(t)+b_{ji}(t))\Big]\Big)\nonumber\\
&&=\Pi_{[0,1]}\Big(\widetilde{x}_i(t)+ |\mathcal{N}_i(t)|^{-1}\sum\limits_{j\in \mathcal{N}_i(t)\backslash\{i\}}(u_{ji}(t)+b_{ji}(t))\Big),\nonumber\\
&&~~~~~\forall i\in\mathcal{V},t\geq 0,
\end{eqnarray}
and
\begin{eqnarray}\label{m6_new}
 && x_i(t+1)\nonumber\\
 &&= \Pi_{[0,1]}\Big(  \omega_i x_{\rm{ave}}(t)+ \frac{1-\omega_i}{|\mathcal{N}_i(t)|}\Big[x_i(t)\nonumber\\
 &&~~~~+\sum\limits_{j\in \mathcal{N}_i(t)\backslash\{i\}}(x_j(t)+u_{ji}(t)+b_{ji}(t))\Big]\Big)\nonumber\\
  &&=\Pi_{[0,1]}\Big(\widetilde{x}_i(t)+\frac{1-\omega_i}{|\mathcal{N}_i(t)|}\sum\limits_{j\in \mathcal{N}_i(t)\backslash\{i\}}(u_{ji}(t)+b_{ji}(t))\Big),\nonumber\\
 && ~~~~~\forall i\in\mathcal{V},t\geq 0
\end{eqnarray}
respectively, where the last lines of (\ref{m4_new}) and (\ref{m6_new}) use (\ref{lem1_0}).

Similar to Definition \ref{def_reach},  we define the robust reachability  for systems (\ref{m4_new}) and (\ref{m6_new}) as follows:

\begin{defn}\label{def_reach2}
Let  $ S_1, S_2\subseteq  [0,1]^n$ be two state sets.
Under protocol (\ref{m4_new}) (or (\ref{m6_new})), $S_1$ is said to be finite-time robustly reachable from $S_2$ if: For any $x(0) \in S_2$, $S_1$ is reached at time $0$,
or there exist constants $T>0$ and $\varepsilon\in(0,\eta)$ such that we can find $\delta_i(t)\in[\varepsilon,\eta)$ and $u_{ji}(t)\in [-\eta+\delta_i(t),\eta-\delta_i(t)]$, $0\leq t<T$, $i\in\mathcal{V}$, $j\in\mathcal{N}_i(t)\backslash\{i\}$,  which guarantees that
$ S_1$ is reached in the time $[1,T]$ for arbitrary $b_{ji}(t)\in [-\delta_i(t),\delta_i(t)]$,  $0\leq t<T$,
$i\in\mathcal{V}$, $j\in\mathcal{N}_i(t)\backslash\{i\}$.
\end{defn}

Similar to Lemma \ref{robust} we get the following lemma:
\begin{lem}\label{robust2}
Assume (A2) holds.
Let $ S_1,\ldots,  S_k\subseteq  [0,1]^n$, $k\geq 1$ be state sets and assume they are finite-time robustly reachable from $[0,1]^n$
under protocol (\ref{m4_new}) (or (\ref{m6_new})). Suppose the initial opinions $x(0)$ are arbitrarily given. Then for system (\ref{m4}) (or (\ref{m6})):\\
(i)~ With probability $1$  there exists an infinite sequence $t_1<t_2<\ldots$ such that $ S_j$ is reached at time
$t_{lk+j}$ for all $j=1,\ldots,k$ and $l\geq 0$.\\
(ii)~ There exist constants $T>0$ and $c\in (0,1)$ such that
$$P\left(\tau_i-\tau_{i-1}>t\right)\leq c^{\lfloor t/T\rfloor}, \forall i,t\geq 1,$$
where $\tau_0=0$ and
$\tau_i:=\min\{s: \mbox{ there exist }\tau_{i-1}<t_1'<t_2'<\cdots<t_k'=s \mbox{ such that for all }j\in[1,k], S_j \mbox{ is reached at time }t_j'\}$ for $i\geq 1$.
\end{lem}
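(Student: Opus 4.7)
The plan is to mirror the proof of Lemma \ref{robust}, replacing the environment-noise bound from (A1) by the communication-noise bound from (A2), and accounting for the fact that the control and uncertainty variables are now indexed by the edge set $\mathcal{E}(t)$ rather than by vertices. First, for each robustly reachable set $S_j$, Definition \ref{def_reach2} yields constants $T_j\geq 2$ and $\varepsilon_j\in(0,\eta)$ such that, starting from any $x(t-1)\in[0,1]^n$ and setting $x(t)$ to be the next state (whatever it is), one can choose $\delta_i(t')\in[\varepsilon_j,\eta)$ and controls $u_{ji}(t')\in[-\eta+\delta_i(t'),\eta-\delta_i(t')]$ for $t\leq t'\leq t+T_j-2$, $i\in\mathcal{V}$, $j\in\mathcal{N}_i(t')\setminus\{i\}$, that guarantee $S_j$ is reached in $[t+1,t+T_j-1]$ for any admissible uncertainties $b_{ji}(t')\in[-\delta_i(t'),\delta_i(t')]$.

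Next, I would translate robust reachability into a probabilistic lower bound for the noisy system (\ref{m4}) (resp.\ (\ref{m6})). The key observation is that on the event
\begin{equation*}
\bigcap_{t'=t}^{t+T_j-2}\bigcap_{(i,j)\in\mathcal{E}(t')}\bigl\{\zeta_{ji}(t')\in[u_{ji}(t')-\delta_i(t'),u_{ji}(t')+\delta_i(t')]\bigr\},
\end{equation*}
the actual trajectory of (\ref{m4}) (resp.\ (\ref{m6})) coincides with an admissible trajectory of the control system (\ref{m4_new}) (resp.\ (\ref{m6_new})) with the chosen $u_{ji}(t')$ and some $b_{ji}(t')\in[-\delta_i(t'),\delta_i(t')]$, so $S_j$ must be reached in $[t+1,t+T_j-1]$. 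At time steps when $\mathcal{E}(t')$ is empty the dynamics is purely deterministic, so there is nothing to bound and such steps contribute trivially. For the remaining steps, (A2) gives
\begin{equation*}
P\bigl(E_{j,t}\,\big|\,x(t-1)\bigr)\geq\bigl[\underline{\rho}\,(2\varepsilon_j)^{n(n-1)}\bigr]^{T_j-1}>0,
\end{equation*}
where $E_{j,t}:=\{S_j\text{ is reached in }[t+1,t+T_j-1]\}$, using that $|\mathcal{E}(t')|\leq n(n-1)$ and that on empty $\mathcal{E}(t')$ the product is taken to be $1$.

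Having obtained a uniform positive lower bound for $P(E_{j,t}\mid x(t-1))$, the rest of the argument is identical to that of Lemma \ref{robust}. I would set $T:=T_1+\cdots+T_k$ and $E_t:=\bigcap_{j=1}^{k}E_{j,t+\sum_{l=1}^{j-1}T_l}$, whose conditional probability is bounded below by a positive constant depending only on $n,\eta,\underline{\rho},T_1,\ldots,T_k,\varepsilon_1,\ldots,\varepsilon_k$; this yields $P\bigl(\bigcap_{m=M}^{\infty}E_{mT}^{c}\bigr)=0$ for every $M>0$, proving (i). For (ii), the same conditional lower bound implies $P(\tau_i-\tau_{i-1}>mT\mid\mathcal{F}^{(i-1)T})\leq c^{m}$ for a constant $c\in(0,1)$, from which the geometric tail bound follows.

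The only delicate point, and the step I would pay most attention to, is verifying that (A2) can be applied one time step at a time: since $\mathcal{E}(t')$ is itself state-dependent, one must condition on the filtration $\mathcal{F}^{t'-1}$ (under which $x(t')$ and hence $\mathcal{E}(t')$ are measurable) before invoking (A2), and then compose the per-step bounds by the tower property. This is exactly the analogue of the chaining argument leading to (11) in \cite{Chen2017}, adapted to edge-indexed noise; no new phenomenon appears.
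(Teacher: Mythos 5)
Your proposal is correct and follows essentially the same route as the paper, which proves this lemma only by reference to Lemma \ref{robust}: you adapt that argument by replacing the vertex-indexed bound from (A1) with the edge-indexed bound from (A2), and your added care about empty $\mathcal{E}(t')$ (deterministic steps) and conditioning on the filtration before invoking (A2) is exactly the right way to fill in what the paper leaves implicit.
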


Similar to Lemmas \ref{lem1}, \ref{lem3} and \ref{lem4}  we can get
\begin{lem}\label{lem1_a}
For any constants $\bar{z}\in [0,1]$ and $\bar{\alpha}>0$, $S_{\bar{z},\bar{\alpha}}$ is finite-time robustly reachable from $[0,1]^n$ under protocol
(\ref{m6_new}).
\end{lem}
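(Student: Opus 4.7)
My plan is to adapt the two-phase driving strategy used for Lemmas~\ref{lem1}, \ref{lem3}, and \ref{lem4} to the communication-noise setting of protocol~(\ref{m6_new}). The essential new wrinkle is that an isolated agent (one with $|\mathcal{N}_i(t)|=1$) receives no control input at all, so one must rely on the global-average term $\omega_i x_{\rm{ave}}(t)$, with $\omega_i>0$, to move its opinion. Since $\omega_{\min}:=\min_i\omega_i>0$ and $r_{\min}>0$, both the direct-control pull on non-isolated agents and the global-average pull on isolated agents admit uniform positive lower bounds, which is what makes finite-time reachability go through.

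In Phase~1 I would drive $M(t):=\max_i x_i(t)$ down to $0$. Fix a small $\varepsilon\in(0,\eta/2)$, set $\delta_i(t)\equiv\varepsilon$, and apply the saturating negative control $u_{ji}(t)=-\eta+\varepsilon$ to every pair $(i,j)$ with $j\in\mathcal{N}_i(t)\setminus\{i\}$. I claim that $M(t+1)\le M(t)-c$ for a uniform constant $c>0$ whenever $M(t)>0$. For each non-isolated $i$, the control term in~(\ref{m6_new}) contributes at most $-(1-\omega_{\max})(\eta-2\varepsilon)/n$ while $\widetilde{x}_i(t)\le M(t)$, so $x_i(t+1)\le M(t)-(1-\omega_{\max})(\eta-2\varepsilon)/n$. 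For each isolated $i$, isolation forces every other opinion to lie below $x_i(t)-r_{\min}$; if $i$ is the argmax this yields $x_{\rm{ave}}(t)\le M(t)-(n-1)r_{\min}/n$ and $x_i(t+1)\le M(t)-\omega_{\min}(n-1)r_{\min}/n$, while otherwise $x_i(t)\le M(t)-r_{\min}$ gives $x_i(t+1)\le M(t)-(1-\omega_{\max})r_{\min}$. Taking $c$ to be the minimum of these three decrements, the projection onto $[0,1]$ collapses all opinions to $0$ after at most $T_1=\lceil 1/c\rceil$ steps.

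In Phase~2 I would move the common opinion from $0$ up to $\bar z$. Once all opinions coincide at $0$ every pair is a neighbor; moreover $|\mathcal{N}_i(t)|=n$ forces $\widetilde{x}_i(t)=x_{\rm{ave}}(t)$ uniformly in $i$, and no agent becomes isolated as long as the spread stays below $r_{\min}$. Choose an increment $\Delta\in(0,(1-\omega_{\max})(n-1)(\eta-\varepsilon)/n]$ and set $u_{ji}(t)=\frac{n\Delta}{(n-1)(1-\omega_i)}$, which lies in $[-\eta+\varepsilon,\eta-\varepsilon]$. Then $x_i(t+1)=x_{\rm{ave}}(t)+\Delta+e_i(t)$ with $|e_i(t)|\le (1-\omega_i)(n-1)\varepsilon/n<\varepsilon$, so the spread remains below $2\varepsilon<r_{\min}$ while the average advances by $\Delta\pm\varepsilon$; this is the pleasant consequence of everyone being a neighbor, which makes $\widetilde{x}_i(t)$ collapse to a single common value. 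After at most $T_2=\lceil\bar z/\Delta\rceil$ such steps the average is within $T_2\varepsilon$ of $\bar z$, and picking $\varepsilon$ small enough (independent of the uncertainties) makes this offset plus the spread strictly less than $\bar\alpha$, placing $x(T_1+T_2)\in S_{\bar z,\bar\alpha}$. Because $T_1+T_2$ and the floor $\varepsilon$ on $\delta_i(t)$ depend only on $n,\eta,\bar z,\bar\alpha,\{\omega_i\},\{r_i\}$ and not on $x(0)$ or on the adversarial $b_{ji}(t)$, this yields robust reachability in the sense of Definition~\ref{def_reach2}.

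The main obstacle, I expect, will be the Phase~1 bookkeeping when the argmax is isolated: one has to convert the geometric fact ``isolation implies a gap of at least $r_{\min}$'' into a uniform lower bound on $M(t)-x_{\rm{ave}}(t)$, and then leverage $\omega_{\min}>0$ to transfer this gap into an actual decrement of $x_{i^\star}(t+1)$. Without the global-information term, i.e., for the bare communication-noise protocol~(\ref{m4_new}), this mechanism fails, which aligns with the paper's earlier observation that systems~(\ref{m4}) and~(\ref{m6}) behave qualitatively differently.
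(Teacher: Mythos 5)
Your proof is correct, but it takes a different route from the one the paper designates. The paper disposes of Lemma \ref{lem1_a} with the single remark that it is proved ``similarly to Lemma \ref{lem1}'', i.e., by the one-shot saturating control law (\ref{lem1_1}) that pushes each $\widetilde{x}_i(t)$ directly toward the target and lands it exactly once it is within one step's reach, after which $x_{\max}$ and $x_{\min}$ contract monotonically into the target band. You instead use a two-phase scheme (collapse all opinions to $0$, then march the synchronized state up to $\bar z$), which is structurally closer to the paper's proof of Lemma \ref{lem1c} for protocol (\ref{m4_new}) than to that of Lemma \ref{lem1}. The payoff of your version is that it makes explicit the one genuinely new point that the paper's ``similar to Lemma \ref{lem1}'' remark leaves implicit: under protocol (\ref{m6_new}) the controls $u_{ji}$ are attached to edges, so an isolated agent receives no control at all, and one must exploit the global-information term $\omega_i x_{\rm ave}(t)$, together with the gap of at least $r_i$ that isolation forces, to get a uniform per-step decrement for such agents; your three-way bound in Phase~1 does exactly this and is sound. (Note only that your sentence ``isolation forces every other opinion to lie below $x_i(t)-r_{\min}$'' is literally true only when $i$ attains the maximum, which is the only place you use it, and your subsequent case split handles the non-argmax case correctly.) Two small bookkeeping points in Phase~2: with a constant increment $\Delta$ the average can overshoot $\bar z$ by up to $\Delta$, so you should also impose $\Delta<\bar\alpha/2$ (or use a saturating final step as in (\ref{lem1_1})); and the constants must be fixed in the order $\Delta$, then $T_2$, then $\varepsilon$ to avoid circularity. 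Both are trivial to arrange, so the argument meets Definition \ref{def_reach2} with constants independent of $x(0)$ and of the uncertainties $b_{ji}(t)$.
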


\begin{lem}\label{lem3_a}
For any constant $\varepsilon\in(0,\eta)$, let $c_{\varepsilon}:=\min\{\max_{i\neq j}a_{ij}-2\varepsilon,1\}.$
Then $E_{c_{\varepsilon}}$ is finite-time robustly reachable from $[0,1]^n$ under protocol
(\ref{m6_new}).
\end{lem}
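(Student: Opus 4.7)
The plan is to first compress all opinions into a tight cluster around a carefully chosen center $\bar{z}$ by Lemma \ref{lem1_a}, and then, in a single step of protocol (\ref{m6_new}), apply asymmetric controls that drive one distinguished agent up and another down by the maximum allowed amount. Select a pair $(i^{*},j^{*})$ achieving $a_{i^{*}j^{*}}=\max_{i\neq j}a_{ij}$; the remaining agents act merely as communication channels that transmit the controls $u_{ji^{*}}$ and $u_{jj^{*}}$ into $i^{*}$ and $j^{*}$.

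For the first phase, I would choose $\bar{z}$ according to two cases crafted so that the subsequent pushes interact favorably with the projection onto $[0,1]$. If $a_{i^{*}j^{*}}\leq 1$, set $\bar{z}=a_{j^{*}}+(1-a_{i^{*}j^{*}})/2$, so that both $\bar{z}-a_{j^{*}}$ and $\bar{z}+a_{i^{*}}$ lie inside $[0,1]$. If $a_{i^{*}j^{*}}>1$, set $\bar{z}=(1-a_{i^{*}}+a_{j^{*}})/2$, which lies in the nonempty interval $[1-a_{i^{*}},a_{j^{*}}]$ and therefore guarantees $\bar{z}+a_{i^{*}}\geq 1$ and $\bar{z}-a_{j^{*}}\leq 0$. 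Pick a tolerance $\bar{\alpha}>0$ with $2\bar{\alpha}<r_{\min}$ and $\bar{\alpha}<\varepsilon/2$, and invoke Lemma \ref{lem1_a} to drive the system robustly into $S_{\bar{z},\bar{\alpha}}$ in a uniformly bounded number of steps. Inside this set every pair of opinions is within $2\bar{\alpha}<r_{\min}$, hence $\mathcal{N}_{i}(t_{0})=\mathcal{V}$ for every $i$ at the hitting time $t_{0}$, and consequently $\widetilde{x}_{i}(t_{0})=x_{\rm{ave}}(t_{0})$ lies within $\bar{\alpha}$ of $\bar{z}$ for each $i$.

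For the second phase, at $t_{0}$ I would take $\delta_{i}(t_{0})=\delta$ for some small $\delta\in(0,\eta)$, set $u_{ji^{*}}(t_{0})=\eta-\delta$ for every $j\neq i^{*}$ and $u_{jj^{*}}(t_{0})=-\eta+\delta$ for every $j\neq j^{*}$, and pick arbitrary admissible values for all remaining inputs. Because all agents are mutual neighbors at $t_{0}$, the pre-projection update of $i^{*}$ in (\ref{m6_new}) satisfies
\begin{equation*}
\widetilde{x}_{i^{*}}(t_{0})+\frac{1-\omega_{i^{*}}}{n}\sum_{j\neq i^{*}}\bigl(u_{ji^{*}}+b_{ji^{*}}\bigr)\;\geq\;\bar{z}+a_{i^{*}}-\bar{\alpha}-\frac{2(n-1)(1-\omega_{i^{*}})\delta}{n},
\end{equation*}
uniformly over $\{b_{ji^{*}}\}\in[-\delta,\delta]^{n-1}$; the symmetric argument yields an upper bound $\bar{z}-a_{j^{*}}+\bar{\alpha}+\tfrac{2(n-1)(1-\omega_{j^{*}})\delta}{n}$ for the pre-projection value at $j^{*}$. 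Choosing $\delta$ small enough that each side's error term is below $\varepsilon$, in Case 1 the projection is inactive and $x_{i^{*}}(t_{0}+1)-x_{j^{*}}(t_{0}+1)\geq a_{i^{*}j^{*}}-2\varepsilon$; in Case 2 the two coordinates saturate to $1$ and $0$ respectively, giving difference $1$. Either way $|x_{i^{*}}(t_{0}+1)-x_{j^{*}}(t_{0}+1)|\geq c_{\varepsilon}$, which is the desired robust reachability into $E_{c_{\varepsilon}}$.

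The principal obstacle is the interaction between the $[0,1]$-projection and the lower bound being sought: the center $\bar{z}$ must be placed so that the projection neither wastes push (the reason for the near-central placement in Case 1) nor prevents simultaneous saturation to both endpoints (the Case 2 interval $[1-a_{i^{*}},a_{j^{*}}]$ is nonempty precisely when $a_{i^{*}j^{*}}\geq 1$, which is also what forces the cap at $1$ in the definition of $c_{\varepsilon}$). A secondary technical point is robustness against the adversarial uncertainty $\{b_{ji}\}$, which is handled by taking worst-case values term by term in the sums above; this makes the constants $T$ and the lower bound on $\delta_{i}(t)$ required by Definition \ref{def_reach2} depend only on $n,\eta,\varepsilon$, and $\{\omega_{i},r_{i}\}$, not on the initial state.
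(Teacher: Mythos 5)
Your proposal is correct and follows essentially the same route as the paper's proof: cluster all opinions via Lemma \ref{lem1_a} around the center $\bar z=\tfrac12+\tfrac{a_{j^*}-a_{i^*}}{2}$ (which coincides with the paper's $x^*$, chosen for the pair minimizing $\omega_i+\omega_j$, i.e.\ maximizing $a_{ij}$), then in one step push $i^*$ up and $j^*$ down with near-maximal edge controls, with the symmetric placement about $1/2$ handling the projection exactly as in the paper. No gaps.
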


\begin{lem}\label{lem4_a}
Suppose that $n\geq 3$. For any constant $\varepsilon>0$, let
$c_{\varepsilon}:=\max_{i\neq j}c_{ij}-\varepsilon$, then
 $E_{c_\varepsilon}$ is finite-time robustly reachable from $[0,1]^n$ under protocol
(\ref{m6_new}).
\end{lem}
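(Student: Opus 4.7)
My plan is to combine Lemma \ref{lem1_a} with an explicit coordinated-push construction, in the same spirit as the (postponed) proof of Lemma \ref{lem4}, but adapted to the communication-noise model (\ref{m6_new}). First, fix a pair $(i^*, j^*)$ achieving $\max_{i \neq j} c_{ij}$, a small tolerance $\beta>0$, and an anchor $z_0 \in [0,1]$ chosen so that the subsequent downward push on $i^*$ and upward push on $j^*$ either both fit inside $[0,1]$, giving the internal separation $h_{i^*j^*} + h_{j^*i^*}$, or else saturate the boundary by exactly the amount $(a_i - h_{ij}) I_{\{a_i > h_{ij}\}}$ appearing in the second argument of the $\min$ defining $c_{ij}$. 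By Lemma \ref{lem1_a}, the tight cluster $S_{z_0, \beta}$ is finite-time robustly reachable from $[0,1]^n$ under (\ref{m6_new}), so I may assume every opinion starts $\beta$-close to $z_0$ with $\mathcal{N}_i(0) = \mathcal{V}$ for every $i$.

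Second, I apply the control law $u_{j i^*}(t) = -\eta + \delta$ on every edge feeding $i^*$, $u_{j j^*}(t) = \eta - \delta$ on every edge feeding $j^*$, and $u_{j k}(t) = 0$ on every remaining edge, so that only the two target agents are actively pushed. Tracing (\ref{m6_new}) via (\ref{lem1_0}), one such step shifts $i^*$ down by about $a_{i^*}$ and $j^*$ up by about $a_{j^*}$, which already realizes the third branch of $h_{ij}$ when $r_{i^*} \geq a_{i^*j^*}$ (up to a second-step correction of order $1/n$ from the drift in $x_{\rm ave}$). When $r_{i^*} \in [a_{i^*}, a_{i^*j^*})$, agent $j^*$ leaves $\mathcal{N}_{i^*}$ after the first step, so $i^*$'s neighborhood contracts to $n-1$ agents; reapplying the controls and recomputing (\ref{m6_new}) with $|\mathcal{N}_{i^*}| = n-1$ reproduces the middle branch, including the $(n-2)/(n-1)$ and $1-\omega_i$ factors. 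When $r_{i^*} < a_{i^*}$, the first step isolates $i^*$ entirely, so its second update collapses to $\omega_{i^*} x_{\rm ave}(1) + (1-\omega_{i^*}) x_{i^*}(1)$ with no incoming controls, yielding the $(1-\omega_i)^2$ prefactor of the first branch. A symmetric analysis for $j^*$ gives $h_{j^*i^*}$, and the total separation reaches $h_{i^*j^*} + h_{j^*i^*}$ modulo the boundary cap; the $\min$ in $c_{ij}$ selects whichever constraint is tighter.

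The main obstacle will be the meticulous case analysis needed to verify which branch of $h_{ij}$ governs agents $i^*$ and $j^*$ under every admissible disturbance $b_{ji} \in [-\delta, \delta]$. One must simultaneously (i) check that the neighbor-set transitions at the thresholds $a_i$ and $a_{ij}$ are robust to $O(\delta)$ perturbations, which is achievable by choosing $\delta$ and $\beta$ strictly smaller than the slack at each threshold, (ii) bound the cumulative $O(\beta + \delta)$ error so that $|x_{i^*}(t) - x_{j^*}(t)|$ exceeds $c_{i^*j^*} - \varepsilon = c_\varepsilon$, and (iii) track how the two directed pushes perturb $x_{\rm ave}(t)$ by $O(1/n)$, which is precisely what generates the $\omega_i(\omega_j - \omega_i)/n$ correction in each $h_{ij}$. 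Once these are handled, Definition \ref{def_reach2} is satisfied and $E_{c_\varepsilon}$ is finite-time robustly reachable from $[0,1]^n$.
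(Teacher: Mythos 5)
Your proposal follows essentially the same route as the paper's proof: prepare a tight cluster around a carefully chosen anchor (the paper does this via the Lemma \ref{lem2}-style argument realizing Lemma \ref{lem1_a}), then apply a two-step directed push on the distinguished pair with inert controls on the bystanders, derive the three branches of $h_{ij}$ from exactly the same neighbor-set case analysis on $r_i$ versus $a_i$ and $a_{ij}$ (isolation, loss of only $j$, full neighborhood), account for the $O(1/n)$ drift of $x_{\rm ave}$, and select the anchor to realize whichever argument of the $\min$ in $c_{ij}$ is binding. The remaining work you flag (robustness of the neighbor-set transitions and the boundary-saturation sub-cases) is precisely what the paper's detailed computations carry out, so the plan is sound.
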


The proof of Lemma \ref{lem1_a} is similar to the proof of Lemma \ref{lem1}, while the proofs of Lemmas \ref{lem3_a} and \ref{lem4_a}
are postponed to Appendix \ref{Proof_Lemmas_b}.

\begin{proof}[Proof of Theorem \ref{theorem_2c}]
For any constant $\varepsilon\geq 0$, let $c_{\varepsilon}^*=\min\{\max_{i\neq j}\{a_{ij},c_{ij}\},1\}-\varepsilon$.
Combining Lemma \ref{robust2} with Lemmas \ref{lem3_a} and \ref{lem4_a} we have a.s. $E_{c_{\varepsilon}^*}$ is reached in finite time under protocol
(\ref{m6}),  so making $\varepsilon$ tend to zero we have
\begin{eqnarray}\label{theo_2c_1}
\overline{d}_{\mathcal{V}}\geq c_0^* ~~~~~~a.s..
\end{eqnarray}

For the case that  $\eta\leq \min_{i\neq j} \frac{n r_i}{(n-1)(2-\omega_i-\omega_j)}$, this implies that $r_i\geq a_{ij}$ for all $i\neq j$, and then
\begin{eqnarray*}\label{theo_2c_2}
c_{ij}\leq h_{ij}+h_{ji}=a_{ij}\leq r_i\leq 1,~~~~\forall i\neq j.
\end{eqnarray*}
By this with (\ref{theo_2c_1}) we have
\begin{eqnarray}\label{theo_2c_3}
\overline{d}_{\mathcal{V}}\geq \max_{i\neq j} a_{ij}~~~~a.s..
\end{eqnarray}
Also, by Lemmas \ref{robust2} and \ref{lem1_a} we get
a.s. there exists a finite time $t_1$ such that $|x_i(t_1)-x_j(t_1)|\leq a_{ij}\leq \min\{r_i,r_j\}$ for any $i\neq j$,
which indicates $\widetilde{x}_1(t_1)=\cdots=\widetilde{x}_n(t_1)$. Here we recall that $\widetilde{x}_i(t)$ is defined by
(\ref{lem1_0}). In addition,
\begin{eqnarray*}\label{theo_2c_4}
x_i(t_1+1)=\Pi_{[0,1]}\Big(\widetilde{x}_i(t_1)+\frac{1-\omega_i}{n} \sum_{j\neq i} \zeta_{ji}(t_1)\Big),
\end{eqnarray*}
so
\begin{eqnarray*}\label{theo_2c_5}
\begin{aligned}
&|x_i(t_1+1)-x_j(t_1+1)|\\
&\leq \frac{1-\omega_i}{n} \sum_{k\neq i}|\zeta_{ki}(t_1)|+\frac{1-\omega_j}{n} \sum_{k\neq j}|\zeta_{kj}(t_1)|\\
&\leq \frac{(2-\omega_i-\omega_j)(n-1)\eta}{n}\\
&=a_{ij} \leq \min\{r_i,r_j\}.
\end{aligned}
\end{eqnarray*}
 Repeating this process we get $|x_i(t)-x_j(t)|\leq a_{ij}$ for all
$t>t_1$, which implies that  $\overline{d}_{\mathcal{V}}\leq \max_{i\neq j} a_{ij}$ a.s..
Combining this with (\ref{theo_2c_3}) we get $\overline{d}_{\mathcal{V}}=\max_{i\neq j} a_{ij}$ a.s..
\end{proof}

\begin{proof}[Proof of Theorem \ref{prop_1}]
For any $\alpha>0$, by Lemmas \ref{robust} and \ref{lem1} the set $S_{1/2,\alpha}$ is reached a.s. in finite time under systems (\ref{m1}) and (\ref{m3}),
so  we can get $\underline{d}_{\mathcal{V}}=0$ a.s. by  making $\alpha$ tend to zero.

Similarly, by  Lemmas \ref{robust2} and \ref{lem1_a} the set $S_{1/2,\alpha}$ is reached a.s. in finite time under systems (\ref{m6}),
so we can get $\underline{d}_{\mathcal{V}}=0$ a.s. by  making $\alpha$ tend to zero.
\end{proof}

\renewcommand{\thesection}{\Roman{section}}
\section{Increasing Confidence Thresholds May Harm Synchronization under System (\ref{m4})}\label{Analysis_sec_3}
\renewcommand{\thesection}{\arabic{section}}
From the study of Section \ref{Analysis_sec_1}, we see that small noise will lead to quasi-synchronization for the HK model with environment noise. For the HK model with communication noise, this result still holds in the homogeneous case, though it may be not true in the heterogeneous case. Interestingly, we will show that the quasi-synchronization may be broken if the confidence thresholds of constituent agents are increased.

  We only give the analytic result for the homogenous case of the system (\ref{m4}).
If $r_1=r_2=\cdots=r_n<\frac{1}{n-1}$, the values of $\overline{d}_{\mathcal{V}}$ and $\underline{d}_{\mathcal{V}}$ depend on the initial opinion. For example, if $x_i(0)=\frac{i-1}{n-1}$ for $1\leq i\leq n$, then all agents are isolated and will remain unchanged, which indicates $\overline{d}_{\mathcal{V}}=\underline{d}_{\mathcal{V}}=1$ for any $\eta$. However, if  $|x_i(0)-x_j(0)|\leq r_1$ for all $ i\neq j$, it can be obtained that $|x_i(t)-x_j(t)|\leq r_1$ for all $t\geq 1$ when $\eta\leq \frac{nr_1}{2(n-1)}$, which indicates $\underline{d}_{\mathcal{V}}\leq \overline{d}_{\mathcal{V}}\leq r_1$. Thus, to avoid dependence on the initial opinion, this paper only considers the case when $r_1=\cdots=r_n\geq \frac{1}{n-1}$.

   First we need introduce some lemmas as follows:
\begin{lem}\label{lem1c}
Consider the  protocol
(\ref{m4_new}) satisfying $r_1=\cdots=r_n\geq \frac{1}{n-1}$.
Then for any constants $z^*\in [0,1]$ and $\alpha^*>0$, $S_{z^*,\alpha^*}$ is finite-time robustly reachable from $[0,1]^n$,
where $S_{z^*,\alpha^*}$ is the state set defined by (\ref{set_less}).
\end{lem}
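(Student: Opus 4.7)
The plan is to adapt the proof strategy of Lemma \ref{lem1} to the communication-noise control system (\ref{m4_new}). The key structural difference from (\ref{m1_new}) is that the control inputs $u_{ji}(t)$ and uncertainties $b_{ji}(t)$ are available only along existing edges $j\in\mathcal{N}_i(t)\setminus\{i\}$, so an agent that is isolated at time $t$ (i.e.\ $\mathcal{N}_i(t)=\{i\}$) has $\widetilde{x}_i(t)=x_i(t)$ and therefore satisfies $x_i(t+1)=x_i(t)$ regardless of the chosen controls. This is precisely why the hypothesis $r_1=\cdots=r_n=:r\geq 1/(n-1)$ is imposed: by pigeonhole, any configuration in $[0,1]^n$ contains at least one pair of agents within distance $1/(n-1)\leq r$, so at least one controllable edge always exists. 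My plan is to robustly reach $S_{z^*,\alpha^*}$ in two phases, first driving the configuration into a ``tight'' state with all pairwise differences below $r$ (so the confidence graph is complete), and then translating that tight cluster so that every coordinate lies within $\alpha^*$ of $z^*$.

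For Phase~1 (consolidation), I would work with the sorted opinions $x_{(1)}(t)\leq\cdots\leq x_{(n)}(t)$. At every step there exists at least one ``live'' cluster (a maximal confidence-connected subset of size $\geq 2$), and by choosing $u_{ji}(t)$ coherently across its intra-cluster edges one can shift the whole cluster by approximately $\eta-\delta$ toward the nearest isolated agent or neighbouring cluster, while choosing $\delta_i(t)$ small enough (relative to the current internal slack $r-\mathrm{diam}(\mathrm{cluster})$) to keep the cluster connected against the worst-case $b_{ji}(t)\in[-\delta_i(t),\delta_i(t)]$. Iterating this absorption step, the live cluster grows and the number of connected components strictly decreases within a bounded number of rounds. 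After finitely many iterations all $n$ agents lie in a single interval of length at most $r$, and a few further averaging steps collapse the diameter so that the graph becomes and remains complete for every admissible perturbation.

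For Phase~2 (targeting), once the graph is complete we have $\widetilde{x}_i(t)=\bar{x}(t):=\frac{1}{n}\sum_j x_j(t)$ for every $i$, and choosing a common control value $u_{ji}(t)=u^*\in[-\eta+\delta,\eta-\delta]$ pointing from $\bar{x}(t)$ toward $z^*$ shifts each agent by essentially $\frac{n-1}{n}u^*$ up to a bounded perturbation from $b_{ji}(t)$. With $\delta$ chosen sufficiently small, a straightforward contraction argument shows that the diameter stays below $r$ throughout, while $\bar{x}(t)$ marches toward $z^*$, so the configuration enters $S_{z^*,\alpha^*}$ in finitely many steps and does so uniformly in $x(0)$; the horizon $T$ and margin $\varepsilon$ required by Definition \ref{def_reach2} can be extracted depending only on $n,\eta,r,\alpha^*$.

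The main obstacle lies in Phase~1: one must sequence the absorption step so that every isolated agent is eventually reached even though it receives no control input of its own, and one must verify robustness against the worst-case $b_{ji}(t)$ throughout the shift. This forces $\delta_i(t)$ to be taken small relative to the worst-case gap between the live cluster and the next agent, and relies crucially on the bound $r\geq 1/(n-1)$, which excludes the pathological equally-spaced configuration that would otherwise be an uncontrollable fixed point of (\ref{m4_new}).
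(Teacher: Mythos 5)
Your two-phase plan (consolidate, then translate) matches the paper's high-level structure, and your Phase~2 is essentially the paper's final targeting step. The gap is in Phase~1. Your progress measure is the number of confidence-connected components, but this is not monotone under the dynamics (\ref{m4_new}): a maximal connected component need not be complete, so its diameter can exceed $r$ (e.g., a chain with consecutive gaps $\le r$ but endpoints far apart), in which case your ``internal slack $r-\mathrm{diam}$'' is negative and the claim that you can keep the cluster connected while shifting it has no content; HK-type averaging of such a chain can split it, increasing the component count. Relatedly, even if you do reach a single connected component, that does not give diameter $\le r$ --- the equally spaced configuration $x_i=\frac{i-1}{n-1}$ is one component of diameter $1$ --- so the hand-off to Phase~2 (``the graph becomes complete'') is unjustified as stated; ``a few further averaging steps collapse the diameter'' is exactly the kind of claim that is false for noiseless heterogeneous-looking chains and needs a designed control argument. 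A smaller quantitative slip: the achievable per-step shift of an agent away from $\widetilde{x}_i(t)$ is at most $\frac{|\mathcal{N}_i(t)|-1}{|\mathcal{N}_i(t)|}\eta$ (only $\eta/2$ for a pair), not $\eta-\delta$.

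The paper avoids all of this by never trying to preserve clusters. It tracks the global range $x_{\max}(t)-x_{\min}(t)$ and shows it drops by a fixed amount (roughly $\min\{\eta/2,r_1\}$, up to $O(\eta/K)$) in a bounded number of steps, by pushing \emph{every} non-isolated agent toward one extreme: if the minimal (resp.\ maximal) agent is non-isolated, push everyone up toward $x_{\max}$ (resp.\ down toward $x_{\min}$), capping the control so no one overshoots the target extreme; isolated agents cost nothing because isolation means they already sit more than $r_1$ above the minimum (resp.\ below the maximum). A third case handles both extremes being isolated by first dragging the non-isolated agents up until the top agent acquires a neighbour. Only after the range is below $2\eta/K$ (so the graph is complete and stays complete) does it steer the midpoint to $z^*$ with the three-regime control, as in Lemma~\ref{lem1}. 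If you replace your component-counting argument with this range-contraction argument (or otherwise supply a genuine monotone quantity that survives component splitting), your proof goes through; as written, Phase~1 does not.
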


\begin{lem}\label{lem2c}
Consider the protocol (\ref{m4_new}) satisfying $r_1=\cdots=r_n\in[\frac{1}{n-1},1)$. Let $c_{\eta}:=\frac{2\eta(n-1)}{n}$.
For any $\varepsilon>0$, if $\eta\leq \frac{nr_1}{2(n-1)}$
then the set
$$E_{\varepsilon}':=\Big\{(x_1,\ldots,x_n): c_{\eta}-\varepsilon \leq \max|x_i-x_j|\leq c_{\eta}\Big\}$$
is finite-time robustly reachable from $[0,1]^n$.
\end{lem}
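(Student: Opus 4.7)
The plan is a two-stage construction: first concentrate all opinions near a common value so the communication graph becomes all-to-all, then execute a single step whose controls drive the extreme pair apart by almost $c_\eta$.

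For the concentration stage, apply Lemma \ref{lem1c} with $z^{*}=1/2$ and a parameter $\alpha>0$ to be fixed, robustly steering the state into $S_{1/2,\alpha}$ at some time $t_0$. If $\alpha<r_1/2$, then at $t_0$ every pair of opinions is within the common confidence threshold $r_1$, so $\mathcal{N}_i(t_0)=\mathcal{V}$ for every $i$, and hence $\widetilde{x}_1(t_0)=\cdots=\widetilde{x}_n(t_0)=x_{\mathrm{ave}}(t_0)$ by (\ref{lem1_0}). The collapse of the $\widetilde{x}_i$'s to a single value is the key feature that the next step exploits: because the ``deterministic'' part of the update becomes identical across $i$, the differences $x_i(t_0+1)-x_j(t_0+1)$ are produced entirely by the control/uncertainty terms.

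For the push stage, apply one more iteration of (\ref{m4_new}) at time $t_0$ with $\delta_i(t_0)=\delta\in(0,\eta)$ and controls
\begin{equation*}
u_{k1}(t_0)=\eta-\delta\ (k\ne 1),\qquad u_{kn}(t_0)=-(\eta-\delta)\ (k\ne n),
\end{equation*}
and $u_{ki}(t_0)=0$ for all other pairs. Since each $u_{ki}+b_{ki}\in[-\eta,\eta]$, any pre-projection difference satisfies $|x_i(t_0+1)-x_j(t_0+1)|\le 2(n-1)\eta/n=c_\eta$, yielding the robust upper bound on the maximum spread. For the lower bound, the worst-case uncertainties $b_{k1}=-\delta,\ b_{kn}=+\delta$ (which minimize the gap between agents $1$ and $n$) give $x_1(t_0+1)-x_n(t_0+1)\ge c_\eta-4(n-1)\delta/n$, so fixing $\delta\le n\varepsilon/(4(n-1))$ forces the maximum spread into $[c_\eta-\varepsilon,c_\eta]$, i.e.\ $x(t_0+1)\in E_\varepsilon'$.

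The main point requiring care is keeping the projection $\Pi_{[0,1]}$ inactive during the push step, since clipping either coordinate would shrink the realized gap and could leave the state outside $E_\varepsilon'$. The hypotheses $\eta\le nr_1/(2(n-1))$ and $r_1<1$ give $(n-1)\eta/n\le r_1/2<1/2$, providing exactly the slack needed: further shrinking $\alpha$ so that $\alpha+(n-1)\eta/n<1/2$ guarantees every updated opinion stays in $(0,1)$ regardless of the adversary. Taking horizon $T=T_1+1$ and uniform parameter lower bound $\min(\varepsilon_1,\delta)$, with $T_1,\varepsilon_1$ inherited from Lemma \ref{lem1c}, verifies Definition \ref{def_reach2} and completes the argument.
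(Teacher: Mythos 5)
Your proposal is correct and follows essentially the same two-stage strategy as the paper's proof: first invoke Lemma \ref{lem1c} to concentrate all opinions near $1/2$ so that the graph becomes complete and all $\widetilde{x}_i$ coincide, then execute one control step that separates designated agents by nearly $\frac{2(n-1)\eta}{n}$ while the bound $\frac{n-1}{n}\eta\le r_1/2<1/2$ keeps the projection inactive. The only difference is cosmetic — the paper pushes agent $1$ down and all others up, whereas you push agents $1$ and $n$ in opposite directions and give the rest zero control — and both choices yield the same spread.
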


\begin{lem}\label{lem3c}
Consider the protocol (\ref{m4_new}) satisfying $n\geq 3$ and $r_1=\cdots=r_n\in[\frac{1}{n-1},1)$. If $\eta>\frac{nr_1}{2(n-1)}$ then
$E_1$ is finite-time robustly reachable from $[0,1]^n$,
where $E_1$  is the state set defined by (\ref{set_big}).
\end{lem}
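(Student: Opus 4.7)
The plan is to steer the system in finitely many robustly controllable steps to the configuration $x_1=0$ and $x_2=\cdots=x_n=1$, so that $d_{\mathcal V}=1$ and $E_1$ is reached. This will be done in three stages whose parameters are chosen consistently at the end.

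First, using Lemma~\ref{lem1c}, concentrate all agents into $S_{z^{*},\alpha^{*}}$ where $z^{*}=\min\{\tfrac{n-1}{n}\eta,\,1\}-\epsilon_1$ and $\alpha^{*}<\epsilon_1/2$ for some small $\epsilon_1>0$ to be chosen; $z^{*}$ is essentially the largest point from which a single maximum downward shift of magnitude $\tfrac{n-1}{n}\eta$ still projects to $0$. Once all agents lie in $S_{z^{*},\alpha^{*}}$, apply one step of \eqref{m4_new} with $u_{j1}=-\eta+\delta$ for $j\neq 1$ and $u_{ji}=\eta-\delta$ for every other ordered pair with $i\neq 1$, for small $\delta>0$. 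A short computation under worst-case $b_{ji}$ gives $x_1(1)=\Pi_{[0,1]}\bigl(\widetilde x_1(0)-\tfrac{n-1}{n}(\eta-2\delta)\bigr)=0$ (guaranteed once $\epsilon_1\geq \alpha^{*}+\tfrac{2(n-1)}{n}\delta$), while for each $j\neq 1$, $x_j(1)=\Pi_{[0,1]}\bigl(\widetilde x_j(0)+\tfrac{n-1}{n}(\eta-\delta)+\tfrac{1}{n}\sum_{i\neq j}b_{ij}\bigr)$ lies within $2\delta$ of $\min\{c_{\eta},1\}-\epsilon_1$. Since the hypothesis $\eta>\tfrac{n r_1}{2(n-1)}$ is exactly $c_{\eta}>r_1$, making $\epsilon_1,\alpha^{*},\delta$ small enough yields $x_j(1)-x_1(1)>r_1$ for every $j\neq 1$: agent~$1$ becomes isolated, and the remaining $n-1\geq 2$ agents form a pairwise-connected cluster of diameter at most $2\delta$.

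From time~$1$ on, agent~$1$ has no neighbour besides itself, so its update reduces to $x_1(t+1)=\Pi_{[0,1]}(x_1(t))=0$ for every $t\geq 1$. Within the cluster $\{2,\ldots,n\}$ now apply $u_{ji}(t)=\eta-\delta$ for every pair in the cluster. Because all cluster members share the same $\widetilde x_j(t)$, that common value advances by at least $\tfrac{n-2}{n-1}(\eta-2\delta)>0$ per step (worst case $b_{ij}$), while any difference $x_j(t+1)-x_{j'}(t+1)$ is only $\tfrac{B_j-B_{j'}}{n-1}$ with $|B_j|\leq(n-2)\delta$, so the internal cluster diameter grows by no more than $2\delta$ per step. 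Hence after an a~priori finite number $T_3\leq\lceil(n-1)/((n-2)(\eta-2\delta))\rceil$ of such steps, projection forces every $x_j$ with $j\neq 1$ up to $1$, yielding $d_{\mathcal V}=1$.

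The three stages are tied together by fixing $T_3$ first, then choosing $\delta\leq r_1/(4T_3)$, $\alpha^{*}=\delta$, and $\epsilon_1=4\delta$; the common tolerance $\varepsilon$ demanded by Definition~\ref{def_reach2} can then be taken as the minimum of $\delta$ and the $\varepsilon$ returned by Lemma~\ref{lem1c}. The main technical obstacle is the robustness of the third stage: the adversary may use the $b_{ji}$ to spread the cluster agents apart, and if this spread were ever to exceed $r_1$ the cluster would fragment and its members would get stuck before reaching $1$. This is precisely why $T_3$ must be fixed before $\delta$; once $T_3$ is known, $\delta\leq r_1/(4T_3)$ bounds the accumulated adversarial drift by $2\delta T_3\leq r_1/2<r_1$, so the cluster stays pairwise connected throughout stage three and the argument closes.
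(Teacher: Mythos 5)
Your proposal is correct and follows essentially the same route as the paper's proof: concentrate all agents via Lemma \ref{lem1c} at a point low enough that one maximal downward kick projects agent $1$ to $0$ while the upward kicks separate the other $n-1$ agents from it by more than $r_1$, then repeatedly push the now-isolated cluster $\{2,\ldots,n\}$ up to $1$ (the paper concentrates at $r_1/2$ rather than near $\tfrac{n-1}{n}\eta$, but this is immaterial). Your extra bookkeeping on the cluster diameter is more conservative than necessary — since the cluster members share a common $\widetilde{x}_j(t)$ at each step, the diameter is reset to at most $\tfrac{2(n-2)\delta}{n-1}$ every step rather than accumulating — but the argument closes either way.
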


The proofs of Lemmas \ref{lem1c}, \ref{lem2c}, and \ref{lem3c} are postponed to Appendix \ref{Proof_Lemmas_c}.

\begin{thm}\label{theorem_1c}
Consider the system (\ref{m4}) satisfying (A2), $n\geq 3$ and $r_1=\cdots=r_n\in [\frac{1}{n-1},1)$.
Then for any initial opinions $x(0)\in [0,1]^n$,
a.s. $\underline{d}_{\mathcal{V}}=0$, and
\begin{equation*}\label{theo1c_00}
  \overline{d}_{\mathcal{V}}=\left\{
           \begin{array}{ll}
           \frac{2\eta(n-1)}{n}  & \mbox{if~}  \eta\leq \frac{nr_1}{2(n-1)} \\
           1 &  \mbox{otherwise}
           \end{array}
         \right..
\end{equation*}
Also, for any $\alpha>0$ we set
\begin{equation*}
  c_{\alpha}:=\left\{
           \begin{array}{ll}
            \frac{2\eta(n-1)}{n}-\alpha  & \mbox{if~} \eta\leq \frac{nr_1}{2(n-1)} \\
            1  &  \mbox{otherwise}
           \end{array}
         \right.,
\end{equation*}
and define $\tau_0=0$, and $\tau_k$ to be the stopping time as
 \begin{eqnarray*}\label{theo1c_00_a}
\tau_k=\left\{%
\begin{array}{ll}
\min\{s>\tau_{k-1}:d_{\mathcal{V}}(s)\leq \alpha\} & \mbox{if $k$ is odd},\\
\min\{s>\tau_{k-1}:d_{\mathcal{V}}(s)\geq c_\alpha\} & \mbox{if $k$ is  even},
\end{array}%
\right.
\end{eqnarray*}
 then for all $j\geq 0$ and $t\geq 0$,
\begin{eqnarray}\label{theo1c_01}
\begin{aligned}
P\left(\tau_{2j+2}-\tau_{2j}> t \right) \leq (1-\lambda_5)^{\lfloor t/\Lambda_5\rfloor},
\end{aligned}
\end{eqnarray}
where $\lambda_5\in (0,1)$ and $\Lambda_5>0$ are constants  only depending on  $n, \eta, \underline{\rho}, \alpha$ and $r_1$.
\end{thm}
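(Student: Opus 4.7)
The plan is to mirror the proofs of Theorems \ref{theorem_1} and \ref{theorem_2c}: reduce the stochastic analysis of system (\ref{m4}) to the design of robust controllers for system (\ref{m4_new}) via Lemma \ref{robust2}, with Lemmas \ref{lem1c}, \ref{lem2c}, and \ref{lem3c} supplying the three robust-reachability inputs.

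First, to obtain $\underline{d}_{\mathcal{V}}=0$ a.s., I apply Lemma \ref{lem1c} for every $\alpha>0$: it makes $S_{1/2,\alpha}$ finite-time robustly reachable, so Lemma \ref{robust2}(i) yields an a.s.\ infinite sequence of times at which $d_{\mathcal{V}}(t)\le 2\alpha$, and letting $\alpha\downarrow 0$ gives the claim. To pin down $\overline{d}_{\mathcal{V}}$ in the regime $\eta\le\frac{nr_1}{2(n-1)}$, Lemma \ref{lem1c} with $\alpha=r_1/2$ combined with Lemma \ref{robust2}(i) produces an a.s.\ finite time $t_1$ with $d_{\mathcal{V}}(t_1)<r_1$; at that instant every neighbor set equals $\mathcal{V}$, so (\ref{m4}) collapses to
\[
x_i(t_1+1)=\Pi_{[0,1]}\Big(x_{\mathrm{ave}}(t_1)+\tfrac{1}{n}\sum_{k\neq i}\zeta_{ki}(t_1)\Big),
\]
and non-expansiveness of $\Pi_{[0,1]}$ gives $|x_i(t_1+1)-x_j(t_1+1)|\le\frac{2(n-1)\eta}{n}\le r_1$. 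A straightforward induction then preserves both full connectivity and the bound $d_{\mathcal{V}}(t)\le\frac{2(n-1)\eta}{n}$ for every $t>t_1$, so $\overline{d}_{\mathcal{V}}\le\frac{2(n-1)\eta}{n}$. The matching lower bound follows from Lemma \ref{lem2c} combined with Lemma \ref{robust2}(i), since $E'_\varepsilon\subseteq\{d_{\mathcal{V}}\ge c_\eta-\varepsilon\}$ is a.s.\ visited; sending $\varepsilon\downarrow 0$ closes the gap. For $\eta>\frac{nr_1}{2(n-1)}$, Lemma \ref{lem3c} together with Lemma \ref{robust2}(i) analogously gives $\overline{d}_{\mathcal{V}}=1$ a.s.

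For the switching-time estimate (\ref{theo1c_01}), I would invoke Lemma \ref{robust2}(ii) with $k=2$, $S_1=S_{1/2,\alpha/2}\subseteq\{d_{\mathcal{V}}\le\alpha\}$, and $S_2=E'_\alpha$ in the first regime or $S_2=E_1$ in the second; each choice of $S_2$ lies inside $\{d_{\mathcal{V}}\ge c_\alpha\}$. Lemmas \ref{lem1c}, \ref{lem2c}, and \ref{lem3c} certify the robust reachability of both sets, so Lemma \ref{robust2}(ii) delivers a geometric-tail bound on successive pairs of visits; since $\tau_{2j+2}-\tau_{2j}$ is exactly the time needed after $\tau_{2j}$ to hit the close set and then the far set, the bound transfers directly to $P(\tau_{2j+2}-\tau_{2j}>t)$ and yields (\ref{theo1c_01}). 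The main obstacle is not in this assembly step but inside Lemma \ref{lem2c}, where one must design control inputs under (\ref{m4_new}) that drive $\max|x_i-x_j|$ up to the tight value $c_\eta=\frac{2(n-1)\eta}{n}$ while respecting the regime constraint $\eta\le\frac{nr_1}{2(n-1)}$ that keeps every pair of agents communicating.
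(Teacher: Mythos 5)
Your proposal is correct and follows essentially the same route as the paper's own proof: establish $\underline{d}_{\mathcal{V}}=0$ via Lemmas \ref{lem1c} and \ref{robust2}, use the resulting full connectivity at a finite time plus induction to get the upper bound $\frac{2(n-1)\eta}{n}$ in the small-noise regime, match it from below with Lemma \ref{lem2c}, handle $\eta>\frac{nr_1}{2(n-1)}$ with Lemma \ref{lem3c}, and obtain (\ref{theo1c_01}) from Lemma \ref{robust2}(ii) applied to the pair of robustly reachable sets. No gaps; the choices of target sets and the inductive preservation of $d_{\mathcal{V}}(t)\le\frac{2(n-1)\eta}{n}$ match the paper's argument.
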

\begin{proof}
First by Lemmas \ref{lem1c} and \ref{robust2} we can get a.s. $\underline{d}_{\mathcal{V}}=0$ when we let the value of $\alpha$ in Lemma \ref{lem1c}  tend to $0$.

Next we consider the value of $\overline{d}_{\mathcal{V}}$.
For the case that $\eta\leq \frac{nr_1}{2(n-1)}$, since  $\underline{d}_{\mathcal{V}}=0$ a.s. there exists a finite time $t_1$
such that $\max_{i,j}|x_i(t_1)-x_j(t_1)|\leq \frac{n-1}{n}2\eta \leq r_1$.
By (\ref{m4}) and (\ref{lem1_0}) we have $\widetilde{x}_1(t_1)=\cdots=\widetilde{x}_n(t_1)$, and
\begin{eqnarray*}
x_i(t_1+1)\in \Big[\widetilde{x}_i(t_1)-\frac{n-1}{n}\eta, \widetilde{x}_i(t_1)+\frac{n-1}{n}\eta\Big],
\end{eqnarray*}
which indicates
\begin{eqnarray*}
|x_j(t+1)-x_i(t+1)| \leq \frac{n-1}{n}2\eta\leq r_1.
\end{eqnarray*}
Repeating the above process we get that for any $t\geq t_1$,
\begin{eqnarray}\label{theo1c_06}
\begin{aligned}
\max_{i,j} |x_{i}(t)-x_{j}(t)| \leq \frac{n-1}{n}2\eta.
\end{aligned}
\end{eqnarray}
Combining (\ref{theo1c_06}) with Lemmas \ref{lem2c} and \ref{robust2} we get $\overline{d}_{\mathcal{V}}=\frac{2\eta(n-1)}{n}$ a.s. when we let the value of $\varepsilon$ in Lemma \ref{lem2c}  tend to $0$.

If $\eta>\frac{nr_1}{2(n-1)}$, by Lemmas \ref{lem3c} and \ref{robust2} we have a.s. $\overline{d}_{\mathcal{V}}=1$.

Finally, combining Lemma (\ref{robust2}) with Lemmas \ref{lem1c}, \ref{lem2c} and \ref{lem3c} yields (\ref{theo1c_01}).
\end{proof}

\begin{rem}
%System (\ref{m4}) seems to be a special case of the system (\ref{m6}) when $\{\omega_i\}_{1\leq i\leq n}$ all tend to $0$.
In Theorem \ref{theorem_2c}, if $\{\omega_i\}_{1\leq i\leq n}$ all tend to $0^+$ then almost surely
\begin{equation}\label{rem_label1}
  \overline{d}_{\mathcal{V}}\left\{
           \begin{array}{ll}
            =\frac{2(n-1)\eta}{n} &\mbox{if~} \eta\leq \min_{i} \frac{n r_i}{2(n-1)}, \\
            \geq \min\{\max_{i\neq j}\{\frac{2(n-1)\eta}{n},c_{ij}^*\},1\}  &  \mbox{otherwise},
           \end{array}
         \right.
\end{equation}
where $c_{ij}^*$ is the limit value of $c_{ij}$ as $\{\omega_i\}_{1\leq i\leq n}$ all tend to $0^+$. Theorem \ref{theorem_1c} is consistent
with (\ref{rem_label1}) for the case when $\eta\leq \min_{i} \frac{n r_i}{2(n-1)}$, but is stronger than (\ref{rem_label1}) for the case when $\eta>\min_{i} \frac{n r_i}{2(n-1)}$. Thus, Theorem \ref{theorem_1c} is not a special case of Theorem \ref{theorem_2c}.
\end{rem}

For the system (\ref{m4}), we only consider the homogeneous case since the heterogeneous case is quite difficult to analyze in detail. When considering the heterogeneous system (\ref{m4}), the finite-time robust reachability (Lemma \ref{lem1c}) may not hold under some configurations. Then, in contrast to the system (\ref{m1}), in system (\ref{m4}), small noise may not lead to quasi-synchronization. More interestingly, if we increase the confidence thresholds of constituent agents, the quasi-synchronization may be broken. Also, the value of  $\overline{d}_{\mathcal{V}}$ as a function of $\eta$ may exhibit a phase transition at some critical points. However, raising $\eta$ may promote synchronization, which is different from the systems (\ref{m1}) and (\ref{m3}). To illustrate these phenomena we give an example as follows:

\begin{example}
Assume that system (\ref{m4}) contains $4$ agents, and the communication noises are independently and uniformly distributed in $[-\eta,\eta]$. If $r_1=r_2=r_3=r_4=\frac{1}{3}$, for any $\eta\in (0,2/9)$
and any initial opinions, by Theorem \ref{theorem_1c} the system will reach quasi-synchronization a.s.. The evolution of opinions
is shown in Figure \ref{EX46an} with $\eta=0.1$.
 However, if we
increase the values of $r_2$ and $r_3$ to $1$, and
suppose $x(0)=(0,1/2,1/2,1)$, then for any $\eta\in(0,1/9)$,
it can be shown that $x_1(t)=0$, $x_4(t)=1$ for all $t\geq 1$, while $x_2(t)$ and $x_3(t)$ fluctuate between
$(\frac{1}{2}-\frac{3}{2}\eta,\frac{1}{2}+\frac{3}{2}\eta)$.  The evolution of opinions
is shown in Figure \ref{EX46bn} with $\eta=0.1$.
\end{example}

\begin{figure}[htbp]
\begin{center}
\includegraphics[height=2in,width=2.8in]{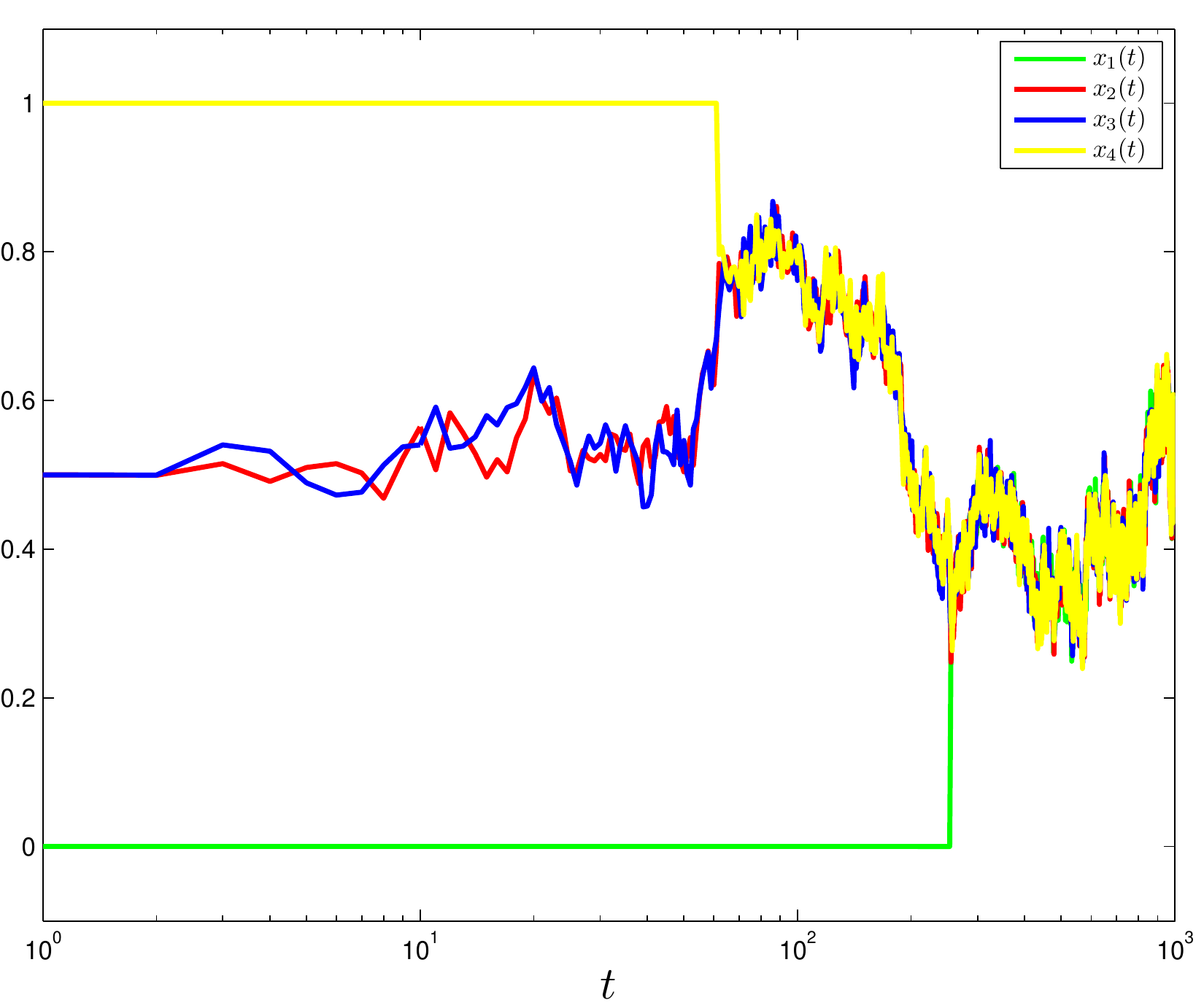}
\caption{The evolution of system (\ref{m4}) with $n=4$,
$x(0)=(0,1/2,1/2,1)$, $(r_1,\ldots,r_4)=(1/3,1/3,1/3,1/3)$, and $\eta=0.1$.} \label{EX46an}
\includegraphics[height=2in,width=2.8in]{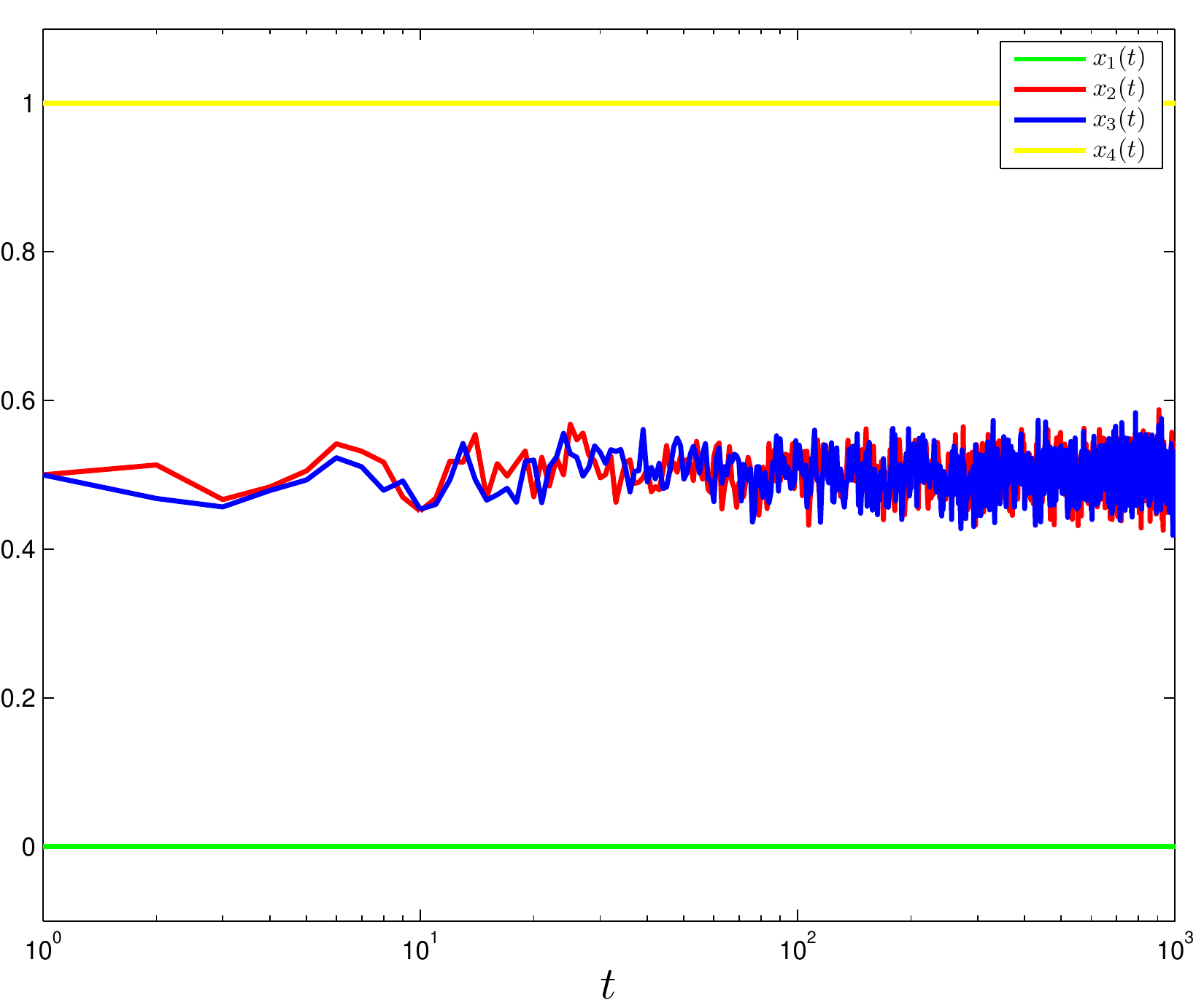}
\caption{The evolution of system (\ref{m4}) with $n=4$,
$x(0)=(0,1/2,1/2,1)$, $(r_1,\ldots,r_4)=(1/3,1,1,1/3)$, and $\eta=0.1$.} \label{EX46bn}
\end{center}
\end{figure}

\renewcommand{\thesection}{\Roman{section}}
\section{Conclusion and future work}\label{Conclusions}
\renewcommand{\thesection}{\arabic{section}}

The agreement and disagreement analysis of opinion dynamics has attracted an increasing amount of interest in recent years. On the other hand, all natural systems are inextricably affected by noise, and how noise affects the collective behavior of a complex system has also garnered considerable interest from researchers and developers in various fields. Thus, a natural problem is how the noise affects the agreement or bifurcation in opinion dynamics. This paper analyzes heterogeneous HK models with either environment or communication noise for the first time, and provides some critical results for quasi-synchronization.

There are still some problems that have not been considered. For example, because of limited space, this paper does not consider heterogeneous HK models with both environment and communication noises. Such systems may exhibit some interesting properties different from the systems (\ref{m1})-(\ref{m6}), though the analysis may be much more complex. These problems and models leave us with a direction for future research.

\renewcommand{\thesection}{\Roman{section}}

\appendices

\section{Proofs of Lemmas \ref{lem1}-\ref{lem4}}\label{Proof_Lemmas_a}

\begin{proof}[Proof of Lemma \ref{lem1}]
We consider the protocol (\ref{m1_new}) first.
Without loss of generality we assume $\alpha'\in(0,\eta/2)$.
 The main idea of this proof is: For each agent $i$, if its neighbors' average opinion $\widetilde{x}_i(t)$ is larger than an upper bound, we set $u_i(t)$ to be a negative input; if $\widetilde{x}_i(t)$ is less than a lower bound, we set $u_i(t)$ to be a positive input. Otherwise, we select a control input such that $x_i(t+1)$ will be in the interval $[z'-\alpha',z'+\alpha']$.
With this idea, for $t\geq 0$ and $1\leq i \leq n$  we choose $\delta_i(t)=\alpha'$ and
\begin{eqnarray}\label{lem1_1}
&&u_i(t)\\
&&=\left\{%
\begin{array}{ll}
-\eta+\alpha' & \mbox{if}~\widetilde{x}_i(t)> z'+\eta-\alpha',\\
z'-\widetilde{x}_i(t) & \mbox{if}~\widetilde{x}_i(t)\in [z'-\eta+\alpha',z'+\eta-\alpha'],\\
\eta-\alpha' & \mbox{if}~\widetilde{x}_i(t)< z'-\eta+\alpha'.
\end{array}%
\right.\nonumber
\end{eqnarray}
It can be computed that
\begin{eqnarray*}\label{lem1_2}
u_i(t)\in [-\eta+\delta_i(t),\eta-\delta_i(t)], ~~\forall i\in\mathcal{V}, t\geq 0,
\end{eqnarray*}
which meets its requirement in Definition \ref{def_reach}. Define
$$x_{\max}(t):=\max_{1\leq i \leq n}x_i(t)~~~~\mbox{and}~~~~x_{\min}(t):=\min_{1\leq i \leq n}x_i(t).$$
For any $i\in \mathcal{V}$, if $\widetilde{x}_i(t)>z'+\eta-\alpha'$ which indicates $x_{\max}(t)>z'+\eta-\alpha'$, by  (\ref{lem1_1})
we have
\begin{eqnarray}\label{lem1_3}
\begin{aligned}
&x_i(t+1)\\
&=\widetilde{x}_i(t)+u_i(t)+b_i(t)=\widetilde{x}_i(t)-\eta+\alpha'+b_i(t)\\
&\in (z'+b_i(t), x_{\max}(t)-\eta+\alpha'+b_i(t)]
\end{aligned}
\end{eqnarray}
and
\begin{eqnarray}\label{lem1_3a}
\begin{aligned}
&(z'+b_i(t), x_{\max}(t)-\eta+\alpha'+b_i(t)]\\
&\subseteq (z'-\alpha', x_{\max}(t)-\eta+2\alpha'].
\end{aligned}
\end{eqnarray}
If $\widetilde{x}_i(t)<z'-\eta+\alpha'$ which indicates $x_{\min}(t)<z'-\eta+\alpha'$,
similar to (\ref{lem1_3}) and (\ref{lem1_3a})  we have
\begin{eqnarray}\label{lem1_4}
\begin{aligned}
x_i(t+1)\in [x_{\min}(t)+\eta-2\alpha',z'+\alpha').
\end{aligned}
\end{eqnarray}
Otherwise if $\widetilde{x}_i(t)\in [z'-\eta+\alpha',z'+\eta-\alpha']$, by  (\ref{lem1_1})
we have
\begin{eqnarray}\label{lem1_5}
&&x_i(t+1)\\
&&=\widetilde{x}_i(t)+z'-\widetilde{x}_i(t)+b_i(t)\in [z'-\alpha',z'+\alpha'].\nonumber
\end{eqnarray}
From (\ref{lem1_3})-(\ref{lem1_5}) together we can get
\begin{eqnarray*}\label{lem1_6}
\begin{aligned}
x_{\max}(t+1)\leq \max\left\{x_{\max}(t)-\eta+2\alpha',z'+\alpha' \right\}
\end{aligned}
\end{eqnarray*}
and
\begin{eqnarray*}\label{lem1_7}
\begin{aligned}
x_{\min}(t+1)\geq \min \left\{x_{\min}(t)+\eta-2\alpha',z'-\alpha'  \right\},
\end{aligned}
\end{eqnarray*}
so there must exist a finite time $t_1$ such that $x_i(t_1)\in [z'-\alpha',z'+\alpha']$ for all $i\in \mathcal{V}$.

For protocol (\ref{m3_new}), this result can be obtained by a similar method to the above.
\end{proof}

\begin{proof}[Proof of Lemma \ref{lem2}]
Let $K$ be a constant satisfying
$$K\geq \max\Big\{ \frac{4(r_{\min}+\eta)}{2\eta-r_{\min}}, \frac{2n\eta}{n\eta-r_{\max}} \Big\}.$$
By Lemma \ref{lem1} the set $S_{\frac{K+2}{2K}r_{\min},\frac{r_{\min}}{K}}$ is  finite-time robustly reachable from $[0,1]^n$ under protocol
(\ref{m1_new}). We record the stop time of the set $S_{\frac{K+2}{2K}r_{\min},\frac{r_{\min}}{K}}$ being reached as $t_1$, which means
\begin{eqnarray*}\label{lem2_2}
x_i(t_1)\in [\frac{r_{\min}}{2}, \frac{r_{\min}}{2}+\frac{2r_{\min}}{K}],~~~~\forall i\in\mathcal{V}.
\end{eqnarray*}
 By this and (\ref{lem1_0}) we have
\begin{eqnarray}\label{lem2_3}
\begin{aligned}
&\widetilde{x}_1(t_1)=\widetilde{x}_2(t_1)=\cdots=\widetilde{x}_n(t_1)\\
&\in [\frac{r_{\min}}{2}, \frac{r_{\min}}{2}+\frac{2r_{\min}}{K}].
\end{aligned}
\end{eqnarray}

Without loss of generality we assume agent $1$ has the smallest interaction radius, i.e., $r_1=r_{\min}$.
For any $t\geq t_1$ and $i\in\mathcal{V}$ we choose $\delta_i(t)=\frac{\eta}{K}$, and
\begin{eqnarray}\label{lem2_4}
u_i(t)=\left\{%
\begin{array}{ll}
\eta-\frac{\eta}{K} & \mbox{if}~i=1,\\
-\eta+\frac{\eta}{K} & \mbox{otherwise}.
\end{array}%
\right.
\end{eqnarray}
Because $x_i(t_1+1)=\Pi_{[0,1]}(\widetilde{x}_i(t_1)+u_i(t_1)+b_i(t_1))$, by this and (\ref{lem2_3})  we can get
\begin{eqnarray}\label{lem2_5}
\begin{aligned}
x_1(t_1+1)&\geq \min \Big\{\widetilde{x}_1(t_1)+\eta-\frac{2\eta}{K},1\Big\}\\
&\geq \min \Big\{\frac{r_{\min}}{2}+\eta-\frac{2\eta}{K},1\Big\}>r_{\min},
\end{aligned}
\end{eqnarray}
and
\begin{eqnarray}\label{lem2_6}
\begin{aligned}
x_i(t_1+1)&\leq \max \Big\{\widetilde{x}_i(t_1)-\eta+\frac{2\eta}{K},0\Big\}\\
&\leq \max \Big\{\frac{r_{\min}}{2}+\frac{2r_{\min}}{K}-\eta+\frac{2\eta}{K},0\Big\}\\
&=0,~~~~i=2,\ldots,n,
\end{aligned}
\end{eqnarray}
so
\begin{eqnarray*}\label{lem2_7}
x_1(t_1+1)-x_i(t_1+1)>r_{\min}=r_1,~~i=2,\ldots,n.
\end{eqnarray*}
Next we compute $x_i(t_1+2)$. Because agent $1$ cannot receive information from the others at
time $t_1+1$, with (\ref{lem2_5}) we get
\begin{eqnarray}\label{lem2_8}
\begin{aligned}
x_1(t_1+2)&=x_1(t_1+1)+u_i(t_1+1)+b_i(t_1+1)\\
&\geq \min \Big\{\widetilde{x}_1(t_1)+2\Big(\eta-\frac{2\eta}{K}\Big),1\Big\}.
\end{aligned}
\end{eqnarray}
Also, for $2\leq i \leq n$, by (\ref{lem2_6}) we have
\begin{eqnarray*}\label{lem2_9}
\widetilde{x}_i(t_1+1)=\frac{x_1(t_1+1)}{n} I_{\{x_1(t_1+1)\leq r_i\}}\leq \frac{r_i}{n}\leq  \frac{r_{\max}}{n},
\end{eqnarray*}
which is followed by
\begin{eqnarray}\label{lem2_10}
\begin{aligned}
x_i(t_1+2)\leq \max \Big\{\frac{r_{\max}}{n}-\eta+\frac{2\eta}{K},0\Big\}=0.
\end{aligned}
\end{eqnarray}
Repeating the process of (\ref{lem2_8})-(\ref{lem2_10}) we get that there exists a finite time $t_2>t_1$ such that
$x_1(t_2)=1$, and $x_i(t_2)=0$ for $2\leq i \leq n$.
\end{proof}

\begin{proof}[Proof of Lemma \ref{lem3}]
We consider protocol (\ref{m1_new}) first. Without loss of generality we assume that $\varepsilon$ is  arbitrarily small (though
positive).
By Lemma \ref{lem1} the set $S_{\frac{1}{2},\frac{\varepsilon}{2}}$ is  finite-time robustly reachable from $[0,1]^n$ under protocol
(\ref{m1_new}). We record the stop time of the set $S_{\frac{1}{2},\frac{\varepsilon}{2}}$  being reached as $t_1$, which implies
\begin{eqnarray}\label{lem3_1}
\widetilde{x}_1(t_1)=\widetilde{x}_2(t_1)\in \Big[\frac{1}{2}-\frac{\varepsilon}{2},\frac{1}{2}+\frac{\varepsilon}{2}\Big].
\end{eqnarray}
We choose $\delta_i(t_1)=\frac{\varepsilon}{4}$ for any $i\in\mathcal{V}$,  and $u_1(t_1)=\eta-\frac{\varepsilon}{4}$,
and $u_2(t_1)=-\eta+\frac{\varepsilon}{4}$. With this and (\ref{lem3_1}) we have
\begin{eqnarray*}\label{lem3_2}
\begin{aligned}
x_1(t_1+1)&\geq \min\Big\{\widetilde{x}_1(t_1)+u_1(t_1)-\delta_1(t_1),1 \Big\}\\
&\geq \min\Big\{\widetilde{x}_1(t_1)+\eta-\frac{\varepsilon}{2},1 \Big\},
\end{aligned}
\end{eqnarray*}
and
\begin{eqnarray*}\label{lem3_3}
\begin{aligned}
x_2(t_1+1)&\leq \max\Big\{\widetilde{x}_2(t_1)+u_2(t_1)+\delta_2(t_1),0 \Big\}\\
&\leq \max\Big\{\widetilde{x}_2(t_1)-\eta+\frac{\varepsilon}{2},0 \Big\}.
\end{aligned}
\end{eqnarray*}
By these and (\ref{lem3_1}) we get $x_1(t_1+1)-x_2(t_1+1)\geq c_{\varepsilon}$.

For protocol (\ref{m3_new}), our result can be obtained by a similar method to the above.
\end{proof}

\begin{proof}[Proof of Lemma \ref{lem4}]
Without loss of generality we assume $r_1<2\eta$ and $\omega_1=\underline{\omega}_{\eta}$.
For any constant $K$ satisfying
\begin{eqnarray}\label{lem4_a00}
K\geq \max\Big\{ \frac{4(r_{1}+\eta)}{2\eta-r_{1}},\frac{4\eta n}{(n-1)w_1\varepsilon},\frac{2\eta n}{\varepsilon}\Big\},
 \end{eqnarray}
 with a similar method to the proof of Lemma \ref{lem2}
 we can find
a finite time $t_1$ such that
\begin{eqnarray}\label{lem4_00}
\widetilde{x}_1(t_1)=\widetilde{x}_2(t_1)=\cdots=\widetilde{x}_n(t_1)\in [\frac{r_{1}}{2}, \frac{r_{1}}{2}+\frac{2r_{1}}{K}].
\end{eqnarray}

For any $t\geq t_1$ and $i\in\mathcal{V}$ we choose $\delta_i(t)=\frac{\eta}{K}$, and $u_i(t)$
as same as (\ref{lem2_4}). Because $x_i(t_1+1)=\Pi_{[0,1]}(\widetilde{x}_i(t_1)+u_i(t_1)+b_i(t_1))$, similar to (\ref{lem2_5}) and (\ref{lem2_6}) we have
 \begin{eqnarray*}\label{lem4_1}
x_1(t_1+1)\geq \min \Big\{\widetilde{x}_1(t_1)+\eta-\frac{2\eta}{K},1\Big\}>r_{1},
\end{eqnarray*}
and
\begin{eqnarray*}\label{lem4_2}
\begin{aligned}
x_i(t_1+1)&\leq \max \Big\{\widetilde{x}_i(t_1)-\eta+\frac{2\eta}{K},0\Big\}=0,~i=2,\ldots,n.
\end{aligned}
\end{eqnarray*}
From these we get
\begin{eqnarray}\label{lem4_3}
&&x_1(t_1+2)\\
&&\geq  \min \Big\{\widetilde{x}_1(t_1+1)+\eta-\frac{2\eta}{K},1\Big\}\nonumber\\
&&=\min \Big\{\omega_1 x_{\rm{ave}}(t_1+1)+(1-\omega_1)x_1(t_1+1)\nonumber\\
&&~~~~~~~~~~+\eta-\frac{2\eta}{K},1\Big\}\nonumber\\
&&= \min \Big\{\Big(1-\omega_1+\frac{\omega_1}{n}\Big)x_1(t_1+1)+\eta-\frac{2\eta}{K},1\Big\},\nonumber
\end{eqnarray}
and for $2\leq i\leq n$ we have
\begin{eqnarray}\label{lem4_4}
\begin{aligned}
x_i(t_1+2) & \leq \max \Big\{\widetilde{x}_i(t_1+1) -\eta+\frac{2\eta}{K},0\Big\}\\
& \leq \max \Big\{\frac{x_1(t_1+1)}{n} -\eta+\frac{2\eta}{K},0\Big\}.
\end{aligned}
\end{eqnarray}
Also, for any $y< \min\{\frac{n\eta}{(n-1)\omega_1}-\varepsilon,n\eta-\varepsilon\}$, by (\ref{lem4_a00}) we have
\begin{eqnarray}\label{lem4_5}
\begin{aligned}
&\Big(1-\omega_1+\frac{\omega_1}{n}\Big)y+\eta-\frac{2\eta}{K}\\
&> y-\frac{(n-1)\omega_1}{n}\Big(\frac{n\eta}{(n-1)\omega_1}-\varepsilon\Big)+\eta-\frac{2\eta}{K}\\
&\geq y+\frac{(n-1)\omega_1\varepsilon}{2n}.
\end{aligned}
\end{eqnarray}
and
\begin{eqnarray}\label{lem4_6}
\begin{aligned}
\frac{y}{n} -\eta+\frac{2\eta}{K}< \frac{n\eta-\varepsilon}{n} -\eta+\frac{2\eta}{K}\leq 0.
\end{aligned}
\end{eqnarray}
Taking (\ref{lem4_5}) into (\ref{lem4_3}), and  (\ref{lem4_6}) into (\ref{lem4_4}), and repeatedly computing $x_i(t_1+3)$,  $x_i(t_1+4), \ldots$,
there must exist a finite time $t_2>t_1$ such that
\begin{eqnarray*}\label{lem4_7}
\begin{aligned}
x_1(t_2)-x_i(t_2)\geq \min\Big\{\frac{n\eta}{(n-1)\omega}-\varepsilon,n\eta-\varepsilon,1\Big\}
\end{aligned}
\end{eqnarray*}
for all $i\in\{2,\ldots,n\}$.
\end{proof}

\section{Proofs of Lemmas \ref{lem3_a}-\ref{lem4_a}}\label{Proof_Lemmas_b}

\begin{proof}[Proof of Lemma \ref{lem3_a}]
Without loss of generality we assume $\omega_1+\omega_2=\min_{i\neq j} (\omega_i+\omega_j)$, and  $\varepsilon$ is  arbitrarily small (though
positive).
Let $$x^*:=\Pi_{[0,1]}\left(\frac{1}{2}+\frac{(\omega_1-\omega_2)(n-1)\eta}{2n}\right).$$
%By (\ref{lem1_0}) and (\ref{m6_new}) we have
%\begin{equation}\label{lem3a_0}
%  x_i(t+1)= \Pi_{[0,1]}\Big(\widetilde{x}_i(t)+ \frac{1-\omega_i}{|\mathcal{N}_i(t)|}\sum\limits_{j\in \mathcal{N}_i(t)\backslash\{i\}}[u_{ji}(t)+b_{ji}(t)]\Big),~~\forall i\in\mathcal{V},t\geq 0.
%\end{equation}
By Lemma \ref{lem1_a} the set $S_{x^*,\frac{\varepsilon}{2}}$ is  finite-time robustly reachable from $[0,1]^n$ under protocol
(\ref{m6_new}). We record the stop time of the set $S_{x^*,\frac{\varepsilon}{2}}$  being reached as $t_1$, which implies
\begin{eqnarray}\label{lem3a_1}
\widetilde{x}_1(t_1)=\cdots=\widetilde{x}_n(t_1)\in \Big[x^*-\frac{\varepsilon}{2},x^*+\frac{\varepsilon}{2}\Big].
\end{eqnarray}
For $i=1,2$, let $\varepsilon_i:=\frac{n\varepsilon}{(1-\omega_i)(n-1)}$, and
choose $\delta_i(t_1)=\frac{\varepsilon_i}{4}$.
Also, choose $u_{j1}(t_1)=\eta-\frac{\varepsilon_1}{4}$ for any $j\in\mathcal{N}_1(t_1)\backslash\{1\}$
and $u_{j2}(t_1)=-\eta+\frac{\varepsilon_2}{4}$ for any $j\in\mathcal{N}_2(t_1)\backslash\{2\}$.
 Combining this with (\ref{m6_new}) and (\ref{lem3a_1}) we have
\begin{eqnarray*}\label{lem3a_2}
&&x_1(t_1+1)\\
&&=\Pi_{[0,1]}\Big(\widetilde{x}_1(t)+\frac{1-\omega_1}{n}\sum\limits_{j\neq 1}(u_{j1}(t)+b_{j1}(t))\Big)\\
&&\geq \min\Big\{\widetilde{x}_1(t_1)+\frac{(1-\omega_1)(n-1)}{n} [\eta-\frac{\varepsilon_1}{2}],1 \Big\}\\
&&\geq \min\Big\{x^*+\frac{(1-\omega_1)(n-1)\eta}{n}-\varepsilon,1 \Big\},
\end{eqnarray*}
and
\begin{eqnarray*}\label{lem3a_3}
&&x_2(t_1+1)\\
&&=\Pi_{[0,1]}\Big(\widetilde{x}_2(t)+\frac{1-\omega_2}{n}\sum\limits_{j\neq 2}(u_{j2}(t)+b_{j2}(t))\Big)\\
&&\leq \max \Big\{\widetilde{x}_2(t_1)+\frac{(1-\omega_2)(n-1)}{n} [-\eta+\frac{\varepsilon_2}{2}],0 \Big\}\\
&&\leq \max \Big\{x^*-\frac{(1-\omega_2)(n-1)\eta}{n} +\varepsilon,0 \Big\}.
\end{eqnarray*}
Thus we get
\begin{eqnarray}\label{lem3a_4}
&&x_1(t_1+1)-x_2(t_1+1)\nonumber\\
&&\geq \min \Big\{\frac{(2-\omega_1-\omega_2)(n-1)\eta}{n}-2\varepsilon,1 \Big\}\nonumber\\
&&= \min \Big\{a_{12}-2\varepsilon,1 \Big\}=c_{\varepsilon}.
\end{eqnarray}
\end{proof}

\begin{proof}[Proof of Lemma \ref{lem4_a}]
We first show that $E_{c_{12}-\varepsilon}$ is finite-time robustly reachable from $[0,1]^n$.
By a similar method to the proof of Lemma \ref{lem2}, for any real number $x^*\in[0,1]$,
 we can find
a finite time $t_1$ such that
\begin{eqnarray}\label{lem4a_00}
\begin{aligned}
&\widetilde{x}_1(t_1)=\widetilde{x}_2(t_1)=\cdots=\widetilde{x}_n(t_1)\\
&~~\in \Big[x^*-\frac{\eta}{K}, x^*+\frac{\eta}{K}\Big],
\end{aligned}
\end{eqnarray}
where $K>0$ is a constant large enough.

At the time $t_1$, we choose $\delta_1(t_1)=\delta_2(t_1)=\frac{\eta}{K}$, and $\delta_3(t_1)=\cdots=\delta_n(t_1)=\frac{\eta}{MK}$ with $M$ be a large integer.
 Also, for $i\in\mathcal{V}$ and $j\in\mathcal{N}_i(t_1)\setminus\{i\}$ we choose
\begin{eqnarray}\label{lem4a_01}
u_{ji}(t_1)=\left\{%
\begin{array}{ll}
\eta-\frac{2\eta}{K} & \mbox{if}~i=1\\
-\eta+\frac{2\eta}{K} & \mbox{if}~i=2\\
0 &\mbox{if}~3\leq i\leq n
\end{array}%
\right..
\end{eqnarray}
By (\ref{m6_new}), (\ref{lem4a_00}) and (\ref{lem4a_01}) we have
 \begin{eqnarray}\label{lem4a_02}
 \left\{
 \begin{aligned}
x_1(t_1+1)& \geq \min \Big\{\widetilde{x}_1(t_1)\\
&+\frac{(n-1)(1-\omega_1)}{n}\Big(\eta-\frac{3\eta}{K}\Big),1\Big\},\\
x_1(t_1+1)& \leq \min \Big\{\widetilde{x}_1(t_1)\\
&+\frac{(n-1)(1-\omega_1)}{n}\Big(\eta-\frac{\eta}{K}\Big),1\Big\},\\
x_2(t_1+1)& \geq \max \Big\{\widetilde{x}_1(t_1)\\
&-\frac{(n-1)(1-\omega_2)}{n}\Big(\eta-\frac{\eta}{K}\Big),0\Big\},\\
x_2(t_1+1)& \leq \max \Big\{\widetilde{x}_1(t_1)\\
&-\frac{(n-1)(1-\omega_2)}{n}\Big(\eta-\frac{3\eta}{K}\Big),0\Big\},\\
x_i(t_1+1)& \geq \min \Big\{\widetilde{x}_1(t_1)\\
&-\frac{(n-1)(1-\omega_i)\eta}{nMK} ,0\Big\}, ~3\leq i\leq n,\\
x_i(t_1+1)& \leq \max \Big\{\widetilde{x}_1(t_1)\\
&+\frac{(n-1)(1-\omega_i)\eta}{nMK},1\Big\}, ~3\leq i\leq n.
\end{aligned}
\right.
\end{eqnarray}
If $a_{12}=\frac{(n-1)(2-\omega_1-\omega_2)\eta}{n}\geq 1$,
similar to (\ref{lem3a_4})  we can choose suitable $x^*$ and large $K$ such that
\begin{eqnarray*}\label{lem4a_02_1}
\begin{aligned}
x_1(t_1+1)-x_2(t_1+1)&\geq \min \big\{a_{12}-\varepsilon,1 \big\}\\
&\geq 1-\varepsilon\geq c_{12}-\varepsilon,
\end{aligned}
\end{eqnarray*}
which indicates that $E_{c_{12}-\varepsilon}$ is robustly reached at time $t_1+1$.
Thus, we just need to consider the case
of  $a_{12}<1$. In this case we can choose suitable $x^*$ and large $K$ such that
\begin{eqnarray}\label{lem4a_03}
 \begin{aligned}
\widetilde{x}_1(t_1)+a_1<1 \mbox{~~and~~} \widetilde{x}_1(t_1)-a_2>0.
\end{aligned}
\end{eqnarray}
Here we recall that $a_i=\frac{(n-1)(1-\omega_i)\eta}{n}$. Also, we can get
\begin{eqnarray}\label{lem4a_04}
 \begin{aligned}
x_{\mbox{ave}}(t_1+1) \approx \widetilde{x}_1(t_1)+ \frac{(n-1)(\omega_2-\omega_1)\eta}{n^2},
\end{aligned}
\end{eqnarray}
where  $A\approx B$ indicates that $\lim_{K\rightarrow\infty}(A-B)=0$ in this proof.

At the time $t_1+1$, we choose $\delta_1(t_1+1)=\delta_2(t_1+1)=\frac{\eta}{K}$, and
$u_{j1}(t_1+1)=\eta-\frac{\eta}{K}$ for $j\in\mathcal{N}_1(t_1+1)\setminus\{1\}$, while $u_{j2}(t_1+1)=-\eta+\frac{\eta}{K}$ for $j\in\mathcal{N}_2(t_1+1)\setminus\{2\}$.

If $r_1<a_1$, by (\ref{lem4a_02}) we can get $\mathcal{N}_1(t_1+1)=\{1\}$ for large $K$, so by (\ref{m6_new})
\begin{eqnarray}\label{lem4a_1}
\begin{aligned}
&x_1(t_1+2)\\
&=\omega_1 x_{\rm{ave}}(t_1+1)+ (1-\omega_1)x_1(t_1+1)\\
&\approx  \widetilde{x}_1(t_1)+\frac{(n-1)\eta}{n}\Big[(1-\omega_1)^2+\frac{\omega_1(\omega_2-\omega_1)}{n}\Big].
\end{aligned}
\end{eqnarray}

If $r_1\in [a_1,a_{12})$, by (\ref{lem4a_02}) we can get $\mathcal{N}_1(t_1+1)=\{1,3,\ldots,n\}$ for large $K$ and $M$, so by (\ref{m6_new})
\begin{eqnarray}\label{lem4a_2}
\begin{aligned}
&x_1(t_1+2)\\
&\approx \min\Big\{\omega_1 x_{\rm{ave}}(t_1+1)+ \frac{1-\omega_1}{n-1}[(n-2)\widetilde{x}_1(t_1)\\
&~~~~~~~~~~+x_1(t_1+1)+(n-2)\eta],1\Big\}\\
&\approx \min\Big\{\widetilde{x}_1(t_1)+\frac{\omega_1(n-1)(\omega_2-\omega_1)\eta}{n^2}\\
&~~~~~~~~~~+(1-\omega_1)\Big[\frac{(1-\omega_1)\eta}{n}+\frac{(n-2)\eta}{n-1}\Big],1\Big\}.
\end{aligned}
\end{eqnarray}

If $r_1\geq a_{12}$, by (\ref{lem4a_02}) we can get $\mathcal{N}_1(t_1+1)=\{1,2,\ldots,n\}$ for large $K$ and $M$, so by   (\ref{m6_new})
and (\ref{lem4a_04}),
\begin{eqnarray}\label{lem4a_3}
&&x_1(t_1+2)\\
&&\approx \min\Big\{\omega_1 x_{\rm{ave}}(t_1+1)\nonumber\\
&&~~~~+ (1-\omega_1)[x_{\rm{ave}}(t_1+1)+\frac{(n-1)\eta}{n}],1\Big\}\nonumber\\
&&\approx \min\Big\{\widetilde{x}_1(t_1)+\frac{(n-1)\eta}{n}\Big(1-\omega_1+\frac{\omega_2-\omega_1}{n}\Big),1\Big\}.\nonumber
\end{eqnarray}

From (\ref{lem4a_1}), (\ref{lem4a_2}), (\ref{lem4a_3}) and (\ref{h_00})  we get
\begin{eqnarray}\label{lem4a_4}
\begin{aligned}
x_1(t_1+2)\approx \Pi_{[0,1]} (\widetilde{x}_1(t_1)+h_{12}).
\end{aligned}
\end{eqnarray}
By a similar discussion we can get
\begin{eqnarray}\label{lem4a_5}
\begin{aligned}
x_2(t_1+2)\approx \Pi_{[0,1]} (\widetilde{x}_1(t_1)-h_{21}).
\end{aligned}
\end{eqnarray}

Let $A\preceq B$ denote $\lim_{K\rightarrow\infty}(A-B) \leq 0$.
If $a_1\leq h_{12}$ and $a_2\leq h_{21}$, which means $x_1(t_1+1)\preceq x_1(t_1+2)$ and $x_2(t_1+2)\preceq x_2(t_1+1)$, then
we choose $x^*$ to be $h_{21}$ or $1-h_{12}$ and get $x_1(t_1+2)-x_2(t_1+2)\approx \min\{h_{12}+h_{21},1\}$, which indicates
 $E_{c_{12}-\varepsilon}$ is robustly reached at time $t_1+2$.

If $a_1> h_{12}$ and $a_2\leq h_{21}$,
we choose $x^*=1-a_1$ which means $x_1(t_1+1)\approx 1$,
 and so $x_1(t_1+2)-x_2(t_1+2)\approx \min\{h_{12}+h_{21},1-(a_1-h_{12})\}$, which indicates
 $E_{c_{12}-\varepsilon}$ is also robustly reached at time $t_1+2$.

If $a_1\leq h_{12}$ and $a_2>h_{21}$,
we choose $x^*=a_2$ and get $x_1(t_1+2)-x_2(t_1+2)\approx \min\{h_{12}+h_{21},1-(a_2-h_{21})\}$,
so $E_{c_{12}-\varepsilon}$ is also robustly reached at time $t_1+2$.

 If $a_1> h_{12}$ and $a_2>h_{21}$, we have
$h_{12}+h_{21}<x_1(t_1+1)-x_2(t_1+1)$, so  $E_{c_{12}-\varepsilon}$ is robustly reached at time $t_1+1$.

Given the discussion above,  $E_{c_{12}-\varepsilon}$ is finite-time robustly reachable from $[0,1]^n$.

For any $i\neq j$, by a similar method we have $E_{c_{ij}-\varepsilon}$ is finite-time robustly reachable from $[0,1]^n$,
so  $E_{c_\varepsilon}$ is finite-time robustly reachable from $[0,1]^n$.
\end{proof}

\section{Proofs of Lemmas \ref{lem1c}, \ref{lem2c}, and \ref{lem3c}}\label{Proof_Lemmas_c}

\begin{proof}[Proof of Lemma \ref{lem1c}]
Set $x_{\max}(t)$ and $x_{\min}(t)$ to be the maximal and minimum opinions at time $t$ respectively.
Let $K>0$ be a large constant and set $\delta_i(t)=\frac{\eta}{K}$ for $i\in\mathcal{V}$ and $t\geq 0$.
Next we try to find a control algorithm such that
\begin{eqnarray}\label{lem1c_1}
\begin{aligned}
&x_{\max}(t_1)-x_{\min}(t_1)\\
&<\max\Big\{ x_{\max}(0)-x_{\min}(0)- \frac{\eta}{2}+\frac{2\eta}{K},\\
&~~~~~~~~~~~x_{\max}(0)-x_{\min}(0)-r_1,\frac{2\eta}{K}\Big\},
\end{aligned}
\end{eqnarray}
where $t_1$ is a finite time. Assume $x_{\max}(0)-x_{\min}(0)>\frac{2\eta}{K}$.
Because for any initial opinions, there must exist two agents whose distance is not bigger than $\frac{1}{n-1}$, which means they are not isolated,
we prove (\ref{lem1c_1}) for the following three cases respectively:\\
Case I: If the agent with the minimum opinion is not isolated, for all $i\in\mathcal{V},j\in\mathcal{N}_i(0)\backslash\{i\}$ we choose
\begin{eqnarray}\label{lem1c_2}
u_{ji}(0)=\min\Big\{\eta,\frac{[x_{\max}(0)-\widetilde{x}_i(0)]|\mathcal{N}_i(0)|}{|\mathcal{N}_i(0)|-1}\Big\}-\frac{\eta}{K}.
\end{eqnarray}
By (\ref{m4_new}) and the fact that $x_{\min}(t)\leq \widetilde{x}_i(t)\leq x_{\max}(t)$,
we can get for any $i\in\mathcal{V}$ satisfying $|\mathcal{N}_i(0)|\geq 2$,
\begin{eqnarray*}\label{lem1c_4}
\begin{aligned}
x_{i}(1)& \geq \min\Big\{x_{\min}(0)+\frac{|\mathcal{N}_i(0)|-1}{|\mathcal{N}_i(0)|}\Big[\eta-\frac{2\eta}{K}\Big],\\
&~~~~~~~~~~~x_{\max}(0)-\frac{|\mathcal{N}_i(0)|-1}{|\mathcal{N}_i(0)|}\frac{2\eta}{K}\Big\}\\
&> \min\Big\{x_{\min}(0)+\frac{\eta}{2}-\frac{2\eta}{K},
x_{\max}(0)-\frac{2\eta}{K}\Big\}.
\end{aligned}
\end{eqnarray*}
Also, $x_i(1)\leq x_{\max}(0)$ for all $i\in\mathcal{V}$, and if $|\mathcal{N}_i(0)|=1$ then $x_i(0)>x_{\min}(0)+r_1$,
so (\ref{lem1c_1}) holds when $t_1=1$. \\
Case II: If the agent with the maximal opinion is not isolated, for all $i\in\mathcal{V},j\in\mathcal{N}_i(0)\backslash\{i\}$ we choose
\begin{eqnarray}\label{lem1c_5}
u_{ji}(0)=\max\Big\{-\eta,\frac{[x_{\min}(0)-\widetilde{x}_i(0)]|\mathcal{N}_i(0)|}{|\mathcal{N}_i(0)|-1}\Big\}+\frac{\eta}{K}.
\end{eqnarray}
Similar to Case I we get (\ref{lem1c_1}) holds when $t_1=1$. \\
Case III: If the agents with the minimum and maximal opinions are all isolated, for all $i\in\mathcal{V},j\in\mathcal{N}_i(0)\backslash\{i\}$ we choose
\begin{eqnarray}\label{lem1c_7}
u_{ji}(t)=\min\Big\{\eta,\frac{[x_{\max}(t)-\widetilde{x}_i(t)]|\mathcal{N}_i(t)|}{|\mathcal{N}_i(t)|-1}\Big\}-\frac{\eta}{K},
\end{eqnarray}
until the agent with the maximal opinion is not isolated. Let $y(t)$ be the minimal value of the non-isolated agents' opinions.
Under (\ref{lem1c_7}) we have
\begin{eqnarray*}\label{lem1c_8}
&&\min\Big\{y(t)+\frac{\eta}{2}-\frac{2\eta}{K},x_{\max}(t)-\frac{2\eta}{K}\Big\}\\
&&~ \leq y(t+1) \leq x_{\max}(t),~~~\forall i\in\mathcal{V},
\end{eqnarray*}
so there exists a finite time $t_0$ such that the agent with the maximal opinion is not isolated at time $t'$.
With the same method as Case II we can get (\ref{lem1c_1}) holds when $t_1=t_0+1$.

Repeatedly using (\ref{lem1c_1}) we get that there exists a finite time $t'$ such that $x_{\max}(t')-x_{\min}(t')\leq \frac{2\eta}{K}$.
Finally we design a control algorithm which moves $(x_{\min}(t')+x_{\max}(t'))/2$ to $z^*$ while $x_{\max}(t)-x_{\min}(t)$ keeps not bigger than
$\frac{2\eta}{K}$. Set $x'(t):=x_{\min}(t)+x_{\max}(t))/2$.
For any $t\geq t'$, $i\in\mathcal{V}$, and $j\in\mathcal{N}_i(t)\backslash\{i\}$  we choose
\begin{multline*}\label{lem1c_9}
u_{ji}(t)=\\
\left\{%
\begin{array}{ll}
\frac{[x'(t)-\widetilde{x}_i(t)]n}{n-1}-\eta+\frac{2\eta}{K} ~ \mbox{if}~x'(t)> z^*+\frac{n-1}{n}(\eta-\frac{2\eta}{K})\\
\frac{[z^*-\widetilde{x}_i(t)]n}{n-1} ~~~~~~~~~~~~~~~ \mbox{if}~\widetilde{x}_i(t)\in[z^*-\frac{n-1}{n}(\eta-\frac{2\eta}{K}),\\
~~~~~~~~~~~~~~~~~~~~~~~~~~~~~~~~~~~~~~~~z^*+\frac{n-1}{n}(\eta-\frac{2\eta}{K})]\\
\frac{[x'(t)-\widetilde{x}_i(t)]n}{n-1}+\eta-\frac{2\eta}{K} ~ \mbox{if}~x'(t)<z^*-\frac{n-1}{n}(\eta-\frac{2\eta}{K})
\end{array}%
\right..
\end{multline*}
By this and (\ref{m4_new}) we have $x'(t+1)\leq x'(t)-\frac{n-1}{n}(\eta-\frac{3\eta}{K})$ if $x'(t)> z^*+\frac{n-1}{n}(\eta-\frac{2\eta}{K})$,
and $x'(t+1)\geq x'(t)+\frac{n-1}{n}(\eta-\frac{3\eta}{K})$ if $x'(t)< z^*-\frac{n-1}{n}(\eta-\frac{2\eta}{K})$.
Then, there exists a finite time $t^*$ such that
\begin{eqnarray*}\label{lem1c_10}
\begin{aligned}
&x_i(t^*)\\
&=\widetilde{x}_i(t^*-1)+\frac{1}{n} \sum_{j\neq i}\Big(\frac{[z^*-\widetilde{x}_i(t^*-1)]n}{n-1}+b_{ji}(t^*-1)\Big)\\
&=z^*+\frac{1}{n}\sum_{j\neq i}b_{ji}(t^*-1),
\end{aligned}
\end{eqnarray*}
which indicates $|x_i(t^*)-z^*|<\frac{\eta}{K}$ for all $i\in\mathcal{V}$ by $|b_{ji}(t)|\leq \frac{\eta}{K}$. Thus,  $S_{z^*,\alpha^*}$ is finite-time robustly reachable from $[0,1]^n$ if we let $K\geq \lceil\eta/\alpha^*\rceil$.
\end{proof}

\begin{proof}[Proof of Lemma \ref{lem2c}]
Let $K>0$ be a large constant.
By Lemma \ref{lem1c},  the set $S_{\frac{1}{2},\frac{r_{1}}{K}}$ is  finite-time robustly reachable from $[0,1]^n$ under protocol
(\ref{m4_new}). We record the stop time of the set $S_{\frac{1}{2},\frac{r_{1}}{K}}$ being reached as $t_1$, which means
\begin{eqnarray}\label{lem2c_1}
x_i(t_1)\in \Big[\frac{1}{2}-\frac{r_{1}}{K}, \frac{1}{2}+\frac{r_{1}}{K}\Big],~~~~\forall i\in\mathcal{V}.
\end{eqnarray}
Then $|\mathcal{N}_i(t_1)|=n$ and $\widetilde{x}_i(t_1)=\sum_{j=1}^n \frac{x_j(t_1)}{n}$ for all $i\in\mathcal{V}$. Choose $\delta_i(t_1)=\frac{\eta}{K}$, and
\begin{eqnarray}\label{lem2c_2}
u_{ji}(t_1)=\left\{%
\begin{array}{ll}
-\eta+\frac{\eta}{K} & \mbox{if}~i=1, j\in\mathcal{N}_1(t)\backslash\{1\}\\
\eta-\frac{\eta}{K} & \mbox{if}~2\leq i\leq n, j\in\mathcal{N}_i(t)\backslash\{i\}
\end{array}%
\right.,
\end{eqnarray}
then  by (\ref{m4_new})  we have
\begin{eqnarray*}\label{lem2c_3}
x_1(t_1+1)\in\Big[\widetilde{x}_1(t_1)-\frac{n-1}{n}\eta, \widetilde{x}_1(t_1)-\frac{n-1}{n}\Big(\eta-\frac{\eta}{K}\Big)\Big],
\end{eqnarray*}
and
\begin{eqnarray*}\label{lem2c_4}
x_i(t_1+1)\in\Big[\widetilde{x}_i(t_1)+\frac{n-1}{n}\Big(\eta-\frac{\eta}{K}\Big),\widetilde{x}_i(t_1)+\frac{n-1}{n}\eta\Big]
\end{eqnarray*}
for all $i \in \{2,\ldots, n\}$.
Thus, $E_{\varepsilon}'$ is robustly reached at time $t_1+1$.

\end{proof}

\begin{proof}[Proof of Lemma \ref{lem3c}]
Similar to (\ref{lem2c_1}) there exists a time $t_1$ such that
\begin{eqnarray*}\label{lem3c_1}
x_i(t_1)\in \Big[\frac{r_1}{2}-\frac{r_{1}}{K}, \frac{r_1}{2}+\frac{r_{1}}{K}\Big],~~~~\forall i\in\mathcal{V},
\end{eqnarray*}
where $K>0$ is a large constant.
For all $t\geq t_1$ and $i\in\mathcal{V}$ we choose $\delta_i(t_1)=\frac{\eta}{K}$, and
\begin{eqnarray}\label{lem3c_2}
u_{ji}(t)=\left\{%
\begin{array}{ll}
-\eta+\frac{\eta}{K} & \mbox{if}~i=1, j\in\mathcal{N}_1(t)\backslash\{1\}\\
\eta-\frac{\eta}{K} & \mbox{if}~2\leq i\leq n, j\in\mathcal{N}_i(t)\backslash\{i\}
\end{array}%
\right.,
\end{eqnarray}
then
\begin{eqnarray*}\label{lem3c_3}
\widetilde{x}_i(t_1)=\sum_{j=1}^n \frac{x_j(t_1)}{n}\in \Big[\frac{r_1}{2}-\frac{r_{1}}{K}, \frac{r_1}{2}+\frac{r_{1}}{K}\Big],~~~~\forall i\in\mathcal{V}.
\end{eqnarray*}
Also, by (\ref{m4_new}) and the fact that $\eta>\frac{nr_1}{2(n-1)}$  we get
\begin{eqnarray*}\label{lem3c_3}
\begin{aligned}
x_1(t_1+1)& \leq \Pi_{[0,1]}\Big(\widetilde{x}_1(t_1)-\frac{n-1}{n}\Big(\eta-\frac{\eta}{K}\Big)\Big)\\
&\leq  \Pi_{[0,1]}\Big(\frac{r_1}{2}+\frac{r_1}{K}-\frac{n-1}{n}\Big(\eta-\frac{\eta}{K}\Big)\Big)=0,
\end{aligned}
\end{eqnarray*}
and for all $i\in\{2,\ldots,n\}$,
\begin{eqnarray*}\label{lem3c_4}
\begin{aligned}
x_i(t_1+1)\in\Big[\widetilde{x}_i(t_1)+\frac{n-1}{n}\Big(\eta-\frac{\eta}{K}\Big),\widetilde{x}_i(t_1)+\frac{n-1}{n}\eta\Big]
\end{aligned}
\end{eqnarray*}
and
\begin{eqnarray*}\label{lem3c_4a}
\begin{aligned}
&\Big[\widetilde{x}_i(t_1)+\frac{n-1}{n}\Big(\eta-\frac{\eta}{K}\Big),\widetilde{x}_i(t_1)+\frac{n-1}{n}\eta\Big]\\
&\subset \Big[\frac{r_1}{2}-\frac{r_1}{K}+\frac{n-1}{n}\Big(\eta-\frac{\eta}{K}\Big),\frac{r_1}{2}+\frac{r_1}{K}+\frac{n-1}{n}\eta\Big].
\end{aligned}
\end{eqnarray*}
These yield that $x_i(t_1+1)-x_1(t_1+1)>r_1$ for all $i=2,\ldots,n$, and $x_2(t_1+1),\cdots,x_n(t_1+1)$ are neighbors to each other.
Using (\ref{lem3c_2}) repeatedly, there exists a finite time $t_2>t_1$ such that $x_1(t_2)=0$ and $x_2(t_2)=\cdots=x_n(t_2)=1$.
\end{proof}

\begin{IEEEbiography}[{\includegraphics[scale=0.1]{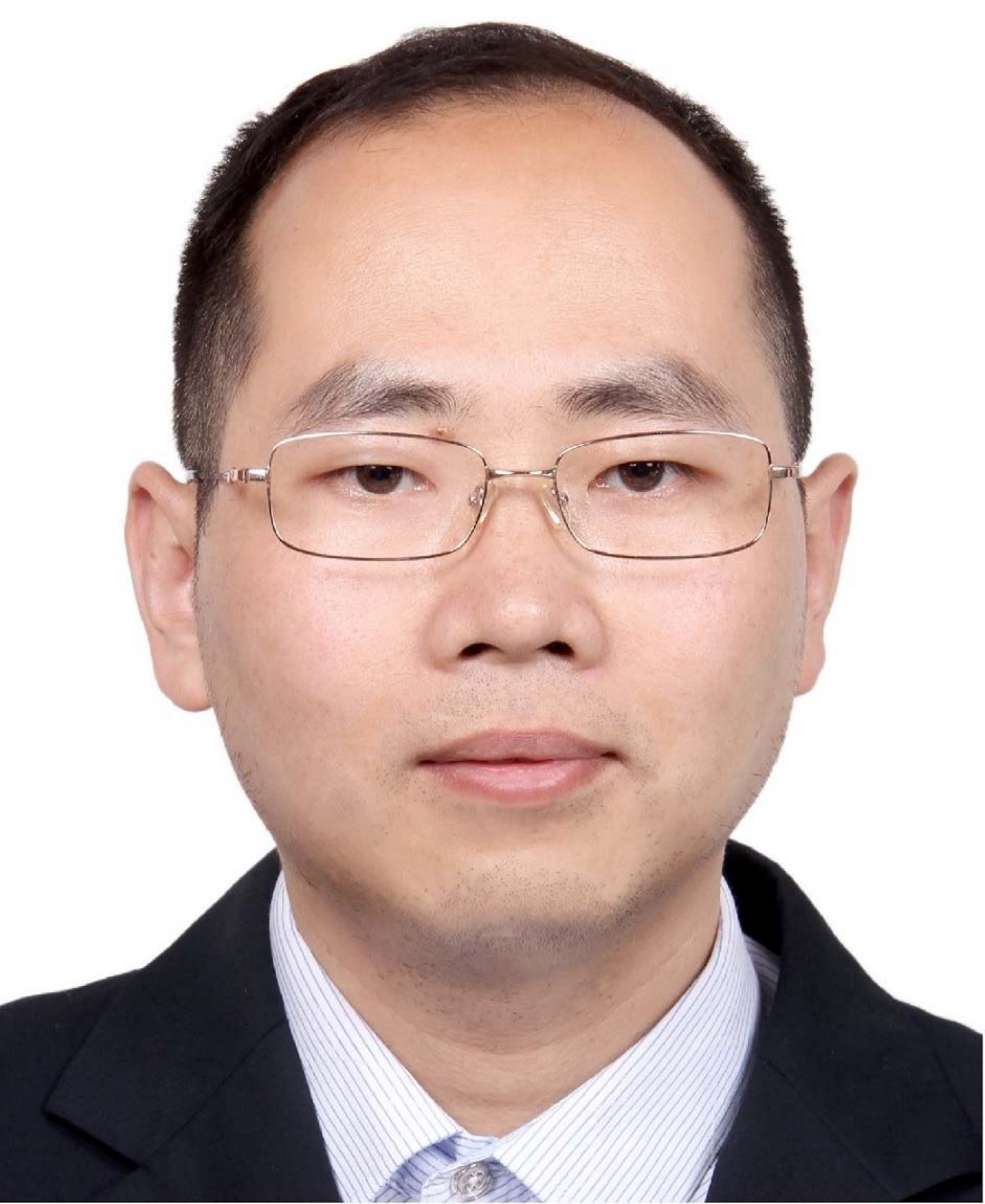}}]{Ge Chen} (M'13-SM'18)
 received the B.Sc. degree in mathematics from the University of Science and Technology of China in 2004, and the Ph.D. degree in mathematics from the University of Chinese Academy of Sciences, China, in 2009.

He jointed the  National Center for Mathematics and Interdisciplinary Sciences, Academy of Mathematics and Systems Science, Chinese Academy of Sciences in 2011, and is currently  an Associate Professor. His current research interest is the collective behavior of multi-agent systems.

Dr. Chen received the First Prize of the Application Award from the Operations Research Society of China (2010). One of his papers was selected as a SIGEST paper by the \emph{SIAM Review} (2014). He was also a finalist for the OR in Development prize from the International Federation of Operations Research Societies (2011).
\end{IEEEbiography}

\begin{IEEEbiography}[{\includegraphics[scale=0.15]{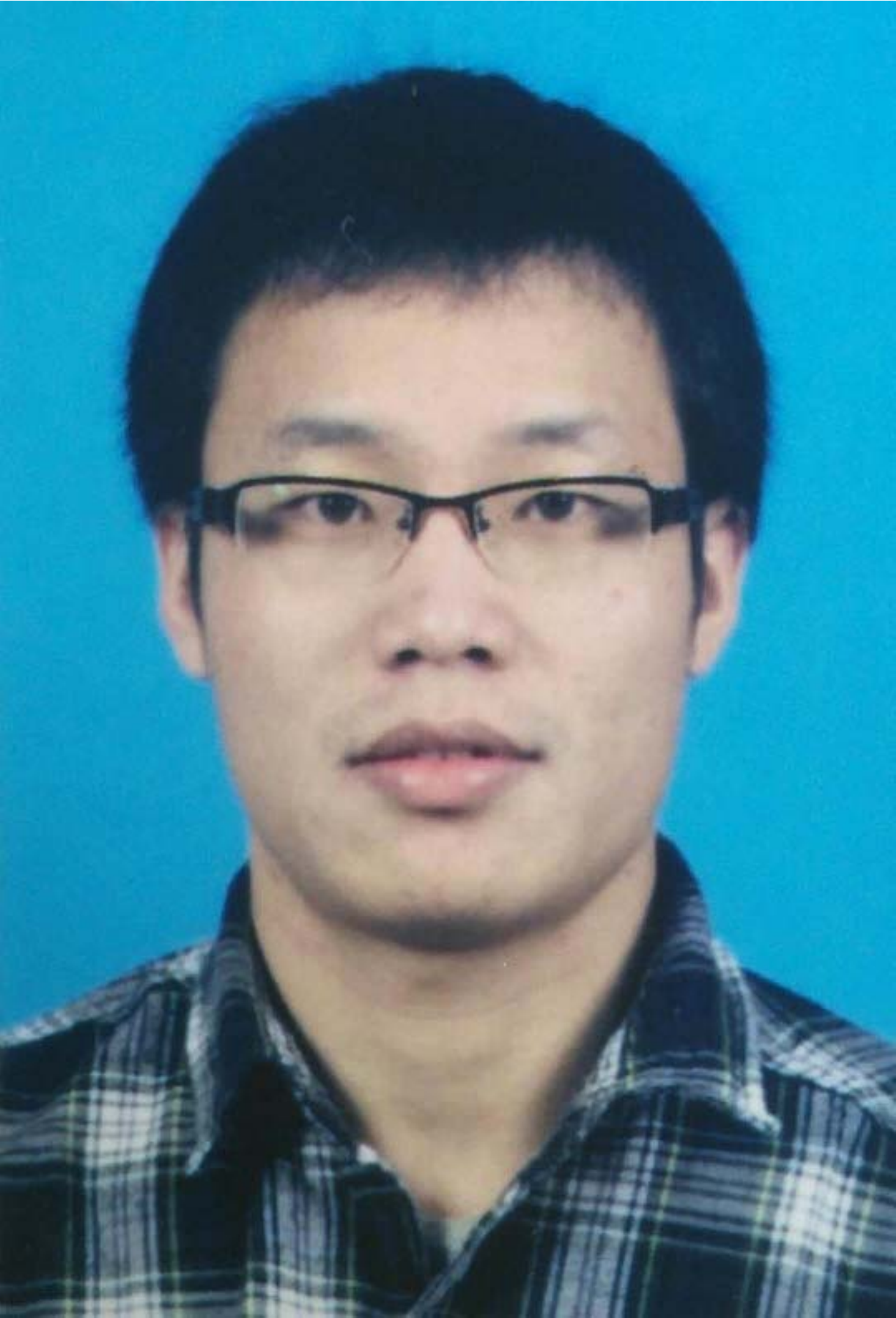}}]{Wei Su}
 received his B.S. degree from Beijing Jiaotong University (BJTU), China, in 2008, and his M.S. and PH.D. degrees form University of Chinese Academy of Sciences, China, in 2011 and 2015 respectively.
He is currently an assistant professor with School of Automation and Electrical Engineering, University of Science and Technology Beijing. His current research interests include social networks, machine learning, multi-agent systems and robustness of complex systems.
\end{IEEEbiography}

\begin{IEEEbiography}[{\includegraphics[scale=0.08]{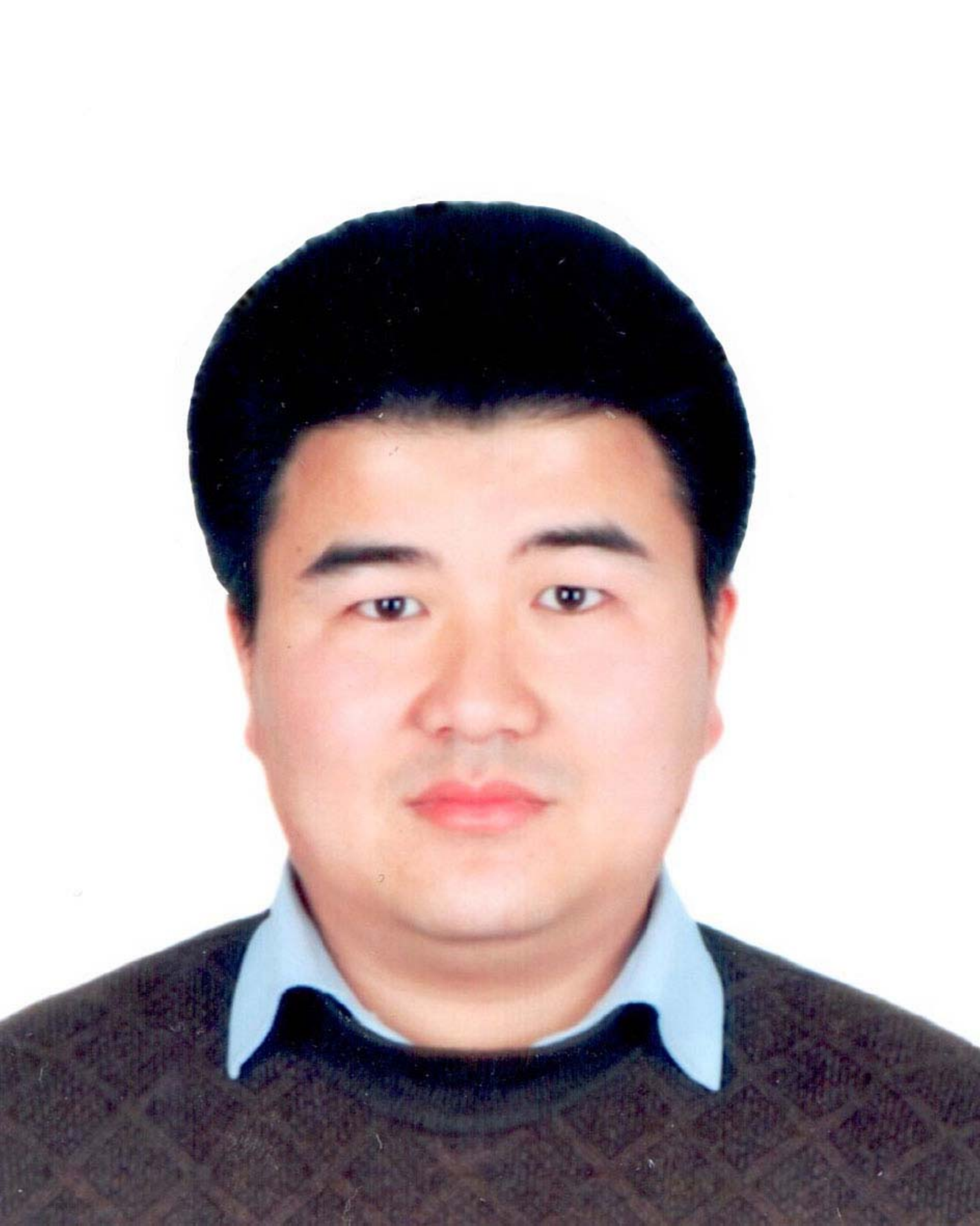}}]{Song-Yuan Ding}
 received his BSc in Chemical Physics at the University of Science and Technology of China in 2005 and his PhD in Chemistry under the supervision of Prof. Zhong-Qun Tian at Xiamen University in 2012. He is a Research Fellow in the Collaborative Innovation Center of Chemistry for Energy Materials(iChEM) at Xiamen University.Currently, his research interests include the control theory of catalyzed molecular assembly, surface-enhanced Raman spectroscopy for general materials, AFM-based infrared and Raman nanospectroscopy and imaging, and ab initial interfacial electrochemistry.
\end{IEEEbiography}

\begin{IEEEbiography}[{\includegraphics[scale=0.9]{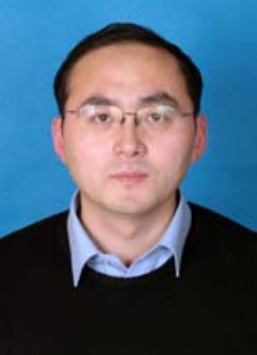}}]{Yiguang Hong} (M'99-SM'02-F'17)
 received his B.S. and M.S. degrees from Peking University, China, and the Ph.D. degree from the Chinese Academy of Sciences (CAS), China. He is currently a Professor in Academy of Mathematics and Systems Science, CAS, and serves as the Director of Key Lab of Systems and Control, CAS and the Director of the Information Technology Division, National Center for Mathematics and Interdisciplinary Sciences, CAS.  His current research interests include nonlinear control, multi-agent systems, distributed optimization/game, machine learning, and social networks.

Prof. Hong serves as Editor-in-Chief of Control Theory and Technology and Deputy Editor-in-Chief of Acta Automatica Sinca.   He also serves or served as Associate Editors for many journals, including the IEEE Transactions on Automatic Control, IEEE Transactions on Control of Network Systems, and IEEE Control Systems Magazine. He is a recipient of the Guang Zhaozhi Award at the Chinese Control Conference, Young Author Prize of the IFAC World Congress, Young Scientist Award of CAS, the Youth Award for Science and Technology of China, and the National Natural Science Prize of China.  He is a Fellow of IEEE.
\end{IEEEbiography}

\end{document}